\newcommand{\N}{\ensuremath{\mathbb N}}
\newcommand{\F}{\ensuremath{\mathbf F}}
\newcommand{\Z}{\ensuremath{\mathbb Z}}
\newcommand{\R}{\ensuremath{\mathbb R}}
\newcommand{\Q}{\ensuremath{\mathbb Q}}
\newcommand{\C}{\ensuremath{\mathbb C}}
\newcommand{\ri}{\ensuremath{\mathcal{O_{\mathbf{F}}}}} 
\newcommand{\ec}{\textrm}
\newcommand{\congru}{\equiv}
\newcommand{\barre}{\overline }
\newcommand\m{\mu}
\newcommand{\vast}{\bBigg@{4}}
\newcommand{\Vast}{\bBigg@{5}}
\theoremstyle{plain}		
	\newtheorem{theorem}{Theorem}[section]
	\newtheorem{prop}[theorem]{Proposition}
	\newtheorem{cor}[theorem]{Corollary}
     \newtheorem{lemma}[theorem]{Lemma}
	\newtheorem{definition}[theorem]{Definition}
	\newtheorem{conj}[theorem]{Conjecture}
\theoremstyle{remark}		
	\newtheorem*{remark}{Remark}
	\newtheorem*{remarks}{Remarks}
\newtheoremstyle{dotless}{}{}{\itshape}{}{\bfseries}{}{ }{}
  \theoremstyle{dotless}
  \newtheorem*{thm}{}
\newtheorem*{theorem-non}{}
\numberwithin{equation}{section}
\begin{document}
\title{On Patterson's Conjecture: Sums of Quartic Exponential Sums}
\author{P. Edward Herman} 
\address{University of Chicago, Dept. of Mathematics,
5734 S. University Avenue,
Chicago, Illinois 60637}
\email{peherman@math.uchicago.edu}
\date{\today}

\begin{abstract}
     We give more evidence for Patterson's conjecture on sums of exponential sums by getting an asymptotic for a sum of quartic exponential sums over $\Q$ adjoined the eighth roots of unity. Previously, the strongest evidence of Patterson's conjecture over a number field is the paper of Livn\'{e} and Patterson \cite{LP} on sums of cubic exponential sums over $\Q[\omega], \omega^3=1.$ 
     
     The key ideas in getting such an asymptotic are a Kuznetsov-like trace formula for metaplectic forms over a quartic cover of $GL_2,$ and an identity on exponential sums relating Kloosterman sums and quartic exponential sums. To synthesize the spectral theory and the exponential sum identity, there is need for a good amount of analytic number theory.

     An unexpected aspect of the asymptotic of the sums of exponential sums is that there can be a secondary main term additional to the main term which is not predicted in Patterson's original paper \cite{P}.

      \end{abstract}
\subjclass{11D25, 11F70, 11F72, 11L05, 11L07}
\keywords{Exponential sums, Trace formula, Metaplectic forms}
\maketitle
\section{Introduction}

The Riemann Hypothesis, Goldbach's Conjecture, Twin Primes conjecture, and Fermat's Last Theorem are all examples of lucid statements in number theory. Perhaps not as well known, Patterson's conjecture, as posed by S.J. Patterson in \cite{P}, is just as clear in its presentation.

To understand, we will motivate Patterson's conjecture. Given a polynomial $f$ with integral coefficients and a rational prime $p,$ we can ask how large the exponential sum $$\sum_{x(p)} \exp(\frac{2\pi i f(x)}{p})$$ is as $p \to \infty.$ From Weil's result on the Riemann Hypothesis for curves over a finite field, we can conclude that the sum is bounded by $O(\sqrt{p}).$ If we ask the same question for prime power or integral modulus, the ``square-root" bound continues to hold via other techniques, like stationary phase, that are much easier to apply than Weil's bound. We will call this ``square-root" bound the Weil bound whether it is for a prime, prime power, or integral modulus. 

Naturally, the next question to ask is whether or not it is possible to improve on Weil's bound on average. Specifically, fix a large $X>0$ and investigate how large the following sum of sums is: \begin{equation}\label{eq:ply}\sum_{p \leq X} \sum_{x(p)} \exp(\frac{2\pi i f(x)}{p}).\end{equation} Just using Weil's bound here and bounding the $p$-sum trivially (using the prime number theorem) would get the bound $O(\frac{X^{\frac{3}{2}}}{\log X})$ for the sum.  So we ask, can this bound be improved by allowing the exponential sums to ``cancel" with each other as they are summed over the primes less than $X?$ This seems to be a very difficult problem and generally to solve such problems already requires a deep understanding of the analogous problem of sums over positive integers. One can then hope to use a sieve or Vaughan's method to reduce to sums of primes.
Therefore, we try to improve the Weil bound as we sum over positive integers rather than over primes: \begin{equation}\label{eq:plyi}\sum_{c \leq X} \sum_{x(c)} \exp(\frac{2\pi i f(x)}{c}).\end{equation} Again the bound to beat would be $O(X^{3/2})$ for \eqref{eq:plyi}. Two examples of using a sieve on a sum of exponential sums over integers include \cite{HB} and \cite{HBP} which study the distribution of the angle associated to a cubic Gauss sum of prime modulus.  A third example is \cite{DFI} which studies the distribution of roots of a quadratic polynomial with integral coefficients modulo a prime. The sums of exponential sums over primes arise in all three examples by translating the associated distribution problems using Weyl equidistribution. We should also say in all three cases improving on the Weil bound on average was crucial to the resolution to the distribution problems. 

With the motivation now to improve upon Weil's bound on average, we state Patterson's conjecture. 
 \begin{conj}[Patterson's Conjecture] Let  $f \in \Z[x],$ with $\deg f=n>2.$ 
If we let $$S(f,X):=\sum_{c \leq X} \sum_{x(c)} \exp(\frac{2\pi if(x)}{c})$$ then for a constant $K$ depending on $f,$

\begin{equation}\label{eq:pcjj}
 S(f,X) \sim \left\{ \begin{array}{ll}
     K X^{1+2/n}   & \text{if }  f(x)= f(a-x) \text{ for some } a \in \Z; \medskip \\
         		KX^{1+1/n}   & \text{if } \text{ else}.
\end{array} \right.  
\end{equation}

\end{conj}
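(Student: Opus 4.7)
The plan is to attack the conjecture by studying the Dirichlet generating series
\begin{equation*}
Z(s,f):=\sum_{c\geq 1} \frac{G_f(c)}{c^s},\qquad G_f(c):=\sum_{x(c)}\exp\left(\frac{2\pi i f(x)}{c}\right),
\end{equation*}
and reading off the asymptotic for $S(f,X)$ from a Perron/contour integral evaluation. By the Weil bound $Z(s,f)$ converges absolutely in $\Re(s)>3/2$, and the conjecture is equivalent to the assertion that $Z(s,f)$ admits meromorphic continuation past $\Re(s)=1+\alpha(f)$, with a simple pole at $s=1+\alpha(f)$ (where $\alpha(f)=2/n$ in the palindromic case $f(x)=f(a-x)$ and $\alpha(f)=1/n$ otherwise) whose residue we identify with the constant $K$, together with polynomial growth on vertical strips further to the left.

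First I would reduce the analytic problem to the study of $G_f$ on prime powers. By the Chinese Remainder Theorem $G_f$ is a twisted multiplicative function of $c$, so $Z(s,f)$ admits a pseudo-Euler product in which each local factor encodes the $p$-adic stationary-phase expansion of $G_f(p^k)$. For primes $p$ unramified relative to $f$ (i.e.\ not dividing the discriminant of $f'$ nor $n!$), classical Hensel lifting and the Weil bound give $G_f(p^k)=p^{k/2}\cdot(\text{oscillating remainder})$ for $k\geq 2$, reducing everything to the prime case $G_f(p)$. There one exploits the fact that for $p\equiv 1\pmod n$ the $n$-th roots of unity in $\mathbf F_p^\times$ bring multiplicative characters of order dividing $n$ into play, and a Jacobi--Gauss-sum expansion rewrites $G_f(p)$ as a linear combination of products of $n$-th-order Gauss sums $\tau(\chi_n)$ with coefficients determined by the Newton polygon and Galois-theoretic invariants of $f$.

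The crucial analytic input is then the distribution of the phases of the $n$-th-order Gauss sums as $p$ ranges over primes with $p\equiv 1\pmod n$. Patterson's insight, refined in Heath-Brown--Patterson \cite{HBP} and Livn\'e--Patterson \cite{LP}, is that these phases are controlled by Fourier--Whittaker coefficients of the theta function on the $n$-fold metaplectic cover of $GL_2$. To exploit this one passes to the cyclotomic field $\mathbf K=\Q(\zeta_n)$, which contains the $n$-th roots of unity required for the Kubota cocycle to exist, sets up the analogue $S_{\mathbf K}(f,X)$ of our sum, and evaluates it via a Kuznetsov-type trace formula on the $n$-cover of $GL_2(\mathbf K)$---as carried out for $n=3$ in \cite{LP} and for $n=4$ in the present paper. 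Descent from $\mathbf K$ back to $\Q$ is accomplished by averaging the resulting identity over Galois twists, combined with an inclusion--exclusion over ramification at primes dividing $n$ and the discriminant of $f$; the secondary main term flagged in the abstract emerges precisely during this descent, as a contribution from residual Eisenstein series that is invisible in Patterson's original $\Q$-level prediction.

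Two distinct obstacles lie in the way. The first, on which this paper's main work is spent, is establishing the relevant Kuznetsov inversion on an $n$-fold cover and controlling the continuous spectrum, residual Eisenstein contributions, and off-diagonal Kloosterman terms cleanly enough to isolate the theta-function pole; the matching exponential-sum identity converting Kloosterman sums into $n$-th-order Gauss sums is the non-automorphic half of this step and must be proved essentially by hand for each $n$. The second obstacle, which is much more serious, is that for $n\geq 5$ no theta function with the requisite Fourier-coefficient properties is known to exist on the $n$-fold cover of $GL_2$; by the theorems of Kazhdan and Patterson the small metaplectic theta representation migrates to $GL_n$, so beyond the quartic case the strategy must be reconfigured to use Whittaker coefficients of theta on $GL_n$ together with a higher-rank Kuznetsov formula, and the bounds on those exotic coefficients required to beat the Weil bound on average are essentially open. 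For this reason the roadmap above is unconditional only for $n=3$ and, through the present paper, for $n=4$ (over the relevant cyclotomic base), while the full conjecture for $n\geq 5$ remains genuinely conjectural.
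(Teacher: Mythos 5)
The statement you are asked about is a \emph{conjecture}, and the paper does not prove it: Theorem \ref{ftt0} is an analogue over $\Q[\omega_8]$ for a specific family of quartic polynomials, offered as evidence, and the paper explicitly says that the two examples $S(Ax^3,X)$ and $S(Ax^2+Bx+C,X)$ are ``the current extent of what we can prove about Patterson's conjecture over $\Z$.'' Your proposal is likewise not a proof but a strategy outline, and you concede as much in your final paragraph; so there is no complete argument here to certify. The concrete gap is the step you describe as ``descent from $\mathbf K$ back to $\Q$ \ldots by averaging the resulting identity over Galois twists, combined with an inclusion--exclusion over ramification.'' No such descent exists in the literature or in this paper; the passage to $\Q(\zeta_n)$ is forced because the Kubota symbol and the metaplectic theta function only live there, and nobody knows how to transfer the resulting asymptotics back down to sums over rational integers. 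The number-field theorems of Livn\'e--Patterson and of this paper are \emph{analogues} of the conjecture, not intermediate results from which the $\Q$-statement follows.

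A secondary but genuine error: you attribute the unexpected secondary main term to the descent step. In the paper it has nothing to do with descent. It arises already over $\Q[\omega_8]$, from the ``Cross terms'' in the exponential-sum identity of Proposition \ref{c400} --- the decompositions $c=nm$ in which the quadratic Gauss-sum factor and the quartic Kloosterman factor sit at different moduli --- and is evaluated in Theorem \ref{crrm} by running the trace formula at level $4Dm^2$ for each fixed $m$ and summing the residual Eisenstein contributions over $m$. Your outline of the first obstacle (Kuznetsov inversion on the $n$-cover plus a hand-proved Kloosterman-to-Gauss-sum identity) does match the paper's architecture for $n=4$, and your remarks about the Kazhdan--Patterson obstruction for $n\ge 5$ are accurate; but as written the proposal neither proves the conjecture nor correctly locates where the paper's new phenomenon comes from.
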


We note that this conjecture immediately implies an improvement over using the Weil bound and trivially summing over the integral modulus. Patterson conjectures such an asymptotic in \cite{P} with plenty of numerical experimentation and a couple examples which we now mention. In \cite{P3}, Patterson obtained the asymptotic for any $\epsilon>0,$  $$S(Ax^3,X)=K_A X^{\frac{4}{3}}+O(X^{\frac{5}{4}+\epsilon}).$$ Though we restrict to $f$ with $\deg f>2,$ we can also study $S(Ax^2+Bx+C,X)$ with $B \neq 0$ by standard methods using quadratic Gauss sums and quadratic reciprocity to show for $\delta >0,$ $$S(Ax^2+Bx+C,X)=K_{A,B,C}X^{\frac{3}{2}}+O(X^{\frac{3}{2}-\delta}).$$ Both of these examples support Patterson's conjecture. However, this seems to be the current extent of what we can prove about Patterson's conjecture over $\Z,$ so we ask about an analog of Patterson's conjecture over the integers of a number field.

Take the number field to be $\Q[\omega_3],  \omega_3^3=1.$ Let $e(Tr(z))=\exp(2\pi i(z+\overline{z})),$ and define $S(f(x),X)$ accordingly. Livn\'{e} and Patterson \cite{LP} proved that 
\begin{theorem}\label{lpe}
Suppose that $A,B,D,D' \in \Z[\omega_3]$ are such that $D,D'$ $(D,D')=1,$ both are co-prime to $3$, that any prime $(\neq \sqrt{-3})$ which divides $B$ also divides $D',$ that $27\cdot A|B^3,$ and that $\frac{B^3}{27A} \equiv \pm 1(3).$ Let $\chi$ be a Dirichlet character of modulus dividing $D'.$ Then for any $\epsilon >0,$ we have \begin{multline}
\sum_{\substack{\N(c) \leq X, (c,D')=1\\ c\equiv 0(D), c\equiv 1(3)}} S(Ax^3+Bx,X)\chi(c)=\frac{(2\pi)^{1/3}\Gamma(1/3)^2\sqrt{3}}{32\zeta_{\Q[\omega_3]}(2)} \frac{\N(D')}{\sigma(DD')}X^{\frac{4}{3}}+O(X^{\frac{5}{4}+\epsilon})\\ \mbox{ if } \chi=\left(\frac{A}{\cdot}\right)_3 \mbox{ the cubic residue symbol, and }\\ \sum_{\substack{\N(c) \leq X, (c,D')=1\\ c\equiv 0(D), c\equiv 1(3)}} S(Ax^3+Bx,X)\chi(c)=O(X^{\frac{1}{4}+\epsilon}) \mbox{ otherwise.} \mbox{ Here } \sigma(D)=\prod_{p \text{ prime}, p|D} (1+\N(p)). \end{multline}

\end{theorem}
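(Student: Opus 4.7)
The strategy is to convert the sum into a contour integral of a Dirichlet series whose analytic continuation is supplied by Patterson's cubic theta function on the $3$-fold metaplectic cover of $GL_2/\Q[\omega_3]$. First I would insert a smooth weight $h \approx \mathbf{1}_{[0,1]}$ and, by Mellin inversion, write
\begin{equation*}
\sum_c \chi(c)\, S(Ax^3+Bx;c)\, h(\N(c)/X) = \frac{1}{2\pi i}\int_{(\sigma)} \tilde{h}(s)\, Z(s)\, X^{s}\, ds,
\end{equation*}
where the $c$-summation encodes the congruences $(c,D')=1$, $c\equiv 0(D)$, $c\equiv 1(3)$, and
\begin{equation*}
Z(s) := \sum_c \chi(c)\, S(Ax^3+Bx;c)\, \N(c)^{-s}.
\end{equation*}
The entire problem reduces to locating the rightmost pole of $Z(s)$ and controlling $Z(s)$ on a vertical line to the left.

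The second step is an exponential sum identity. For $(c,6A)=1$, the hypothesis $27A \mid B^3$ lets one complete the cube, producing a decomposition of the shape
\begin{equation*}
S(Ax^3+Bx; c) = \sum_{c_1 c_2 = c} \overline{g_3}(1;c_1)\, K(m_1,m_2; c_2),
\end{equation*}
where $g_3(n;c) := \sum_{x(c)} e_c(n x^3)$ is the cubic Gauss sum and $K$ a Kloosterman sum in $\Q[\omega_3]$, with parameters $m_1,m_2$ determined by $A,B$. Executing the $c_2$-sum (whose Dirichlet series is a shifted convolution of $\zeta_{\Q[\omega_3]}$-like factors) shows that the essential content of $Z(s)$ is the twisted cubic Kloosterman--Selberg zeta
\begin{equation*}
Z_3(s,\chi) := \sum_c \frac{g_3(1;c)\,\chi(c)}{\N(c)^s}.
\end{equation*}

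The third step exploits metaplectic theory: $Z_3(s,\chi)$ is, up to elementary Euler factors at primes dividing $DD'$ and $\sqrt{-3}$, a Whittaker coefficient of the residual Eisenstein series on the $3$-cover of $GL_2/\Q[\omega_3]$, namely Patterson's cubic theta function $\theta_3$. By Kubota--Patterson, it extends meromorphically to $\Re(s) > 1$ with rightmost pole at $s=4/3$, and this pole is actually realized precisely when $\chi$ coincides with the cubic residue symbol $\left(\frac{A}{\cdot}\right)_3$ on the relevant progression; otherwise $Z_3(s,\chi)$ is regular at $s=4/3$, explaining the $O(X^{1/4+\varepsilon})$ bound in the second case. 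Computing the residue using the Kubota normalization together with the local factors assembled in step two yields exactly the constant $\frac{(2\pi)^{1/3}\Gamma(1/3)^2 \sqrt{3}}{32\,\zeta_{\Q[\omega_3]}(2)}\cdot \frac{\N(D')}{\sigma(DD')}$.

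Finally, shift the contour from $\Re(s)=3/2+\varepsilon$ past $s=4/3$ to $\Re(s)=5/4+\varepsilon$, picking up $KX^{4/3}$ from the residue and bounding the integral on the new line. \textbf{The main obstacle} is precisely this last estimate: since $\theta_3$ is \emph{not} a cusp form, the standard Rankin--Selberg/convexity machinery does not apply directly, and the required polynomial bound for $Z_3(s,\chi)$ at $\Re(s)=5/4$ essentially amounts to the Heath-Brown--Patterson large-sieve estimate for cubic Gauss sums — a genuinely deep input. Secondary difficulties are verifying the exponential sum identity uniformly in $c$ (in particular at primes above $3$ and primes dividing $B$, where the condition $B^3/(27A)\equiv \pm 1\pmod 3$ is what makes the cubic completion succeed), and propagating the congruence conditions through the metaplectic computation without disturbing the twisted multiplicativity of $g_3$ dictated by cubic reciprocity.
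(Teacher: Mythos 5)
The statement you are trying to prove is Livn\'{e}--Patterson's theorem, which the present paper merely cites; its actual proof lives in \cite{LP}. Your high-level architecture (Mellin inversion / Dirichlet series, meromorphic continuation via the cubic theta function, contour shift past the pole at $s=4/3$) is in the right spirit and is, in the Goldfeld--Sarnak sense, essentially the same thing as the Kuznetsov-trace-formula-with-cusps machinery that Livn\'{e}--Patterson use and that the present paper imitates in the quartic case. However, there is a genuine error at the crux of your argument.

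Your ``second step'' exponential-sum identity is wrong. You claim a Dirichlet-convolution decomposition
$$S(Ax^3+Bx;c) \;=\; \sum_{c_1c_2=c}\overline{g_3}(1;c_1)\,K(m_1,m_2;c_2),$$
which would let you factor $Z(s)$ as $\overline{Z_3}(s)$ times a Kloosterman Dirichlet series. No such multiplicative splitting holds. The actual identity — stated in this paper as \eqref{eq:kbo} and originating in Katz and Duke--Iwaniec — is a single transformation, not a convolution: for $c\equiv 1(3)$,
$$\sum_{x(c)} e\!\left(Tr\!\left(\frac{Ax^3+Bx}{c}\right)\right)=\left(\frac{\overline{A}}{c}\right)_3 \sum_{y(c)^{*}} \left(\frac{y}{c}\right)_3 e\!\left(Tr\!\left(\frac{y-B^3\overline{3^3A y}}{c}\right)\right).$$
The right-hand side is a \emph{cubic-twisted Kloosterman sum} of a single modulus $c$, and the generating Dirichlet series is the cubic Kloosterman--Selberg zeta function, not the cubic Gauss sum series $\sum_c g_3(1;c)\chi(c)\N(c)^{-s}$. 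You are conflating two closely related but distinct objects: the Fourier coefficients of the cubic theta function (which are Gauss sums) and the Kloosterman--Selberg zeta function (which is a bilinear form in those coefficients over the whole spectrum). Your convolution formula already fails for $c=p$ prime — it would assert $S(Ax^3+Bx;p)=K(m_1,m_2;p)+\overline{g_3}(1;p)$, which contradicts the single Kloosterman-sum identity above.

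Two further issues. First, your ``main obstacle'' is mislocated: the error bound $O(X^{5/4+\epsilon})$ in Livn\'{e}--Patterson comes from estimating the non-residual part of the metaplectic spectrum (cusp forms controlled via the Shimura correspondence and bounds toward Ramanujan, plus the continuous spectrum) together with the Bessel-transform analysis on the geometric side; it is not a consequence of the Heath-Brown--Patterson large sieve for cubic Gauss sums, which addresses a different problem. Second, the congruence restrictions on $c$ (the conditions $(c,D')=1$, $c\equiv 0(D)$, $c\equiv 1(3)$) and the appearance of the character $\chi$ are not propagated by a generic Euler-factor bookkeeping as you suggest; in the actual argument they force the use of a Kuznetsov-type formula at a pair of \emph{non-equivalent cusps} of $\Gamma_0(D)\cap\Gamma_1(3)$, chosen so that the $c$-sum on the geometric side runs exactly over the desired arithmetic progression and the level absorbs all primes dividing $B$ (cf.\ the discussion in Section \ref{sec:skk} of this paper). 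Without that device, the ``unramified'' identity does not even apply to all $c$ in the sum.
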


It is not clear how this theorem fits immediately into the above conjecture for sums of exponential sums over $\Z,$ but it definitely is analogous. The incorporation of Dirichlet characters into the number field case generalizes Patterson's conjecture at the same time as solving a technical aspect of studying such sums. We will discuss more this technical need for the Dirichlet character in Section \ref{sec:skk}. 

\subsection{Main Theorem}
We now state the main theorem of the paper. Let us take $\F=\Q[\omega_8]$ the $8$-th cyclotomic field with ring of integers $\ri=\Z[\omega_8].$ We consider exponential sums over $\ri$ where the exponential sum in this case is just $\exp(2\pi i x)$ composed with the trace $Tr$ of $\F.$ Let the four complex embeddings of the field $\F$ be denoted $\eta_1, \overline{\eta_1}, \eta_2, \overline{\eta_2}.$ Let $\N(c),\phi(c)$ denote the absolute norm and Euler-Phi function of the ideal generated by $c \in \ri,$ respectively.


Let $\Psi \in C^{\infty}_0(\R^{+})$ and denote $\hat{\Psi}(s)$ as the Mellin transform over $\R,$ $$\hat{\Psi}(s)=\int_0^\infty \Psi(y)y^{s-1}dy.$$

\begin{theorem}\label{ftt0}
Suppose $A,B,F,D \in \ri=\Z[\omega_8]$ are such that  $D\equiv 1(4),$ that $B=4B'$ and that any prime $p$ dividing $B'$ also divides $D,$ and that  $\frac{B^2}{16A} \in \ri.$ Let $\theta$ be a Dirichlet character modulo $D.$ Suppose also that $A,B$ are squares. Let $\alpha$ be the best bound toward the Ramanujan conjecture for automorphic representations over $\F$ (the current best bound is $\alpha=\frac{7}{64}).$ Then for any $\epsilon > \frac{\alpha}{8},$  there exist a constants $K',K$ depending on $A,B,D$ such that

\begin{equation}
\sum_{\substack{c \in \ri,(c,D)=1\\ c\equiv 1(4)}} \frac{\prod_{j=1}^2\Psi(\frac{\sqrt{X}}{|c^{\eta_j}|^2})}{\N(c)}\theta(c)\sum_{x(c)} e(Tr(\frac{Ax^4+Bx^2+F}{c}))=\end{equation}

$$ \left\{ \begin{array}{ll}   X^{\frac{1}{2}}\hat{\Psi}(-\frac{1}{2})^2K'+ X^{\frac{1}{4}}\hat{\Psi}(-\frac{1}{4})^2K+ O(X^{\epsilon}); & \text{if } \theta\equiv \mathbf{1}(D);\medskip \\ 
X^{\frac{1}{4}}\hat{\Psi}(-\frac{1}{4})^2K+ O(X^{\epsilon}) & \text{if } \theta^4\equiv \mathbf{1}(D), \theta \not\equiv \mathbf{1} (D); \medskip \\
      O(X^{\epsilon}) & \text{if } \theta^4 \not\equiv \mathbf{1}(D),  \end{array} \right.   $$

\end{theorem}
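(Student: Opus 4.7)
The strategy is to reduce the sum of quartic exponential sums to a spectral problem on the quartic metaplectic cover of $GL_2(\F)$. I would first apply an exponential-sum identity to rewrite
$$\sum_{x\pmod{c}} e\!\left(Tr\frac{Ax^4+Bx^2+F}{c}\right)$$
in terms of Kloosterman sums carrying a metaplectic (quartic residue) character. Using that $A$ is a square and $B^2/(16A)\in\ri$, one can complete the square in $y=x^2$ and write $Ax^4+Bx^2+F\equiv A(x^2+B/(2A))^2+C\pmod{c}$ whenever $(2A,c)=1$. Since the map $x\mapsto x^2$ is two-to-one on units, contributing a quadratic character, the quartic sum decomposes as a quadratic Gauss sum plus a Sali\'e-type sum twisted by the quadratic residue symbol. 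Gauss sum reciprocity together with a Poisson-summation-style manipulation converts the Sali\'e piece into metaplectic Kloosterman sums on the quartic cover. The conditions $B=4B'$, $D\equiv 1\pmod 4$, and the squareness of $A$ and $B$ are precisely what is needed to control the local behavior at ramified places (especially at primes above $2$) and produce a clean Kloosterman expression with the correct metaplectic character.

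Second, I would insert the resulting sum into a Kuznetsov-like trace formula on the quartic cover of $GL_2(\F)$. After Mellin expanding each factor $\Psi(\sqrt{X}/|c^{\eta_j}|^2)$, the $c$-sum becomes a Dirichlet series in two complex variables weighted by $\theta$ and the metaplectic character, which is exactly the kind of sum the trace formula transforms into a spectral sum over cuspidal, residual, and Eisenstein automorphic representations. Shifting contours past the relevant poles, the main terms appear as residues: the leading term $X^{1/2}\hat\Psi(-1/2)^2 K'$ comes from a Dedekind-zeta-type pole in the Eisenstein contribution, present only when $\theta\equiv\mathbf 1$ (by orthogonality against the constant function); the unexpected secondary main term $X^{1/4}\hat\Psi(-1/4)^2 K$ arises from the residue of the quartic metaplectic Eisenstein series attached to the quartic theta representation, whose Fourier coefficients are quartic Gauss sums. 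Orthogonality forces this latter residue to vanish unless $\theta^4\equiv\mathbf 1\pmod D$, which accounts for the trichotomy in the statement. The cuspidal contribution is controlled using the Ramanujan bound $\alpha$; a bound of order $X^{\alpha/8+\epsilon}$ for that part explains the constraint $\epsilon>\alpha/8$ on the error.

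Third, the case analysis follows automatically from which residues survive: when $\theta\equiv\mathbf 1$, both the zeta-residue and the theta residue contribute, giving the two main terms; when $\theta\not\equiv\mathbf 1$ but $\theta^4\equiv\mathbf 1$, only the theta-function residue survives; and when $\theta^4\not\equiv\mathbf 1$, all main terms vanish and only the cuspidal bound $O(X^\epsilon)$ remains.

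\textbf{Main obstacle.} The principal difficulty is twofold. First, establishing the exponential sum identity that relates quartic exponential sums to metaplectic Kloosterman sums with the correct local constants at all ramified places (notably at primes above $2$, which are delicate for the quartic cover over $\F=\Q[\omega_8]$) requires a careful case-by-case analysis that uses the precise hypotheses on $A$, $B$, $F$, and $D$. Second, extracting the unexpected secondary main term $X^{1/4}$ is strictly harder than in the cubic Livn\'e--Patterson case: the quartic metaplectic Eisenstein series has a richer pole structure, and the constant $K$ depends on an explicit evaluation of quartic Gauss sums together with local metaplectic intertwining factors, which must be matched precisely against the output of the exponential sum identity in order to certify the asymptotic.
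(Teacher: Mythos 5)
Your decomposition at a prime modulus is the right starting point and matches the paper's Theorem \ref{c4}: writing the quartic sum as a plain quadratic sum plus a quadratic-character-twisted Sali\'e-type sum, and recognizing the latter as a quartic metaplectic Kloosterman sum after Gauss-sum manipulations. The trichotomy you extract from which residue survives (Dedekind-zeta pole for $\theta\equiv\mathbf 1$, quartic theta residue for $\theta^4\equiv\mathbf 1$, nothing otherwise) is also the right organizing principle.

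But there are two concrete gaps. First, and most significantly, your plan silently assumes the prime-modulus identity globalizes multiplicatively. It does not: when you pass to composite $c=mn$ by CRT, the quadratic piece at one prime factor multiplies the Kloosterman piece at the other, producing the ``cross terms'' of Proposition \ref{c400}. These are not lower order; handling them is Section \ref{sec:tro}, and for each divisor $m$ they contribute a \emph{new} residual Eisenstein series of level $4Dm^2$, yielding an infinite family of $X^{1/4}$ residues whose sum over $m$ is a substantial part of the constant $K$ (see $K_{\frac{DB^2}{16A},\frac{B^2}{16A}}$ in Theorem \ref{crrm}). Your proposal contemplates only a single residual Eisenstein series, so it would produce the wrong constant and, more importantly, would overlook an entire term of the same order $X^{1/4}$ as the ``secondary main term.''

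Second, the attribution of the $X^{1/2}$ main term to ``a Dedekind-zeta-type pole in the Eisenstein contribution'' of the trace formula is not how this term arises. The quadratic-sum piece $\sum_{x(c)} e(Tr(\frac{Ax^2+Bx}{c}))$ collapses to a quadratic Gauss sum $\sim\sqrt{\N(c)}$ times a twist; it is not a rank-two Kloosterman sum and does not fit on the geometric side of the Kuznetsov formula. The paper treats it entirely elementarily (Proposition \ref{q22}): after Gauss-sum reciprocity and splitting into arithmetic progressions, Poisson summation on the $c$-sum picks out the $m=0$ frequency and gives $X^{1/2}$ times an archimedean integral; the nonprincipal-character cases vanish by orthogonality in the arithmetic sum $T_{A,D}(\theta)$. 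If you tried to put this piece through the metaplectic trace formula you would be stuck, since the input is not of the right shape.

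Your ``main obstacle'' paragraph correctly flags the local analysis at primes above $2$ and the need to evaluate quartic Gauss sums, but the genuinely hard analytic work you did not anticipate is the cross-term spectral sum over levels $4Dm^2$ and the $m$-convergence question that accompanies it (Remark \ref{rre}); this is where the paper needs its carefully constructed variant of the Bruggeman--Miatello--Kuznetsov formula with two independent cusps, rather than a generic ``Kuznetsov-like trace formula.''
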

\begin{remarks}
\begin{itemize}

\item The unnatural looking term $\prod_{j=1}^2\Psi(\frac{\sqrt{X}}{|c^{\eta_j}|^2})$ ensures that our $c$-sum is finite as well as that we sum integral elements of the field with norm up to size $X.$ Over an imaginary quadratic field we could take the nicer looking term $\Psi(\frac{X}{\N(c)}),$ however in our case an infinite number of units gets in the way of such a test function. Summing integral elements of size $|c^{\eta_j}|^2 \sim \sqrt{X}, j=\{1,2\}$ should be the correct analog of Patterson's conjecture with summing rational integers less than or equal to $X.$ These products of test functions are in fact the natural functions to put into a Bruggeman-Miatello-Kuznetsov trace formula over a number field. One can compare such products of test functions in bounding sums of Kloosterman sums over totally real fields in \cite{BMP1} and arbitary number fields in \cite{BM}.

\item The constant $K$ will depend on Fourier coefficients of metaplectic residual Eisenstein series over the quartic cover of $GL_2$ at different levels. We will make a very explicit version of Theorem \ref{ftt0} in Section \ref{sec:mint}.


\item Another vantage point of introducing the character $\theta \mod(D)$ is that we see a kind of ``inner product" in Theorem \ref{ftt0} by averaging over $c$ of $\theta$ against the sum quartic exponential sum. That the inner product is large if $\theta^4\equiv \mathbf{1}\mod(D),$ and small for any other $\theta$ would be a facet we would like to see in a more general Patterson's conjecture.

\item Normally to apply the theory of quartic metaplectic forms one only needs the field to contain the fourth roots of unity. This is almost true in this case except that we need $2$ to be a square which requires the eighth roots of unity. We will discuss this more in Section \ref{sec:skk}.

\end{itemize}
\end{remarks}

We consider Theorem \ref{ftt0} as a smoothed version of Patterson's conjecture. With more careful analysis, the smoothing could be removed with loss in the error term. We also normalized by a $\frac{1}{\N(c)}$-factor to connect such sums more easily to the spectral theory of metaplectic forms which are crucial to the result. A summation by parts argument would remove this normalization if desired. 

Recall from Patterson's conjecture that if $f(a-x)=f(x)$ then we expect different behavior for $S(f(x),X),$ namely it is asymptotic to $X^{1+\frac{2}{n}}$ instead of $X^{1+\frac{1}{n}}.$ In our Theorem \ref{ftt0}, $f(x)=Ax^4+Bx^2+F=f(-x),$ and so we do see in the case $\theta$ is the trivial character, that our main term is is asymptotic to $X^{\frac{1}{2}}=X^{\frac{2}{4}},$ which is correct with our normalization of the $c$-sum. Believing that Patterson's conjecture would generalize to number fields, in that analogous sums would have analogous main terms in $X,$ Livn\'{e} and Patterson's Theorem \ref{lpe} as well as our Theorem \ref{ftt0} are in agreement. What was not expected is that also in these two choices of $\theta,$ there is a secondary main term. It is not clear if such secondary terms exists from the data in \cite{P} for exponential sums over $\Z.$

Another feature of our theorem is that if $\theta$ is the quartic residue character or a quadratic character, then the main term of size $X^{\frac{1}{2}}$ vanishes, and we are left with a main term of size $X^{\frac{1}{4}},$ which is what we would expect in Patterson's conjecture from a polynomial $f$ having no $a\in \ri$ such that $f(a-x)=f(x).$ This sort of property tells us that a correct version of Patterson's conjecture for number fields will be much more delicate to state than for $\Q.$ 

If $\theta$ is a character not of order $4,$ then our theorem looks much similar to Theorem \ref{lpe} for the case $\chi^3 \neq 1.$ Our error term is much better due to the smoothing as well as using the strongest results known towards the Ramanujan conjecture for automorphic forms, which via the Shimura correspondence gives strong bounds for metaplectic forms.

\section{Outline of Proof}\label{sec:skk}

An important role in our theorem is played by sums of Kloosterman sums. In particular, asking for a similar asymptotic to Patterson's conjecture for a sum of Kloosterman sums. Such asymptotics have been studied using the spectral theory of automorphic forms originally by Kuznetsov \cite{Kuz}; Goldfeld and Sarnak \cite{GS}; and Deshouillers and Iwaniec \cite{DeshI}.  This connection between sums of Kloosterman sums and automorphic forms is well known and is seen through the Kuznetsov or relative trace formula. In its most trivial form this connection can be seen for a $V \in C^{\infty}_0(\R^{+})$ for large $X>0$ as \begin{equation}\label{eq:iwk}\sum_{c=1}^\infty \frac{S(n,m,c)}{c} V(\frac{X}{c})=\sum_{\substack{\pi \\ \frac{1}{2} < s_{\pi}<1}} X^{s_{\pi}-\frac{1}{2}}\tilde{V}(s_{\pi})a_{\pi}(n)\overline{a_{\pi}(m)}+O(X^{\epsilon})\end{equation} for any $\epsilon>0$ and $\tilde{f}$ some Bessel transform of $V.$ The Kloosterman sum here is $S(n,m,c)=\sum_{x(c)^{*}}e(\frac{mx+n\overline{x}}{c}).$ The sum over $\pi$ is of automorphic forms of level one with eigenvalue $s_{\pi}(1-s_{\pi}).$ Forms with eigenvalues in this range $\frac{1}{2} < s_{\pi}<1$ are called exceptional. In this special case of level one forms over $\Q$, there are no exceptional forms (see \cite{DeshI}) and this sum of Kloosterman sums has no asymptotic but just a bound $O(X^{\epsilon})$ for any $\epsilon>0.$ The non-existence of such exceptional forms for general congruence subgroups is
the Selberg eigenvalue conjecture, which is intimately tied to the Ramanujan conjecture for automorphic representations. 

Rather than look at look at automorphic forms over $\Q$ we look at forms over $\F.$ As well, we do not look quite at automorphic forms but at metaplectic forms on $\F.$ These are functions on $\Gamma \backslash SL_2(\C) \slash SU(2) \times \Gamma \backslash SL_2(\C) \slash SU(2),$ for a certain discrete subgroup $\Gamma,$ that--at their simplest-- transform on the left by the Kubota residue symbol. We will be more explicit about their definition in Section \ref{sec:meta}. Also, analogous to automorphic forms, there is an associated spectral theory of metaplectic forms and a similar connection to sums of Kloosterman sums. However, in this case we know a metaplectic form with an exceptional parameter exists, namely the quartic theta function which is a residual Eisenstein series. This was discovered by Kubota in \cite{Ku}. Label the residual Eisenstein series $f_{00}$ with eigenvalue parameter $s_{00}.$ Assume the field we study is just $\Q[i]$ a subfield of $\F.$ Then the quartic metaplectic analog of \eqref{eq:iwk} in its simplest form is \begin{equation}\label{eq:iwm}\sum_{c \in \Z[i],c\equiv 1(4)}^\infty \frac{S_4(n,m,c)}{\N(c)} V(\frac{X}{\N(c)})= X^{s_{00}-1}\tilde{V}(s_{00})a_{f_{00}}(n)\overline{a_{f_{00}}(m)}+ \sum_{\substack{\pi \\ 0 < s_{\pi}<s_{00}}} X^{s_{\pi}-1}\tilde{V}(s_{\pi})a_{\pi}(n)\overline{a_{\pi}(m)}+O(X^{\epsilon}),\end{equation} with $$S_4(n,m,c)=\sum_{x(c)^{*}}\left(\frac{x}{c}\right)_4 e(Tr(\frac{mx+n\overline{x}}{c})).$$ 

So how does this sum of metaplectic Kloosterman sums relate to a sum of quartic exponential sums in Theorem \ref{ftt0}? As mentioned in the introduction, there are connections or identities between Kloosterman sums and other exponential sums. The exponential sum for a prime modulus in Theorem \ref{ftt0} (without the term $e(\frac{F}{p})$) can be written for $p$ prime as $$\sum_{x(p)} e(Tr(\frac{Ax^4+Bx^2}{p}))=\sum_{x(p)} [(\frac{x}{p})_2+1] e(Tr (\frac{Ax^2+Bx}{p})).$$ Our interests is in summing the left hand side over a general $c \in \ri.$ On the right hand side if we consider a sum $\sum_{x(c)}e(Tr(\frac{Ax^2+Bx}{c}))$  over $c \in \ri$ as we do with Kloosterman sums in \eqref{eq:iwm} it is straightforward to understand its asymptotic. We study such a sum in Section \ref{sec:qdd}.  However, if we took a sum over $c\in \ri$ of the sum $\sum_{x(c)}(\frac{x}{c})_2 e(Tr(\frac{Ax^2+Bx}{c}))$, then the asymptotic is not straightforward. 
In Section \ref{sec:qdd}, we prove --only assuming the field contains the fourth roots of unity-- that for $p$ prime  \begin{equation}\label{eq:pqpq}\sum_{x(p)} \left(\frac{x}{p}\right)_2 e(Tr(\frac{Ax^2+Bx}{p}))=\left(\frac{AB}{p}\right)_2 e(Tr(\frac{-B^2\overline{8 A}}{p}))\sum_{b(p)} \left(\frac{b}{p}\right)_4 e(Tr(\frac{\overline{4^2A}B^2(b+\overline{b})}{p})).\end{equation} Likewise, assuming the field contains the eighth roots of unity we prove for a general integral element $c\equiv 1(4),(AB,c)=1,$ \begin{multline} \label{eq:qbo} \sum_{x(c)}e(Tr(\frac{Ax^4+Bx^2}{c}))=\left(\frac{AB}{c}\right)_2 e(Tr(\frac{-B^2\overline{8 A}}{c}))S_4(\overline{4^2A}B^2,\overline{4^2A}B^2,c)+\sum_{x(c)}e(Tr(\frac{Ax^2+Bx}{c}))+\\ \{ \mbox{Cross terms}\}.\end{multline} Ignoring the ``Cross terms" for a moment we have replaced the quartic sum over an integral element $c \in \ri$ with a Kloosterman sum and an elementary quadratic sum.

\begin{remark}Why do we need the fourth roots of unity for the identity over a finite field but the eighth roots of unity for an element of the ring of integers? When one proves the identity for prime powers $p^m$ in Section \ref{ppoww} it is crucial to understand not only solutions $Ax^4+Bx^2+F \equiv 0 \mod(p^m)$ but solutions of its derivative $4Ax^3+2Bx \equiv 0 \mod(p^m).$ Assuming $x \not\equiv 0(p^m),$ for \eqref{eq:qbo} to hold for a prime power the latter solutions of the derivative must have a non-trivial solution to $x^2 \equiv B\overline{2A} \mod(p^m).$ More so, this must be true uniformly for all $p$ coprime to $2.$ This clearly requires $2$ to be a global square which is the case in $\F=\Q[\omega_8].$ If we compare this same idea of going from the finite field case to the integral case in the Katz-Livn\'{e}-Patterson cubic identity there is a minor ``miracle" that $3$ is square in $\Z[\omega_3]$ and therefore they do not need to extend the field from the minimal field needed to study cubic metaplectic forms.\end{remark}


Consider summing \eqref{eq:qbo} over $c \in \ri$. If we can ``swap out" the metaplectic Kloosterman sum with a spectral sum as in \eqref{eq:iwm}, then we will be able to get an asymptotic using the fact that a residual Eisenstein series exists. In essence, that is what is done in \cite{LP} for sums of cubic exponential sums over $\Q[\omega_3].$
There the connection between Kloosterman sums and exponential sums is simpler and seen in the identity for $c \equiv 1(3),$ \begin{equation}\label{eq:kbo} \sum_{x(c)} e(Tr(\frac{Ax^3+Bx}{c}))=(\frac{\overline{A}}{c})_3 \sum_{y(c)^{*}} (\frac{y}{c})_3 e(Tr(\frac{y-B^3\overline{3^3A y}}{c})).\end{equation}

Assuming the Kloosterman sum has an asymptotic from the residual Eisenstein series and the sum of quadratic exponential sums is understood from Section \ref{sec:qdd}, we must understand the ``Cross-terms" when summing over $c \in \ri.$ We sketch how to address this issue in Section \ref{sec:saq}.

\subsection{Archimedean analog of exponential sums}

The cubic sum we just mentioned has an analogous identity from the archimedean perspective which is called Nicholson's identity: $$\int_0^\infty \cos(x^3+Bx)dx=\frac{B^{1/3}}{3}K_{1/3}(2(B/3)^{3/2}).$$ The connection between the exponential sum and archimedean identity is more readily seen when rewritten (see \cite{DI}) as $$\int_{-\infty}^\infty \exp(i(Ax^3+x))dx=\frac{1}{\sqrt{3}}\int_0^\infty (x/A)^{1/3}\exp(-x-(3^3Ax))^{-1}\frac{dx}{x}.$$ The archimedean analog of the quartic identity of \eqref{eq:pqpq} is $$\int_0^\infty \cos(Ax^2) \sin(Bx)\frac{dx}{\sqrt{x}}=\frac{\sqrt{B}}{2\sqrt{A}}\cos(\frac{B^2}{8A}-\frac{3\pi}{8})J_{1/4}(\frac{B^2}{8A}),$$ using (\cite{GR},6.686) and $J_{1/2}(Bx)=\sin(Bx)\sqrt{\frac{2}{\pi x}}.$

Our simplified explanation above, for understanding the asymptotic for certain sums of quartic exponential sums via a spectral theory of metaplectic forms, assumes virtually no technicalities. The problem is that there are many difficulties in the transition between the sums of Kloosterman sums and quartic exponential sums, and we explain those now.

\subsection{Choice of trace formula}
The first technical point is that we clearly need a trace formula for metaplectic forms. It turns out we can adapt a trace formula for automorphic forms over an number field for our metaplectic spectrum. Let us mention the many trace formulas for automorphic forms we can choose from. In \cite{BM}, Bruggeman and Miatello create a Kuznetsov trace formula over any number field, but they choose the two Poincare series needed in the construction of the formula both at the cusp $\infty.$ In \cite{BMo} and \cite{L}, Bruggeman and Motohashi and Lokvenec-Guleska, respectively, construct a trace formula over an imaginary quadratic field with representations of all weights and expansions at different cusps, but the test functions involved--since they include all representations of various weights--are difficult to analyze. We mention Louvel in \cite{Lo} does a similar analysis to what we need in this paper using the trace formula of \cite{L}. However, his analysis of test functions on the spectral side of the trace formula is not strong enough for the results we need. This seems to be due to the extra representations of non-zero weight. In fact since we are only concerned with the residual Eisenstein series of weight $0,$ having the extra representations of non-zero weight is unnecessary. Another formula is due to Bruggeman, Miatello, and Pacharoni \cite{BMP1} where they construct a trace formula over a totally real field with different cusps; it is very similar to \cite{BM} in design. 

We chose to create our own trace formula in the appendix that is a direct consequence of combining the ideas of \cite{BM} over an number field and \cite{BMP1} which uses various cusps. The advantage of this Bruggeman-Miatello Kuznetsov trace formula is that it is directly amenable to the estimates of the papers of Bruggeman-Miatello \cite{BM1} (respectively, Miatello-Wallach \cite{MW}). There the authors take their trace formula over a real rank one group (resp. product of real rank one groups) and give asymptotics for sums of Kloosterman sums. The key to getting such estimates, which we describe in detail in Section \ref{sec:mww}, is that by choosing the test function $f$ on the spectral side of the trace formula, we deal with a Bessel transform of $f$ on the geometric side of the trace formula. By careful analysis following \cite{BM1}, we can realize this Bessel transform as a much simpler Mellin transform of $f$ plus a negligible error term. 

\subsection{Choice of cusps}

The reason we need a Kuznetsov trace formula with freedom to choose two different cusps is to synthesize the connection from Kloosterman sums to quartic exponential sums via the identity \eqref{eq:qbo}. We note from \eqref{eq:qbo} we looked at the ``unramified" case where  $(AB,c)=1,$ but this does not take care of all cases. Livn\'{e} and Patterson \cite{LP} seeing the same problem, cleverly have all primes that divide $B$ divide the level $D$ of the metaplectic forms connected to the Kloosterman sum in question. Then looking at the trace formula at specific cusps, the $c$-sum will be co-prime to $D,$ so ``unramified" cases are all the cases and there are no cases where $(AB,c)>1.$ They call this a technical point that needs to be removed for a more general statement, but  in both theirs and our case, it will require much deeper local analysis of these kind of exponential sum identities in \eqref{eq:qbo} and  \eqref{eq:kbo} in ``ramified" situations.

The cusps chosen depend on the metaplectic forms with nontrivial level $D$ and some nebentypus $\theta \mod (D).$ The concern then is, does there exists a residual Eisenstein series over the quartic cover of $GL_2$ that transforms by not only the quartic residue symbol but also by this nebentypus character $\theta$ of modulus $D?$ The work of Kubota \cite{Ku} or Patterson's work on the cubic cover does not cover these situations. So in Section \ref{sec:err} we show that such a residual Eisenstein series exists.

\subsection{The difficult ``Cross terms"}\label{sec:saq}
If we accept that the sums of Kloosterman sums and sums of quadratic exponential sums are understood from \eqref{eq:qbo}, we are left with the asymptotics of terms we call ``Cross terms" which look like  $$\sum_{\substack{n,m \in \ri\\ (n,m)=1\\ nm\equiv 1(4)}} \frac{\theta_D(nm)\prod_{j=1}^2\Psi(\frac{\sqrt{X}}{|(nm)^{\eta_j}|})}{\N(nm)} \left[\sum_{x(m)}e(\frac{\overline{n}(Ax^2+Bx)}{m})\right]\left[e(\frac{-B^2\overline{8 Am}}{n})S_4(-B^2\overline{m4^2A},n)\right].$$ In Section \ref{sec:tro} we show that by fixing the $m$-sum, the $n$-sum can be connected to another spectral sum of metaplectic forms of level $4Dm^2.$ This will give another main term of size $X^{1/4}.$ In other words, for each $m$ we will get a main term corresponding to a residual Eisenstein series  of level $4Dm^2$ at cusps $\{\sigma_a,\sigma_b\}$ (different than the cusps used for the other main term) of size $X^{1/4}.$ We state Theorem \ref{crrm} now and prove it in Section \ref{sec:tro}. We give some notation first. Let $f_{00,4Dm^2}$ denote the residual Eisenstein series of level $4Dm^2,$ and transforming by character $\theta_D \chi   (\frac{\cdot}{m})_2 \mod (4Dm^2)$ with $\chi$ a Dirichlet character modulo $m^2.$ The residual Eisenstein series has $r$-th Fourier coefficient at the cusp $\sigma$ denoted by $c_{r,\sigma}(f_{00,4Dm^2}),$ and  $d_{r,\sigma}$ is a certain normalization needed in the trace formula. We have

\begin{theorem}\label{crrm}
 
  Assume the same hypothesis and notation as Theorem \ref{ftt0}. For a constant $K_{\frac{DB^2}{16A},\frac{B^2}{16A}}$ depending on certain cusps $\sigma_a,\sigma_b$ defined in Theorem \ref{ftt}, we have \begin{gather*}\label{eq:harde} \sum_{\substack{n,m \in \ri\\ (n,m)=1\\ nm\equiv 1(4)}} \frac{\theta_D(nm)\prod_{j=1}^2\Psi(\frac{\sqrt{X}}{|(nm)^{\eta_j}|^2})}{\N(nm)} \left[\sum_{x(m)}e(\frac{\overline{n}(Ax^2+Bx)}{m})\right]\left[e(\frac{-B^2\overline{8 Am}}{n})S_4(-B^2\overline{m4^2A},n)\right]=\end{gather*}

$$ \left\{ \begin{array}{ll}   
X^{\frac{1}{4}}\hat{\Psi}(-\frac{1}{4})^2K_{\frac{DB^2}{16A},\frac{B^2}{16A}}+ O(X^{\epsilon}) & \text{if } \theta_D^4\equiv \mathbf{1}(D); \medskip \\
      O(X^{\epsilon}) & \text{if } \theta_D^4 \not\equiv \mathbf{1}(D).  \end{array} \right.   $$

The term $K_{\frac{DB^2}{16A},\frac{B^2}{16A}}$ is a convergent sum \begin{gather*}\label{eq:wconr}\frac{1}{g(\frac{-1}{4})^2}(\frac{\pi}{2})^2 \bigg\{ \sum_{\substack{m \in \ri\\ m\equiv 1(4)}} \frac{\theta_D( m)\N(m)^{1/4}}{\phi(m)} \sum_{\substack{\chi \mod (m)\\ \chi^4 \equiv 1\mod (m)}} \tau(\chi) \chi(8A(\overline{-B^2})) \times \\ \bigg[\overline{c_{\frac{4DB^2}{16A},\sigma_a}(f_{00})}c_{\frac{B^2}{16A},\sigma_b}(f_{00})d_{\frac{4DB^2}{16A},\sigma_a}(\frac{1}{4})\overline{d_{\frac{B^2}{16A},\sigma_b}(\frac{1}{4})}+\frac{\{\mbox{Remainder}\}}{X^{\frac{1}{4}} \hat{\Psi}(\frac{-1}{4})}\bigg]\bigg\}\end{gather*}  Here $\tau(\chi)$ is a Gauss sum. The Remainder term is defined in \eqref{eq:ress}, it is just the spectral sum of metaplectic forms excluding the residual Eisenstein series. In terms of $X,$ the $\{Remainder\}$ term is $o(X^{1/4}).$ 
\end{theorem}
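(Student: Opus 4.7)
The plan is to treat the double sum as iterated, fixing $m$ first, evaluating the inner quadratic exponential sum in $x\pmod m$ in closed form, and then applying a Kuznetsov trace formula for metaplectic forms to the remaining $n$-sum of phase-times-Kloosterman-sum shape.

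For fixed $m$ with $(m,n)=1$ and $(m,2A)=1$ (which follows from $(nm,D)=1$, $nm\equiv 1(4)$, and the hypothesis that every prime dividing $B'=B/4$ divides $D$), the finite sum $\sum_{x(m)} e(\bar n(Ax^2+Bx)/m)$ is a twisted quadratic Gauss sum. Completing the square yields the phase $e(-\bar n B^2\overline{4A}/m)$ times a quadratic Gauss sum built from the character $\left(\frac{\bar n A}{m}\right)_2$. Pairing the $m$-dependent phase with the factor $e(-B^2\overline{8Am}/n)$ via twisted reciprocity produces a clean factorization: a purely $m$-dependent Gauss-sum factor (which becomes the inner sum $\sum_\chi \tau(\chi)\chi(8A\overline{-B^2})$ over characters $\chi\pmod m$ of order dividing $4$) multiplied by a pure $n$-sum in which the shifted frequencies $\frac{B^2}{16A}$ and $\frac{4DB^2}{16A}$ are encoded in the Kloosterman arguments and the outer phase.

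After this arithmetic manipulation the remaining $n$-sum has the form
\[
\sum_{\substack{n\in\ri,\,(n,m)=1\\ n\equiv\bar m(4)}} \frac{\Theta_m(n)\prod_{j=1}^2\Psi(\sqrt X/(|m^{\eta_j}|^2|n^{\eta_j}|^2))}{\N(n)}\, e(\alpha_m/n)\, S_4(\beta_m,\beta_m,n),
\]
with nebentypus $\Theta_m=\theta_D\cdot\chi\cdot\left(\frac{\cdot}{m}\right)_2$ of modulus $4Dm^2$ and specific $\alpha_m,\beta_m\in\F$. This is precisely the geometric side, between two cusps $\sigma_a,\sigma_b$, of the Bruggeman--Miatello--Kuznetsov trace formula constructed in the appendix for the quartic metaplectic cover of $GL_2$ at level $4Dm^2$. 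Applying the formula and the Mellin/Bessel analysis of Section \ref{sec:mww}, the residual Eisenstein series $f_{00,4Dm^2}$ contributes the only exceptional term, of magnitude $X^{1/4}\hat\Psi(-1/4)^2$ with coefficient the paired Fourier coefficients $\overline{c_{\frac{4DB^2}{16A},\sigma_a}(f_{00,4Dm^2})}\,c_{\frac{B^2}{16A},\sigma_b}(f_{00,4Dm^2})$ weighted by $d_{r,\sigma}(1/4)$. This residual contribution exists precisely when $\Theta_m^4\equiv\mathbf 1$ modulo $4Dm^2$, which by the construction of Section \ref{sec:err} reduces to $\theta_D^4\equiv\mathbf 1(D)$; this accounts for the dichotomy in the conclusion. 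All remaining spectral contributions are bounded by the best known exponent $\alpha$ toward the Ramanujan conjecture, giving the overall error $O(X^\epsilon)$ for $\epsilon>\alpha/8$.

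The main obstacle is the final sum over $m$. Two uniformities are required: first, the trace-formula remainder (cusp forms and unitary Eisenstein spectrum) must be summable in $\N(m)$ against the test function applied at scale $\sqrt X/|m^{\eta_j}|^2$, which forces a uniform-in-level version of the Bruggeman--Miatello estimates of \cite{BM1}; and second, the Fourier coefficients of $f_{00,4Dm^2}$ must factor as $c_{r,\sigma}(f_{00,4Dm^2})=c_{r,\sigma}(f_{00})\cdot\tau(\chi)\chi(\cdots)$ times an elementary $m$-dependent factor, so that the $m$-sum collapses to the stated arithmetic expression involving $\tau(\chi)\chi(8A\overline{-B^2})$ and the level-$4D$ Fourier coefficients. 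The latter factorization follows from the explicit description of the quartic theta of \cite{Ku} and its level-$m^2$ twists in Section \ref{sec:err}, where the $\chi$-twist is produced by precisely one Gauss-sum factor. Assembling these ingredients and recording the Mellin factor $\hat\Psi(-1/4)^2$ produces the claimed asymptotic, with the ``Remainder'' of the stated constant being exactly the non-residual spectral sum, which is $o(X^{1/4})$ by the Ramanujan-type bounds used above.
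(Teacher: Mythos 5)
Your outline correctly identifies the overall architecture of the proof: for fixed $m$ evaluate the quadratic Gauss sum in $x\pmod m$, move the $m$-dependent phases and quadratic symbol onto $n$ via reciprocity, expand the resulting additive character modulo $m$ in multiplicative characters $\chi$, absorb the quartic symbol $(\tfrac{m}{n})_4$ into the $S_4$ by a change of variables, and recognize the remaining $n$-sum as the geometric side of the Bruggeman--Miatello--Kuznetsov formula at level $4Dm^2$ with nebentypus $\theta_D\chi(\tfrac{\cdot}{m})_2$, taken between two cusps $\sigma_a,\sigma_b$. The residual Eisenstein series at that level supplies the $X^{1/4}$ term exactly when $\theta_D^4\equiv\mathbf 1$ (and the associated $\chi^4\equiv\mathbf 1$ subsum survives), and the rest of the spectrum is $O(X^{\epsilon})$ by the Ramanujan--Shimura bound. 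This matches the paper.

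However, there is a genuine gap in the last step, where you try to justify the final $m$-sum. You claim two things that are not established (and cannot be extracted from what the paper proves): (1) that the Bruggeman--Miatello estimates hold uniformly in the level $4Dm^2$, so that the non-residual spectral remainder is summable over $m$; and (2) that the Fourier coefficients of $f_{00,4Dm^2}$ at $\sigma_a,\sigma_b$ factor as fixed level-$D$ coefficients times an elementary $\chi$-Gauss-sum factor. Neither of these is proved anywhere: the paper explicitly flags the level-dependence of the metaplectic Fourier coefficients as an open issue (citing only a Deshouillers--Iwaniec \emph{conjecture} for the classical case), and it never factors $c_{r,\sigma}(f_{00,4Dm^2})$ through a level-$D$ object. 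In fact, the theorem's constant $K_{\frac{DB^2}{16A},\frac{B^2}{16A}}$ still carries the level-$4Dm^2$ Fourier coefficients inside the $m$-sum, together with the full Remainder term divided by $X^{1/4}\hat\Psi(-1/4)$; it is not the collapsed form you describe. The paper's way of handling convergence is much more indirect and elementary: the original double sum over $(n,m)$ is manifestly a finite sum (the product of compactly supported $\Psi$'s truncates $\N(nm)$), so the $m$-wise spectral identity, summed over $m$, is forced to equal a finite quantity. The constant $K$ is then \emph{defined} via this equality, and only afterward is it observed (Corollary \ref{corw}) that \emph{if} one could interchange the $m$-sum with the residual-versus-Remainder split, the expression would simplify as you write. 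Your proposal therefore proves something closer to the conditional Corollary \ref{corw}, not Theorem \ref{crrm} itself, and the two uniformity/factorization assertions you lean on would each require new results not available in the paper.
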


\subsection{Appendix}
As we mentioned earlier the first part of the Appendix is concerned with a construction of a Kuznetsov trace formula over $\F$ with multiple cusps.

The second part of the appendix is more of a commentary on how far we can extend the exponential sum identities like \eqref{eq:qbo} in connecting exponential sums with integral polynomial arguments and $GL_2$ Kloosterman sums. It seems likely that any higher degree polynomials will not be associated to rank 2 Kloosterman sums.

{\bf Acknowledgements.} The author would like to thank  Samuel Patterson and Akshay Venkatesh for pointing out errors in earlier drafts of the paper.

\section{Preliminaries}

\subsection{Notation}

Let $\F=\Q[\omega_8]$ and $\ri=\Z[\omega_8]$ be its ring of integers. Define $\mathbb{G}=R_{\F\slash \Q} (SL_2)$ as the restriction of scalars of $SL_2$ over $\F.$ 
The field has $4$ complex embeddings $\eta_1, \overline{\eta_1}, \eta_2, \overline{\eta_2}.$ We define $$G:=\mathbb{G}(\R) \simeq SL_2(\C) \times SL_2(\C)$$ and $$\mathbb{G}(\Q) \simeq \{(x^{\eta_1}, x^{\eta_2}): x \in SL_2(\F)\}.$$ We consider $\F$ as embedded in $\C^2$ by $\psi \in \F \to (\psi^{\eta_1}, \psi^{\eta_2}),$ and the image of $SL_2(\ri)$ corresponds to $\mathbb{G}(\Z).$ Let $K$ be the maximal compact subgroup $SU_2(\C)\times SU_2(\C)$ of $G = \mathbb{G}(\R).$

For $x=(x_1,x_2)\in \C^2,$ we let $S(x):=2[\Re(x_1)+\Re(x_2)]$ which extends the trace $Tr_{\F \slash \Q} :\F \to \Q.$ As well, we let $\N(x)=|x_1|^2|x_2|^2$ extend the norm of $\F$ over $\Q$ to $\C^2.$

\subsubsection{Functions of product type}

The test functions on $G$ that we use are of product type: $f(g)=f_1(g_1)f(g_2)$ for $g=(g_1,g_2) \in G$ with $f_j$ a complex valued function on $SL_2(\C).$

\subsubsection{Subgroups of $G$} For $y \in \R^{+,2}$ we put $$a[y]:=\left( \begin{pmatrix} \sqrt{y_1} & 0 \\ 0 & \frac{1}{\sqrt{y_1}}\end{pmatrix},\begin{pmatrix} \sqrt{y_2} & 0 \\ 0 & \frac{1}{\sqrt{y_2}}\end{pmatrix}\right).$$ This is the identity component of a maximal $\R$-split torus in $G$ which we label $A.$ We normalize the Haar measure of $A$ by $da=\frac{dy_1}{y_1}\frac{dy_2}{y_2}.$

For $x \in \C^2,$ we let $$n[x]:=\left(\begin{pmatrix}1 & x_1 \\ 0& 1 \end{pmatrix},\begin{pmatrix}1 & x_2 \\ 0& 1 \end{pmatrix}\right) \in G.$$ The normalization of the Haar measure for $N:=\{n[x]:x \in \C^2\}$ is $\frac{dx_1\overline{dx_1}}{-2\pi i}\frac{dx_2\overline{dx_2}}{-2\pi i}.$

For $u \in \C^{*,2}$ we define $$b[u]:=\left( \begin{pmatrix}u_1 & 0 \\ 0& \frac{1}{u_1}\end{pmatrix},\begin{pmatrix}u_2 & 0 \\ 0& \frac{1}{u_2}\end{pmatrix}\right).$$ Let $M$ be the subgroup $\{b[u]:|u_j|=1\}$ of $K=SU(2) \times SU(2).$ We then have the Iwasawa decomposition $G=NAK$ with standard Parabolic subgroup $P:=NAM.$

\subsection{Cusps}

Let $\Gamma$ be a finite index subgroup of $SL_2(\ri),$ then it follows that $\Gamma$ has a finite number of cusp classes. Let $\mathcal{P}$ be a set of representatives of those classes. For each $\sigma \in \mathcal{P},$ we fix $g_{\sigma}$ such that $\sigma=g_{\sigma} \cdot \infty.$

For each $\sigma \in \mathcal{P}$ there is a parabolic subgroup $P^{\sigma}:=g_{\sigma}Pg_{\sigma}^{-1},$ with decomposition $P^{\sigma}=N^{\sigma}A^{\sigma}M^{\sigma},$ with $N^{\sigma}:=g_{\sigma}Ng_{\sigma}^{-1}, A^{\sigma}=g_{\sigma}Ag_{\sigma}^{-1},$ and $M^{\sigma}=g_{\sigma}Mg_{\sigma}^{-1}.$ We use conjugation by $g_{\sigma}$ to transport Haar measures of $N,A,$ and $M$ to $N^{\sigma},A^{\sigma},$ and $M^{\sigma}.$

For each cusp $\sigma \in \mathcal{P}$ and $g \in G$ we have unique decomposition $g=n_{\sigma}[g]g_{\sigma}a_{\sigma}[g]k_{\sigma}[g]$ with $n_{\sigma}[g] \in N^{\sigma}, a_{\sigma}[g]\in A,$ and $k_{\sigma}[g] \in K.$ 

We choose $a^{-2\rho} dn da$ as the left invariant measure on $G\slash K,$ where we note $a[y]^{\rho}=\sqrt{\N(y)}.$

\subsection{Discrete subgroups}
Let $\Gamma_{P^{\sigma}}:=\Gamma \cap P^{\sigma}$ and $\Gamma_{N^{\sigma}}:=\Gamma \cap N^{\sigma}.$ There is a lattice $t_{\sigma}$ such that $\Gamma_{N^{\sigma}}=\{g_{\sigma}n[\xi]g_{\sigma}^{-1}:\xi \in t_{\sigma}.\}$

\subsection{Characters} \label{sec:chcc}
We put $t_{\sigma}'=\{r \in \F: Tr(rx) \in \Z \mbox{ for all } x \in t_{\sigma}\}.$
 All characters of $N^{\sigma}$ are $\chi_r:g_{\sigma}n[x]g_{\sigma}^{-1} \to e^{2\pi i S(rx)}$ for $r \in \C.$ All characters of $\Gamma_{N^{\sigma}}\backslash N^{\sigma} $ are obtained by taking $r \in t_{\sigma}'.$  For a character $\theta$ of $\Gamma,$ we say a cusp $\sigma$ is {\it essentially cuspidal} if  $\theta\vert_{\Gamma_{N^{\sigma}}}=1$.

The important subgroups of ${\rm SL}_2(\ri)$ for us are defined by

\begin{align}
\Gamma_1(4) \ \
&=\{\gamma\in {\rm SL}_2(\ri) \, : \, \gamma\congru I \pmod 4\}, \label{Kloo:eq:3}\\
\Gamma_0(D) 
&= \{\gamma\in {\rm SL}_2(\ri) \, : \, \gamma\congru 
\left(\begin{smallmatrix} * & * \\ 0 & * \end{smallmatrix}\right)  \pmod D\}. \label{Kloo:eq:4}
\end{align}
The Kubota symbol $\kappa$ can now be introduced. It is defined on $\Gamma_1(4)$ by

\begin{equation*}
\kappa (\gamma) = \begin{cases} 
\left(\frac{a}{c}\right)_{\!4} & \ec{if $c\neq 0$}\\
1 &\ec{if $c=0$},
\end{cases} \qquad \ec{ where } \gamma=\begin{pmatrix}a&b\\ c&d\end{pmatrix}\in \Gamma_1,
\end{equation*} with $\left(\frac{a}{c}\right)_{\!4}$ the quartic power residue symbol.

We let $\Gamma_2$ be the group generated by $\Gamma_1(4)$ and $SL_2(\Z).$ It was shown in [\cite{P4}, p. 127] that the cubic residue character extends to the group generated by $\Gamma_1(3)$ and $SL_2(\Z)$ for the ring $\Z[\omega_3]$ if it is defined to be trivial on $SL_2 (\Z).$ It is an analogous argument to extend $\kappa$ in this setting to $\Gamma_2$ by also defining it to be trivial on $SL_2(\Z).$ 

\section{Metaplectic forms}\label{sec:meta}

In the rest of this section we shall consider the discrete subgroup of $SL_2(\ri),$ $\Gamma=\Gamma_1(4) \cap \Gamma_0(D), D \in \ri.$  Let $\theta$ be a Dirichlet character modulo $D$ over $\ri.$ We consider $\theta$ also as a character of $\Gamma$ by $$\theta(\begin{pmatrix}a&b\\ c&d\end{pmatrix})=\theta(d).$$
Let $L^2\left( \Gamma \backslash G,\theta \kappa\right)$ be the space of square-integrable functions on $G$ which satisfy 

\begin{equation*}
f(\gamma gk) = \theta(\gamma)\kappa(\gamma) f(g) \qquad \ec{for all } \gamma \in \Gamma, k \in K.
\end{equation*} 
Under the action of $G$ by the regular representation, $L^2\left( \Gamma \backslash G,\theta\kappa\right)$ decomposes into irreducible unitary representations with finite multiplicities, say 

\begin{equation}
L^2\left( \Gamma \backslash G,\theta \kappa\right)=\barre{\bigoplus V}.
\end{equation}

Let us choose a complete orthonormal basis $\{f_l\}_{l \geq 0}$ of the space of $K$-finite vectors in  the discrete spectrum, call it $L_d^2\left( \Gamma \backslash G,\theta \kappa\right),$ of $L^2\left( \Gamma \backslash G,\theta \kappa\right)$ such that each $f_l$ generates one of the irreducible $V$ under the action of $G.$ The orthogonal complement of $L_d^2\left( \Gamma \backslash G,\theta \kappa\right)$ in $L^2\left( \Gamma \backslash G,\theta \kappa\right),$ call it $L_c^2\left( \Gamma \backslash G,\theta \kappa\right),$ is described by integrals of unitary Eisenstein series.  

As $G\slash K=SL_2(\C)\slash SU_2(\C) \times SL_2(\C)\slash SU_2(\C),$ each $f_l \in L_d^2\left( \Gamma \backslash G,\theta \kappa\right)$ is an eigenvector of the Laplace operators $L_1,L_2.$ We standardly normalize the Laplace operator to be $L_jf_l=\lambda_{l,j} f_l,$ with $\lambda_{l,j}=1-\mu_{l,j}^2, \mu_{l,j} \in \left(i[0,\infty) \cup (0,1]\right).$ Forms with $\mu_{l,j} \in (0,1]$  for any $j$ are said to have {\it exceptional spectral parameter.}

\subsection{Fourier Expansion of the discrete spectrum}
Define $$W_{\sigma}^{r,\nu}(ng_{\sigma}a[y]k):=\chi_r(n)\sqrt{N(y)}\prod_{j=1}^2 K_{\nu_j}(4\pi y_j\sqrt{|r_j|}),$$ with $K_{\nu}$ the $K$-Bessel function of index $\nu.$ Let $$d_{r,\sigma}(\nu):=
\prod_{j=1}^2 d_{r,j}(\nu_j)$$ with 
$$d_{r,\sigma}(\nu_j)=\frac{1}{vol(\Gamma_{N^{\sigma}}\backslash N^{\sigma})} \frac{2^{1-\nu_j}(2\pi\sqrt{|r_j|})^{-\nu_j}}{\Gamma(\nu_j+1)}.$$  Fix a cusp $g_{\sigma}\cdot \infty$ which is essentially cuspidal with respect to $\kappa, \theta.$ Then a metaplectic form $f_l$ is invariant under $\Gamma_{N^{\sigma}}$ and therefore has a Fourier expansion at the cusp $\sigma=g_{\sigma}\cdot \infty.$ The Fourier expansion at the cusp $\sigma$ is $$f_l(g)=F_{0,\sigma}f_l(a_{\sigma}[g])+\sum_{r \in t_{\sigma}'} c_{r,\sigma}(f_l)d_{r,\sigma}(\mu_f)W_{\sigma}^{r,\mu_l}(g),$$ with $F_{0,\sigma}f_l(a_{\sigma}[g])$ the Fourier term at $r=0.$ It is not crucial to explicate the zeroth Fourier coefficient, but if a form $f_l$ has a vanishing zeroth Fourier coefficient for all cusps $g_{\sigma}\cdot \infty$ of $\Gamma \backslash G,$ we call the form a {\it metaplectic cusp form} or a {\it cusp form}.
\subsection{Eisenstein Series}
We now describe $L_c^2\left( \Gamma \backslash G,\theta \kappa\right).$ For each $\sigma \in \mathcal{P},$ there is an Eisenstein series $$E(P^{\sigma},\nu, i\mu,g):=\sum_{\gamma \in \Gamma_{P^{\sigma}}\slash \Gamma} a_{\sigma}[\gamma g]^{\rho+2\nu\rho+i\mu}.$$ Here $\nu \in \C,$ and $\mu$ is an element of a lattice in the hyperplane $\Re(x)=0, x \in \C^2.$ In particular, $\mu$ is defined by $a[\gamma]^{\mu}=1$ for $\gamma \in \Gamma_{P^{\sigma}},$ again using similar notation to \cite{BM}. The series converges for $\nu > \frac{1}{2},$ and has meromorphic continuation in $\nu$ with Laplaican eigenvalue $\bigg(1- (\nu+\mu_1)^2,1- (\nu+\mu_2)^2\bigg).$

There is a Fourier expansion at a cusp $\sigma'$ for $E(P^{\sigma},\nu, i\mu,g)$ similar to the discrete spectrum. We follow \cite{Ku}. 

\begin{prop} Denote by $\mathbf{e}_{\F,R}$ the group of units of $\F$ congruent to $1$ modulo a natural number $R.$ Then for a suitable $R$ there exists a natural number $k_{\sigma'}$ such that \begin{multline} E(P^{\sigma},\nu,i\mu,na[z] k)=F_{0,\sigma,\sigma'}(\nu+i\mu,a[z])+\\ \sum_{r \in t_{\sigma}'} D_{r,\sigma}(P^{\sigma},\nu,i\mu)d_{r,\sigma}(\nu+i\mu)W_{\nu+i\mu,r}(\sqrt{\N(z)}),\end{multline} with 
$$F_{0,\sigma,\sigma'}(s,a[z])=\frac{1}{vol(\Gamma_{N^{\sigma'}}\backslash \Gamma)}\psi_{\sigma,\sigma'}(0,s)\frac{\pi}{s},$$  $$D_{r,\sigma}(P^{\sigma},\nu,i\mu)=\frac{1}{vol(\Gamma_{N^{\sigma'}}\backslash \Gamma)}\psi_{\sigma,\sigma'}(r,\nu+i\mu),$$
and $$\psi_{\sigma,\sigma'}(r,s)=\frac{1}{k_{\sigma}} \sum_{c \in \mathbf{e}_{\F,R}\backslash{}^{\sigma'}\mathcal{C}^{\sigma}} \frac{1}{\N(c)^{s}} \sum_{\left( \begin{array}{cc}
a &b \\ c & d \\ \end{array} \right) \in 
{}^{\sigma'}\mathcal{L}(c)^{\sigma}} \left(\theta\kappa( \begin{array}{cc}
a &b \\ c & d \\ \end{array} )\right)^{-1} e(-Tr(\frac{rd}{c})).$$

Here ${}^{\sigma'}\mathcal{C}^{\sigma}$ and ${}^{\sigma'}\mathcal{L}(c)^{\sigma}$ are defined in the next section, specifically in Definition \ref{coy} and Proposition \ref{coy1}, respectively. 

\end{prop}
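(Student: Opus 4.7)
The plan is to compute the Fourier expansion of $E(P^\sigma,\nu,i\mu,\cdot)$ at the cusp $\sigma'$ by the standard unfold-and-Bruhat technique, keeping careful track of the metaplectic character. First, I would conjugate by $g_\sigma$ and $g_{\sigma'}$ so that both parabolics become the standard upper-triangular parabolic $P$: writing $\gamma = g_{\sigma'}^{-1}\tilde\gamma g_\sigma$ for $\tilde\gamma \in {}^{\sigma'}\Gamma^{\sigma} := g_{\sigma'}\Gamma g_\sigma^{-1}$, and $g = n a[z] k$ with $n \in N^{\sigma'}$, the summand $a_\sigma[\gamma g]^{\rho + 2\nu\rho + i\mu}$ becomes $a[\tilde\gamma\, n' a[z] k']^{\rho+2\nu\rho+i\mu}$ after absorbing $g_{\sigma'}$ into $g$ on the left. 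The Fourier coefficient at $r \in t_{\sigma'}'$ is then
$$
\frac{1}{\operatorname{vol}(\Gamma_{N^{\sigma'}}\backslash N^{\sigma'})}
\int_{\Gamma_{N^{\sigma'}}\backslash N^{\sigma'}}
E(P^\sigma,\nu,i\mu, n a[z]k)\,\overline{\chi_r(n)}\,dn.
$$

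The second step is the Bruhat decomposition of ${}^{\sigma'}\Gamma^{\sigma}$ with respect to the two copies of $P$. Writing $\tilde\gamma = \left(\begin{smallmatrix}a&b\\ c&d\end{smallmatrix}\right)$, the cosets in $\Gamma_{P^\sigma}\backslash\Gamma$, after conjugation, split according to whether $c=0$ or $c\ne 0$. The contribution from $c=0$ (present only when $\sigma$ and $\sigma'$ are equivalent) together with the scattering term from unfolding the orbit of $c\ne 0$ under left multiplication by $N^{\sigma'}$ produces the constant term $F_{0,\sigma,\sigma'}(\nu+i\mu,a[z])$; for $c\ne 0$ one uses $\Gamma_{N^{\sigma'}}\backslash N^{\sigma'}$ to unfold the $x$-integration over all of $N = \mathbb{C}^2$. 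After this unfolding, the coset sum is indexed by a representative $c$ (modulo the action of the unit group $\mathbf{e}_{\F,R}$, which arises from left multiplication by diagonal matrices $b[u]$ from $\Gamma_{P^{\sigma'}}$ that stabilize $c$ up to multiplication), and for each such $c$ by a representative $d$ modulo $c$ coming from $\Gamma_{N^{\sigma}}$-translation on the right. The normalizing factor $k_\sigma$ accounts for any overcounting introduced when passing to these unit-orbit representatives.

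The third step is the standard $SL_2(\C)\times SL_2(\C)$ evaluation of the unfolded integral. Using Iwasawa, for $c\ne 0$ the $a$-part of $\tilde\gamma\, n[x]a[z]$ in each factor of $SL_2(\C)$ is proportional to $z_j / |c_j x_j + d_j c_j^{-1} z_j \cdot 1 + \cdots|$; after a translation absorbing $d/c$, the integral reduces to the product
$$
\prod_{j=1}^{2} \int_{\mathbb{C}} \frac{z_j^{\,1+2\nu+i\mu_j}}{|c_j|^{2+4\nu+2i\mu_j}\,|x_j|^{2+4\nu+2i\mu_j}\text{ (appropriately)}}\,e\!\left(-\operatorname{Tr}_{\mathbb{C}/\mathbb{R}}(r_j x_j)\right)\,dx_j\overline{dx_j},
$$
which is the classical Jacquet integral over $\mathbb{C}$ and evaluates to $K_{\nu+i\mu_j}(4\pi z_j\sqrt{|r_j|})$ multiplied by the normalizing Gamma factor hidden in $d_{r,\sigma}(\nu+i\mu_j)$. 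This produces $d_{r,\sigma}(\nu+i\mu)\,W_{\nu+i\mu,r}(\sqrt{\N(z)})$ as claimed. The arithmetic remainder $|c|^{-2(1+2\nu+i\mu)} e(-\operatorname{Tr}(rd/c))(\theta\kappa)^{-1}(\tilde\gamma)$ from the translation $x \mapsto x + d/c$ is exactly the summand of $\psi_{\sigma,\sigma'}(r,\nu+i\mu)$.

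The main technical obstacle is book-keeping of the Kubota symbol $\kappa$ and the nebentypus $\theta$ through the conjugations by $g_\sigma, g_{\sigma'}$ and through the parametrization of cosets by $(c,d)$ modulo units. One must verify that $(\theta\kappa)^{-1}(\tilde\gamma)$ is well-defined on a coset (so the sum over $d\bmod c$ makes sense) and that it is invariant under the unit action $c\mapsto \varepsilon^2 c$ for $\varepsilon \in \mathbf{e}_{\F,R}$; this is where the congruence modulus $R$ and the choice of $k_\sigma$ are forced upon us, matching Kubota's treatment in \cite{Ku}. Once this is in place, assembling the $c\ne 0$ contributions yields precisely $\psi_{\sigma,\sigma'}(r,\nu+i\mu)/\operatorname{vol}(\Gamma_{N^{\sigma'}}\backslash\Gamma)$ as the $r$-th Fourier coefficient, and the $r=0$ contribution similarly collapses (via the $\int_\mathbb{C}|cx+d|^{-2s}dx$ evaluation giving the factor $\pi/s$) to the stated $F_{0,\sigma,\sigma'}$.
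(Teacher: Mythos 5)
Your proposal reproduces the standard unfold-and-Bruhat derivation of the Fourier expansion of a metaplectic Eisenstein series, which is precisely what Kubota's Theorem 5 and equations (38)--(42) contain; the paper's proof is simply a citation to \cite{Ku}, so you are supplying the content behind that citation. The approach is correct, and the only small slips are that the unit acts on $c$ by $c\mapsto\varepsilon c$ (not $\varepsilon^{2}c$) under left multiplication by $b[\varepsilon]\in\Gamma_{P^{\sigma'}}$, and the displayed Jacquet integrand should carry $\bigl(|c_jx_j+d_j|^2+|c_j|^2 z_j^2\bigr)^{1+2\nu+i\mu_j}$ in the denominator rather than $|x_j|^{2+4\nu+2i\mu_j}$---neither affects the structure or correctness of the argument.
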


\begin{proof}
This follows immediately from Theorem $5$ and equations $(38)-(42)$ of \cite{Ku}.
\end{proof}

One feature of metaplectic forms that is not so readily seen in automorphic forms is that there exists a discrete non-cuspidal metaplectic form. According to Selberg's theory, 
 such forms are residues of Eisenstein series. We will discuss this more in Section \ref{sec:err}.

\section{Bruggeman-Miatello-Kuznetsov Trace Formula over $\F$ with multiple cusps}

In order to state the trace formula we need to define some terms. Let $s_0:=\begin{pmatrix}0&-1 \\ 1&0\end{pmatrix}.$ Then the Bruhat decomposition for $SL_2(\F)$ is a union of $SL_2(\F)\cap P$ and the big cell $C:=(P\cap SL_2(\F))s_0(N\cap SL_2(\F)).$

\begin{definition}\label{coy}
Let $\sigma,\sigma' \in \mathcal{P}.$ We define ${}^{\sigma'}\Gamma^{\sigma}:=\Gamma \cap g_{\sigma'}Cg_{\sigma}^{-1}$ and $${}^{\sigma'}\mathcal{C}^{\sigma}:=\{c \in \F^{*}:g_{\sigma'}^{-1}\gamma g_{\sigma}=\begin{pmatrix}\cdot&\cdot \\ c&\cdot \end{pmatrix} \mbox{ for some } \gamma \in {}^{\sigma'}\Gamma^{\sigma} \}.$$

For each $c \in {}^{\sigma'}\mathcal{C}^{\sigma}$ we put ${}^{\sigma'}\Gamma(c)^{\sigma}:=\{ \gamma \in {}^{\sigma'}\Gamma^{\sigma}: g_{\sigma'}^{-1}\gamma g_{\sigma}=\begin{pmatrix}\cdot&\cdot \\ c&\cdot \end{pmatrix}\}.$
\end{definition}

\begin{prop}\label{coy1}Let $\sigma,\sigma' \in \mathcal{P}.$ For each $c \in {}^{\sigma'}\mathcal{C}^{\sigma}$ there is a finite set ${}^{\sigma'}\mathcal{L}(c)^{\sigma}$ such that $${}^{\sigma'}\Gamma(c)^{\sigma}= \cup_{\gamma \in {}^{\sigma'}\mathcal{L}(c)^{\sigma}} \Gamma_{N^{\sigma'}}\gamma \Gamma_{N^{\sigma}}.$$ The set $${}^{\sigma'}\mathcal{L}^{\sigma}:=\cup_{c \in {}^{\sigma'}\mathcal{C}^{\sigma}} {}^{\sigma'}\mathcal{L}(c)^{\sigma}$$ is set of representatives for $\Gamma_{N^{\sigma'}}\backslash {}^{\sigma'}\Gamma^{\sigma}\slash \Gamma_{N^{\sigma}}.$
\end{prop}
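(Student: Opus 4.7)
The plan is to reduce everything to the Bruhat parameterization. Given $\gamma \in {}^{\sigma'}\Gamma(c)^{\sigma}$, write $g_{\sigma'}^{-1}\gamma g_{\sigma} = \begin{pmatrix} a & b \\ c & d \end{pmatrix}$ with $ad - bc = 1$. First I would compute how the left and right unipotent actions act on this Bruhat form. Conjugating through $g_{\sigma'}$ shows that left multiplication by $g_{\sigma'} n[\xi] g_{\sigma'}^{-1} \in \Gamma_{N^{\sigma'}}$ ($\xi \in t_{\sigma'}$) sends the quadruple $(a,b,c,d)$ to $(a + \xi c,\, b + \xi d,\, c,\, d)$, while right multiplication by $g_{\sigma} n[\xi'] g_{\sigma}^{-1} \in \Gamma_{N^{\sigma}}$ ($\xi' \in t_{\sigma}$) sends it to $(a,\, a\xi' + b,\, c,\, c\xi' + d)$. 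Both actions fix $c$; the left action shifts $a$ by the lattice $c\, t_{\sigma'}$ and the right action shifts $d$ by $c\, t_{\sigma}$, and these two shifts are independent.

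Next, I would parameterize $\Gamma_{N^{\sigma'}} \backslash {}^{\sigma'}\Gamma(c)^{\sigma} / \Gamma_{N^{\sigma}}$ by classes of the pair $(a \bmod c\, t_{\sigma'},\, d \bmod c\, t_{\sigma})$ subject to the unimodularity constraint $ad \equiv 1 \pmod{c\ri}$. Because $t_{\sigma}, t_{\sigma'}$ are full-rank $\ZZ$-lattices in $\F$, the quotients $\ri / c\, t_{\sigma}$ and $\ri / c\, t_{\sigma'}$ are finite, so only finitely many such pairs exist. Choosing one representative $\gamma$ for each admissible pair yields the required finite set ${}^{\sigma'}\mathcal{L}(c)^{\sigma}$ with $${}^{\sigma'}\Gamma(c)^{\sigma}= \bigcup_{\gamma \in {}^{\sigma'}\mathcal{L}(c)^{\sigma}} \Gamma_{N^{\sigma'}}\gamma \Gamma_{N^{\sigma}}.$$

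For the global statement, I would observe that the coordinate $c$ is invariant under both actions, so double cosets arising from distinct $c \in {}^{\sigma'}\mathcal{C}^{\sigma}$ are automatically disjoint. Hence the union $\bigcup_{c}\, {}^{\sigma'}\mathcal{L}(c)^{\sigma}$ is a non-redundant set of representatives for $\Gamma_{N^{\sigma'}}\backslash {}^{\sigma'}\Gamma^{\sigma}/\Gamma_{N^{\sigma}}$.

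The main technical obstacle I anticipate is the careful bookkeeping of the lattices $t_{\sigma}, t_{\sigma'}$ after conjugation by $g_{\sigma}$ and $g_{\sigma'}$: one needs to check that the stabilizer of a fixed Bruhat matrix inside $\Gamma_{N^{\sigma'}} \times \Gamma_{N^{\sigma}}$ is precisely the subgroup of pairs $(\xi,\xi')$ with $\xi c \in c\, t_{\sigma'}$ and $\xi' c \in c\, t_{\sigma}$ trivially realized, and that two matrices with the same class $(a \bmod c\, t_{\sigma'}, d \bmod c\, t_{\sigma})$ really do lie in the same double coset. The latter reduces to verifying that $b = (ad-1)/c$ automatically lies in the appropriate lattice dictated by the congruence conditions cut out by $\Gamma = \Gamma_1(4) \cap \Gamma_0(D)$; this is routine once the Haar measure normalizations from \cite{BM} and \cite{BMP1} are matched.
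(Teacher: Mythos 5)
Your argument is correct and reconstructs the standard double-coset counting that underlies Proposition 2.3.2 of \cite{BMP1}, which the paper simply cites rather than reprove; the Bruhat-coordinate computation of the left and right unipotent actions, the parametrization by $(a \bmod c\,t_{\sigma'},\,d \bmod c\,t_{\sigma})$ subject to $ad\equiv 1$, and the observation that $b$ is then forced by $ad-bc=1$ are exactly the right ingredients. One small imprecision: $a$ and $d$ are entries of $g_{\sigma'}^{-1}\gamma g_{\sigma}$ and therefore need not lie in $\ri$ (only in a fixed fractional ideal determined by $g_{\sigma}$ and $g_{\sigma'}$), so the finiteness should be phrased as the finiteness of $\mathfrak{a}/c\,t_{\sigma'}$ for that fractional ideal $\mathfrak{a}$ rather than of $\ri/c\,t_{\sigma'}$ — but this does not affect the conclusion.
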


\begin{proof}
See \cite{BMP1}.
\end{proof}

\begin{definition}\label{wert}
For $\gamma>0$ and $a=(a_1,a_2)\in \R^2$ with $a_j>6$ define $\mathcal{K}_a(\gamma)$ as the set of functions $k=k_1\times k_2$ with $k_j$ an even holomorphic function on $|\Re \nu| \leq 2\gamma$ satisfying $$k_j(\nu) \ll e^{-|\Im \nu|}(1+|\Im \nu|)^{-a_j}.$$

\end{definition}

\begin{definition} 
Let $\delta$ be a character of $\Gamma.$ For $\sigma,\sigma' \in \mathcal{P},r \in t_{\sigma}', s \in t_{\sigma'}',$ we define the Kloosterman sum $$S^{\sigma,\sigma'}_{\delta}(r,s,c):=\sum_{\left( \begin{array}{cc}
a &b \\ c & d \\ \end{array} \right) \in 
g_{\sigma'}^{-1}{}^{\sigma'}\mathcal{L}(c)^{\sigma}g_{\sigma}} \left(\delta(g_{\sigma'} \begin{pmatrix}
a &b \\ c & d \\ \end{pmatrix} g_{\sigma}^{-1})\right)^{-1} e(-Tr(\frac{rd}{c}+\frac{sa}{c})).$$

\end{definition}

\begin{definition}
For $t \in \C^{*,2}, \nu \in \C^2$ let $$\mathbf{B}(t,\nu):=\prod_{j=1}^2 \frac{I_{\nu_j}(4\pi\sqrt{t_j})I_{\nu_j}(4\pi\overline{\sqrt{t_j}})-I_{-\nu_j}(4\pi\sqrt{t_j})I_{-\nu_j}(4\pi\overline{\sqrt{t_j}})}{\sin \pi \nu_j}.$$
For $k \in \mathcal{K}_a(\psi),$ we define the Bessel transform of $k$ on $\C^{*,2}$ by $$Bk(t):=\prod_{j=1}^2 \frac{i}{2}\int_{\Re(\nu_j)=0} k_j(\nu_j)\mathbf{B}(t_j,\nu_j)\sin(\pi \nu_j) d\nu_j.$$
\end{definition}

To prove Theorem \ref{ftt0}, we will always take the cusps $\sigma,\sigma'$ to be not equal so there will be no ``delta" term standardly seen in such Kuznetsov trace formulas. The most general form of the trace formula with possible equivalent cusps is in the appendix, Section \ref{sec:app}.

For economy, let $$CSC_{r,s}^{\sigma,\sigma'}(k):=\sum_{\zeta \in \mathcal{P}} \sum_{\mu} \int_{\infty}^\infty \overline{D_{r,\sigma}(P^{\zeta},\frac{iy}{2},i\mu)}D_{s,\sigma'}(P^{\zeta},\frac{iy}{2},i\mu)d_{r,\sigma}(\mu)\overline{d_{s,\sigma'}(\mu)}k(\frac{iy}{2}+i\mu)dy,$$ where $\zeta$ is a sum over the cusps and $\mu$ is a lattice depending on $P^{\zeta}.$ 

\begin{theorem}(Bruggeman-Miatello-Kuznetsov Trace Formula)\label{kuk}
Let $\sigma \neq \sigma' \in \mathcal{P},$ with character $\kappa \theta$ of $\Gamma=\Gamma_1(4) \cap \Gamma_0(D).$  For $k \in \mathcal{K}_a(\psi),$  we have
\begin{multline}\label{eq:bmtr078} \sum_{l \geq 1} \overline{c_{r,\sigma}(f_l)}c_{s,\sigma'}(f_l)d_{r,\sigma}(\mu_l)\overline{d_{s,\sigma'}(\mu_l)}k(\mu_l)+CSC_{r,s}^{\sigma,\sigma'}(k)=\\ (\frac{2}{\pi})^2   \sum_{c \in {}^{\sigma'}\mathcal{C}^{\sigma}} S^{\sigma,\sigma'}_{\kappa\theta}(r,s,c) |\N(c)|^{-1}  Bk(\frac{ rs}{c^2}).
\end{multline}

\end{theorem}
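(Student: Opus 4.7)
The plan is to follow the classical approach of comparing two computations of the inner product of two Poincar\'{e} series, but with both series built from distinct cusps $\sigma,\sigma'$ essentially cuspidal for $\theta\kappa$. Specifically, for a suitable test function $\phi:A\to\C$, define
$$P^{\sigma}_{r}(g;\phi)=\sum_{\gamma\in\Gamma_{N^{\sigma}}\backslash\Gamma}\overline{\chi_{r}(n_{\sigma}[\gamma g])\,(\theta\kappa)(\gamma)}\,\phi(a_{\sigma}[\gamma g]),$$
which is absolutely convergent for $\phi$ of rapid enough decay in $y_{1}y_{2}\to 0$. I would first verify the series is well defined and transforms by $\theta\kappa$ on the left, using the hypothesis that $\sigma$ is essentially cuspidal (so $\theta\kappa$ is trivial on $\Gamma_{N^{\sigma}}$).

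The geometric computation of $\langle P^{\sigma}_{r},P^{\sigma'}_{s}\rangle$ proceeds by unfolding one of the Poincar\'{e} series against a fundamental domain for $\Gamma_{N^{\sigma'}}\backslash G$ and then partitioning the remaining sum according to the Bruhat decomposition $SL_{2}(\F)=(P\cap SL_{2}(\F))\sqcup C$. Because $\sigma\neq\sigma'$ the trivial Bruhat cell contributes nothing (no $\gamma\in\Gamma$ sends $\sigma$ to $\sigma'$ via $P$), which is why the ``delta'' term absent from \eqref{eq:bmtr078} drops out. Over the big cell, grouping by the lower-left entry $c\in{}^{\sigma'}\mathcal{C}^{\sigma}$ and using Proposition \ref{coy1} separates the sum into the Kloosterman sums $S^{\sigma,\sigma'}_{\kappa\theta}(r,s,c)$ and an archimedean integral in $n\in N$, $a\in A$ which depends only on $\phi$ and on $rs/c^{2}$. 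Standard manipulations (an explicit Iwasawa coordinate change on the $SL_{2}(\C)\times SL_{2}(\C)$ pieces) express this integral as the Bessel transform of the spectral test function attached to $\phi$, giving the right-hand side of \eqref{eq:bmtr078} up to the identification of the transform.

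The spectral computation uses the decomposition of $L^{2}(\Gamma\backslash G,\theta\kappa)$ into discrete and continuous parts. Against a Hecke--Maass form $f_{l}$, the integral $\langle P^{\sigma}_{r},f_{l}\rangle$ unfolds to a single archimedean integral of $\phi$ against the $r$-th Whittaker coefficient $c_{r,\sigma}(f_{l})d_{r,\sigma}(\mu_{l})W^{r,\mu_{l}}_{\sigma}$; the same holds for $\langle P^{\sigma'}_{s},f_{l}\rangle$. Against Eisenstein series $E(P^{\zeta},\nu,i\mu,\cdot)$ one obtains the analogous integral with $D_{r,\sigma}(P^{\zeta},\nu,i\mu)$. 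Expanding $\langle P^{\sigma}_{r},P^{\sigma'}_{s}\rangle=\sum_{l}\langle P^{\sigma}_{r},f_{l}\rangle\overline{\langle P^{\sigma'}_{s},f_{l}\rangle}+\{\text{continuous}\}$ produces the discrete and $CSC$ contributions of \eqref{eq:bmtr078}, with the archimedean factor taking the shape $k(\mu_{l})$ (respectively $k(iy/2+i\mu)$) once $\phi$ is chosen so that its Mellin--Whittaker transform on $A$ equals $k$.

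The main obstacle is the final step: to start from a prescribed even holomorphic $k\in\mathcal{K}_{a}(\psi)$ rather than a prescribed archimedean $\phi$, and to verify that on the geometric side the resulting integral really equals $Bk(rs/c^{2})$. I would follow the approach of Bruggeman--Miatello: write $\phi$ as an inverse Mellin transform of $k$, substitute into the archimedean integral over $C$, exchange orders of integration (justified by the decay built into $\mathcal{K}_{a}(\psi)$ with $a_{j}>6$), and evaluate the inner integral using the Mellin representation of the product $I_{\nu_{j}}I_{\nu_{j}}-I_{-\nu_{j}}I_{-\nu_{j}}$ that appears in $\mathbf{B}(t,\nu)$; this produces exactly the kernel in Definition on $\mathbf{B}$, and multiplied by $\sin\pi\nu$ to cancel the gamma factors hidden in the Mellin representation gives $Bk$. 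The constant $(2/\pi)^{2}$ and the normalization $|\N(c)|^{-1}$ in \eqref{eq:bmtr078} come from tracking the chosen Haar measures through this computation; the product structure over the two archimedean places means everything factors and reduces to the single-place argument in \cite{BM1}, applied twice. The rest is bookkeeping on normalizations $d_{r,\sigma}(\mu_{l})$, $d_{s,\sigma'}(\mu_{l})$, and $vol(\Gamma_{N^{\sigma}}\backslash N^{\sigma})$.
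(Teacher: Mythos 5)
Your proposal follows the same Bruggeman--Miatello strategy as the paper's appendix (Section \ref{sec:app}): build two Poincar\'e series at the two distinct cusps $\sigma,\sigma'$ with archimedean kernel $\psi=K_r h$, compute $\langle P^r_{\sigma,\psi},P^s_{\sigma',\psi'}\rangle$ once by unfolding against the spectral decomposition of $L^2(\Gamma\backslash G,\theta\kappa)$ to get the discrete plus $CSC$ terms, and once geometrically via the Bruhat decomposition, where the trivial cell $I_1$ vanishes because $\sigma\neq\sigma'$ implies $\Gamma\cap g_{\sigma'}Pg_{\sigma}^{-1}=\emptyset$, and the big cell, grouped by lower-left entries $c\in{}^{\sigma'}\mathcal{C}^{\sigma}$ via Proposition \ref{coy1}, yields the Kloosterman sums times the Bessel transform $Bk(rs/c^2)$. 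Your handling of the passage from a prescribed $k\in\mathcal{K}_a(\psi)$ to the archimedean kernel via Mellin inversion is the same bookkeeping the paper delegates to \cite{BM} (Definition 5.7 and Section 10) and \cite{BMP1}, so the proposal is correct and not meaningfully different in route from what the paper does.
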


\section{The trace formula for explicit cusps}\label{sec:fig}

Let us take for our essential cusps $g_{\sigma'}\cdot \infty=D-1$ with $g_{\sigma'}=\left( \begin{array}{cc}
D-1 & l \\ 1 & m \\ \end{array} \right)$ and $D,l\equiv 1(4), m\equiv 0(4D),$ and $g_{\sigma} \cdot \infty$  with $g_{\sigma}=\left( \begin{array}{cc}
1 & 0 \\ 0 & 1 \\ \end{array} \right).$
We note they are not $\Gamma_0(D)$-equivalent. 

We now unravel the definitions of ${}^{\sigma'}\mathcal{C}^{\sigma}$ and ${}^{\sigma'}\mathcal{L}(c)^{\sigma}$ for our chosen cusps from the Kuznetsov trace formula above. Recall $C$ is the big Bruhat cell and have elements of the form $g=\left( \begin{array}{cc}
a & b \\ c & d \\ \end{array} \right)$ with $c\neq0,a,b,c,d \in \F.$ Assume $g \in g_{\sigma'}^{-1}{}^{\sigma'}\mathcal{L}(c)^{\sigma}g_{\sigma}=g_{\sigma'}^{-1}{}^{\sigma'}\mathcal{L}(c)^{\sigma} .$   We must have then $g_{\sigma'}g \in {}^{\sigma'}\Gamma^{\sigma} \subset \Gamma_0(D)\cap \Gamma_1(4),\mbox{ or }$  $$\left( \begin{array}{cc}
(D-1)a+lc & (D-1)b+ld \\ -a+mc & -b+md \\ \end{array} \right) \in {}^{\sigma'}\Gamma^{\sigma} \subset \Gamma_0(D) \cap \Gamma_1(4).$$
This implies $-a+mc \equiv 0(D)$ which implies $a\equiv 0(D),$ but as $ad-bc=1,$ it also follows $(c,D)=1.$ This implies $a,d\equiv 0(4),c\equiv 1(4),$ and $b\equiv -1(4).$

So it follows, using the extension of $\kappa$ to $\Gamma_2,$ that $$\kappa(g_{\sigma'} \left( \begin{array}{cc}
a & b \\ c & d \\ \end{array} \right))=\kappa(g_{\sigma'})\left(\frac{a}{c}\right)_4=\left(\frac{D}{1}\right)_4 \left(\frac{a}{c}\right)_4=\left(\frac{a}{c}\right)_4.$$

We also have $$\theta(g_{\sigma'} \left( \begin{array}{cc}
a & b \\ c & d \\ \end{array} \right))=\theta(-b+md)=\theta(-b).$$ With $ad-bc=1$ and $a\equiv 0(D),$ we get $-b\equiv \overline{c} (D),$ so $\theta(-b)=\overline{\theta}(c).$

Recall that ${}^{\sigma'}\mathcal{L}(c)^{\sigma}$ is a set of representatives for $\Gamma_{N^{\sigma'}}\backslash {}^{\sigma'}\Gamma(c)^{\sigma} \slash \Gamma_{N^{\sigma}}.$ Further, the map $$(\gamma_{\sigma'},\gamma_{\sigma}) \in \Gamma_{N^{\sigma'}} \times \Gamma_{N^{\sigma}} \to \gamma_{\sigma'}\gamma\gamma_{\sigma}\in \Gamma$$ is one-to-one and the function $\gamma \to c(\gamma)$ is constant on each double class in $\Gamma_{N^{\sigma'}} \backslash \Gamma \slash \Gamma_{N^{\sigma}}.$ Therefore, the elements in the Kloosterman sum, $$\left( \begin{array}{cc}
a & b \\ c & d \\ \end{array} \right) \in g_{\sigma'}^{-1}{}^{\sigma'}\mathcal{L}(c)^{\sigma}g_{\sigma}$$ are parametrized for a fixed $c$ by $d\mod(c).$ It is immediate then that we can rewrite the trace formula in a more explicit form now:

 \begin{theorem}(Bruggeman-Miatello-Kuznetsov Trace Formula (Variant))\label{calk}

Define cusps $g_{\sigma'}\cdot \infty=D-1$ and $g_{\sigma} \cdot \infty$ with $g_{\sigma'}=\left( \begin{array}{cc}
D-1 & l \\ 1 & m \\ \end{array} \right)$ with $D,l\equiv 1(4), m\equiv 0(4D)$ as well as $g_{\sigma}=\left( \begin{array}{cc}
1 & 0 \\ 0 & 1 \\ \end{array} \right).$ Let  $\kappa \overline{\theta}$ be the character of $\Gamma=\Gamma_1(4) \cap \Gamma_0(D).$ For $k \in \mathcal{K}_a(\psi),$ we have
\begin{multline}\label{eq:bmtr078} \sum_{l \geq 1} \overline{c_{r,\sigma}(f_l)}c_{s,\sigma'}(f_l)d_{r,\sigma}(\mu_l)\overline{d_{s,\sigma'}(\mu_l)}k(\mu_l)+CSC_{r,s}^{\sigma',\sigma}(k)=\\ (\frac{2}{\pi})^2 \sum_{\substack{c \in \ri \\ c\equiv 1(4)\\(c,D)=1}} \theta(c) S_{4}(r,s,c) |\N(c)|^{-1}  Bk(\frac{ rs}{c^2})
\end{multline}

with $$S_{4}(r,r',c):=\sum_{ad\equiv 1(c)}(\frac{a}{c})_4 e(Tr(\frac{r'd}{c}+\frac{ra}{c})).$$
\end{theorem}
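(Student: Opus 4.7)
The plan is to deduce Theorem \ref{calk} directly from the general Bruggeman-Miatello-Kuznetsov trace formula (Theorem \ref{kuk}) by inserting the specific cusp data $g_{\sigma'}$, $g_{\sigma}$ and the character $\kappa\overline{\theta}$ of $\Gamma = \Gamma_1(4)\cap\Gamma_0(D)$ into \eqref{eq:bmtr078}. The spectral side of the formula in Theorem \ref{kuk} already appears in the form claimed, and because the chosen cusps are inequivalent under $\Gamma$, the diagonal ``delta'' term that would normally appear when $\sigma = \sigma'$ drops out. The entire content of Theorem \ref{calk} is therefore bookkeeping on the geometric side: identifying the summation index set ${}^{\sigma'}\mathcal{C}^{\sigma}$ and evaluating $\left(\kappa\overline{\theta}(g_{\sigma'}g)\right)^{-1}$ on representatives of ${}^{\sigma'}\mathcal{L}(c)^{\sigma}$.

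First I would invoke the matrix computation carried out in Section \ref{sec:fig} just before the theorem. For $g = \begin{pmatrix}a&b\\c&d\end{pmatrix} \in g_{\sigma'}^{-1}{}^{\sigma'}\mathcal{L}(c)^{\sigma}g_{\sigma}$, the requirement $g_{\sigma'}g \in \Gamma_1(4)\cap\Gamma_0(D)$ forces $a\equiv 0\pmod D$ (hence $(c,D)=1$), together with $a,d\equiv 0\pmod 4$, $c\equiv 1\pmod 4$ and $b\equiv -1\pmod 4$. This identifies ${}^{\sigma'}\mathcal{C}^{\sigma}$ with $\{c \in \ri : c \equiv 1\pmod 4,\,(c,D) = 1\}$, which is exactly the range of summation in the statement. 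By Proposition \ref{coy1}, for each such $c$ the double cosets $\Gamma_{N^{\sigma'}}\backslash{}^{\sigma'}\Gamma(c)^{\sigma}/\Gamma_{N^{\sigma}}$ are parametrized by the single variable $d\pmod c$, with $a$ determined by $ad\equiv 1\pmod c$.

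Second I would evaluate the character. Since $D\equiv 1\pmod 4$ one may choose $g_{\sigma'}\in SL_2(\Z)\subset\Gamma_2$; by the analogue of Patterson's extension argument recalled after \eqref{Kloo:eq:4}, the Kubota symbol $\kappa$ is trivial on $SL_2(\Z)$, and so $\kappa(g_{\sigma'}g)=(a/c)_4$. Since $m\equiv 0\pmod{4D}$, the lower-right entry of $g_{\sigma'}g$ equals $-b+md\equiv -b\pmod D$, and from $ad-bc=1$ with $a\equiv 0\pmod D$ one has $-b\equiv\overline{c}\pmod D$; this reduces $\overline{\theta}(g_{\sigma'}g)$ to a scalar depending only on $c$. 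Thus $(\kappa\overline{\theta}(g_{\sigma'}g))^{-1}$ factors as $\theta(c)$ (pulled out of the inner sum over $d\pmod c$) times the quartic symbol $(a/c)_4$, and the inner sum in the definition of $S^{\sigma',\sigma}_{\kappa\overline{\theta}}(r,s,c)$ becomes $\theta(c) S_4(r,s,c)$ in the notation of the theorem. Substituting into \eqref{eq:bmtr078} of Theorem \ref{kuk} produces the formula stated. The only genuinely nontrivial ingredient is the triviality of the extended $\kappa$ on $SL_2(\Z)$; all remaining steps are routine tracing of definitions.
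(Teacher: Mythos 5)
Your argument follows the same route as the paper's own derivation in Section \ref{sec:fig}: fix the two inequivalent cusps $\sigma, \sigma'$, drop the diagonal term, identify ${}^{\sigma'}\mathcal{C}^{\sigma}$ with $\{c\in\ri: c\equiv 1(4),\,(c,D)=1\}$ and parametrize ${}^{\sigma'}\mathcal{L}(c)^{\sigma}$ by residues $d\bmod c$, evaluate $\kappa\overline{\theta}$ on the representatives, and substitute back into Theorem \ref{kuk}. That bookkeeping does yield $\theta(c)S_4(r,s,c)$ as claimed, with the spectral side unchanged.

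One step is misstated, though: the claim that ``since $D\equiv 1\pmod 4$ one may choose $g_{\sigma'}\in SL_2(\Z)\subset\Gamma_2$'' is not correct, and it is not what the paper does. Here $D,l,m$ lie in $\ri=\Z[\omega_8]$, and $D\equiv 1\pmod 4$ is a congruence in $\ri$, not in $\Z$; in general $D$ is not a rational integer, so $g_{\sigma'}=\begin{pmatrix}D-1 & l\\ \pm 1 & m\end{pmatrix}$ does not lie in $SL_2(\Z)$, and the triviality of the extended $\kappa$ on $SL_2(\Z)$ cannot be applied to it directly. What the paper actually uses is the multiplicativity of the extended $\kappa$ on $\Gamma_2$ together with the observation that the lower-left entry of $g_{\sigma'}$ is a unit, so that $\kappa(g_{\sigma'})=\bigl(\tfrac{D-1}{\pm 1}\bigr)_4=1$ directly from the Kubota formula. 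The conclusion $\kappa(g_{\sigma'}\gamma)=(\tfrac{a}{c})_4$ is right, but the mechanism you give for it should be replaced by the unit-denominator computation; as written it asserts a false containment.
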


\section{Finding the residual Eisenstein series}\label{sec:err}

The next theorem is inspired by Theorem $3.1$ from \cite{LP}.

\begin{theorem}\label{exi}
\begin{enumerate}
Let $D\in \ri $ with $D\equiv 1(4).$ 
\item Let  $L^2_d(\Gamma\backslash G\slash K,\overline{\theta} \kappa )$  denote the space of $f_l$ satisfying $$f_l(\gamma g)=\overline{\theta}(\gamma)\kappa(\gamma) f_l(g)$$ for a Dirichlet character $\theta \mod D,$ and $\gamma \in \Gamma_1(4) \cap \Gamma_0(D).$ 
Then there  exists a $f_l\in L^2_d(\Gamma\backslash G\slash K, \overline{\theta} \kappa )$ that is an eigenform for the Laplace operator $L=\{L_1,L_2\}$ with spectral parameter $\bigg(1-\mu_l^2,1-\mu_l^2\bigg)$ with $\mu_l=\frac{1}{4}$ only if $\overline{\theta}^4$ is trivial modulo $D.$ Label this form with $\overline{\theta}^4 \equiv 1 \mod (D),$ $f_{00}.$ 
\item Let $\alpha$ be the best bound toward the Ramanujan conjecture for automorphic representations over $\F$ (the current best bound is $\alpha=\frac{7}{64}).$
 Suppose that $f_l\neq f_{00}$ is an eigenform for the Laplace operator $L$ with spectral parameter $\bigg(1-\mu_{l,1}^2,1-\mu_{l,2}^2\bigg),$ with either $\mu_{l,j} \in (0,1],$  then we have for $j \in \{1,2\},$ $$\Re(\mu_{l,j}) \leq \frac{\alpha}{4}.$$

\end{enumerate}

\end{theorem}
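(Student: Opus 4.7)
The strategy is to follow Theorem 3.1 of Livn\'e--Patterson \cite{LP}, adapting each step from their cubic cover over $\Q[\omega_3]$ to our quartic cover over $\F = \Q[\omega_8]$.

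For assertion (1), the plan is to construct $f_{00}$ as the residue at $\nu = 1/4$ of the metaplectic Eisenstein series $E(P^{\sigma}, \nu, 0, g)$ attached to $L^2(\Gamma \backslash G, \overline{\theta}\kappa)$. The first step is to locate the poles of $E$ by analyzing its zeroth Fourier coefficient at each essentially cuspidal $\sigma'$, namely the Dirichlet series
\begin{equation*}
\psi_{\sigma,\sigma'}(0,\nu) = \frac{1}{k_\sigma}\sum_{c}\frac{1}{\N(c)^{\nu}}\sum_{\gamma}\bigl(\overline{\theta}\kappa(\gamma)\bigr)^{-1}
\end{equation*}
written out in the preceding section. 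Following the Kubota--Patterson unfolding, I would factor $\psi_{\sigma,\sigma'}(0,\nu)$ by splitting the $c$-sum into a generic piece and a correction supported at primes dividing $4D$. In the quartic setting the generic piece should reduce to a Hecke $L$-function over $\F$ of the shape $L(4\nu - 1/2, \chi)$, where $\chi$ is a Hecke character built from $\overline{\theta}$. This $L$-function admits a pole at $\nu = 1/4$ if and only if $\chi$ is trivial, which is in turn equivalent to $\overline{\theta}^4 \equiv \mathbf{1}(D)$. When that pole is present, taking the residue of $E(P^{\sigma},\nu,0,g)$ at $\nu = 1/4$ produces a non-zero square-integrable form, and this is $f_{00}$. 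Conversely, when $\overline{\theta}^4 \not\equiv \mathbf{1}(D)$, every factor in $\psi_{\sigma,\sigma'}(0,\nu)$ is holomorphic at $\nu = 1/4$, so no residual eigenform can sit at that spectral parameter.

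For assertion (2), the bound $\Re(\mu_{l,j}) \leq \alpha/4$ for any other exceptional $f_l$ follows from the Shimura-type correspondence for the $4$-fold cover of $GL_2$. A genuine metaplectic form on the quartic cover with archimedean parameter $\mu_{l,j}$ at the place $j$ lifts to an automorphic representation on a linear group over $\F$ whose archimedean parameter at $j$ has real part $\Re(4\mu_{l,j})$. Invoking the current best bound $\alpha = 7/64$ toward Ramanujan on the linear side then gives $\Re(4\mu_{l,j}) \leq \alpha$, i.e.\ $\Re(\mu_{l,j}) \leq \alpha/4$. The condition $f_l \neq f_{00}$ is essential because $f_{00}$ itself saturates the exceptional range at $\mu = 1/4$; it arises from the pole of an Eisenstein series, and so is not subject to the tempered-lift bound.

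The main obstacle will be the factorization step in part (1): unravelling $\psi_{\sigma,\sigma'}(0,\nu)$ cleanly while simultaneously tracking the quartic residue symbol $\kappa$ (extended to $\Gamma_2$ by triviality on $SL_2(\Z)$), the nebentypus $\overline{\theta}$ of modulus $D$, and the specific cusps $\sigma,\sigma'$ fixed in Section \ref{sec:fig}. The cubic analogue in \cite{LP} provides a reliable template, but the ramification behaviour at the prime $2$ is genuinely more delicate in the quartic case, which is precisely why the base field had to be enlarged from $\Q[i]$ to $\Q[\omega_8]$ in the first place.
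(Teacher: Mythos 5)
Your high-level plan matches the paper's proof: part (1) locates the residual Eisenstein series by identifying a pole of the zeroth Fourier coefficient $\psi_{\sigma,\sigma'}(0,s)$ at the critical point, and part (2) invokes the Shimura correspondence to convert the best known Ramanujan bound $\alpha=7/64$ on the linear side into $\Re(\mu_{l,j})\le\alpha/4$ on the metaplectic side, which is exactly what the paper does.

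One concrete point where your sketch of part (1) diverges from the paper's mechanism: the paper does not split the $c$-sum into a ``generic piece'' plus a correction at primes dividing $4D$. Instead, the decisive step is an orthogonality-of-characters argument applied to the inner $a$-sum $\sum_{a(c)^*}\overline{\left(\tfrac{a}{c}\right)_4}$: this forces the $c$-sum in $\psi_{\sigma,\sigma'}(0,s)$ to be supported on fourth powers $c=c_0^4$, contributing a factor $\phi(c_0^4)=\N(c_0)^3\phi(c_0)$. After sorting units with characters mod $4$, the resulting Dirichlet series is a \emph{ratio} of Hecke $L$-functions, $L(4s-4,\theta^4)/L(4s-3,\theta^4)$, not a single $L$-function of the shape $L(4\nu-1/2,\chi)$ as you suggest. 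The conclusion is still that a simple pole at $s=5/4$ (i.e.\ $\mu=1/4$ under $s=1+\mu$) appears if and only if $\theta^4$ is trivial mod $D$, so your criterion is right, but the route to it is the orthogonality step you did not name. Since you correctly flagged the factorization as the main obstacle, this is less a gap than an unfinished step, but the generic/ramified split you proposed would not by itself expose the fourth-power support that makes the computation go through.
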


\begin{proof}

Incorporating the above expressions for ${}^{\sigma'}\mathcal{C}^{\sigma}$ and ${}^{\sigma'}\mathcal{L}(c)^{\sigma}$ from Section \ref{sec:fig}, we have  \begin{multline}\label{eq:exes}\psi_{\sigma,\sigma'}(r,s)=\sum_{c \in \mathbf{e}_{\F,R}\backslash{}^{\sigma'}\mathcal{C}^{\sigma}} \frac{1}{\N(c)^{s}} \sum_{\left( \begin{array}{cc}
a &b \\ c & d \\ \end{array} \right) \in {}^{\sigma'}\mathcal{L}(c)^{\sigma}} \left(\overline{\theta}\kappa( \begin{array}{cc} a &b \\ c & d \\ \end{array} )\right)^{-1} e(-Tr(\frac{rd}{c}))=\\  \sum_{\substack{c \in \mathbf{e}_{\F,R}\backslash \ri \\ c\equiv 1(4)\\(c,D)=1}} \frac{1}{\N(c)^{s}}\theta(c) \sum_{a(c)^{*}}\overline{\left(\frac{a}{c}\right)_4} e(-Tr(\frac{r\overline{a}}{c})). \end{multline}

Note the eigenvalue of the Laplace operator of $E(P^{\sigma},s,0,n\left( \begin{array}{cc}
\sqrt{z} & 0\\ 0 & \frac{1}{\sqrt{z}} \\ \end{array} \right)k)$ is $\bigg( (2-s)s, (2-s)s\bigg).$ If we normalize $s=1+i\mu,$ then the eigenvalue is $\bigg(1-\mu^2,1-\mu^2\bigg)$ as in the statement of the proposition. So the goal is to show that the Eisenstein series has a pole in its constant term at $s=\frac{5}{4},$ if $\overline{\theta}^4$ is trivial. This implies the existence of a Residual Eisenstein series. This Residual Eisenstein series would have $\mu=\frac{1}{4}$ completing the proposition.


For $r=0,$ by an orthogonality of characters argument, $\psi_{\sigma,\sigma'}(0,s)$ is non-zero precisely when $c=c_0^4,$ for $c_0 \in \ri.$ This gives $$\psi_{\sigma,\sigma'}(0,s)=\sum_{\substack{c_0 \in \mathbf{e}_{\F,R}\backslash \ri \\ c_0\equiv 1(4)}} \frac{\theta^4(c_0)\phi(c_0^4) }{\N(c_0)^{4s}}.$$

We use characters modulo $4$ and that $\phi(c_0^4)=\N(c_0)^3\phi(c_0)$ to write the sum as $$\psi_{\sigma,\sigma'}(0,s)=\frac{1}{\phi(4)} \sum_{\lambda(4)} \sum_{\substack{c_0 \in \mathbf{e}_{\F,R}\backslash \ri }}\frac{\phi(c_0)\theta^4(c_0)\lambda(c_0)}{\N(c_0)^{4s-3}}.$$ Now take the term with trivial $\lambda=1,$ the sum in $c_0$ is invariant under transformation by a unit so can be written in terms of integral ideals  $$\frac{1}{\phi(4)} \sum_{\epsilon \in \mathbf{e}_{\F,R}\backslash \ri^{*}} \sum_{(c_0) }\frac{\phi(c_0)\theta^4(c_0)}{\N(c_0)^{4s-3}}=\frac{1}{\phi(4)}\sum_{\epsilon \in \mathbf{e}_{\F,R}\backslash \ri^{*}} \frac{L(4s-4,\theta^4)}{L(4s-3,\theta^4)}.$$

The outside sum over the units is finite as $\mathbf{e}_{\F,R}$ is of finite index in $\ri^{*},$ while the inside sum is a ratio of Hecke L-functions which only has a simple pole at $s=\frac{5}{4}$ when $\theta^4$ is trivial. For nontrivial $\lambda$ the same argument gives a Dirichlet series with analytic continuation to the complex plane. Otherwise, there are no poles for the series and therefore no residual Eisenstein series. 

\bigskip
 \indent We now prove the second statement using the Shimura correspondence \cite{F} and the best bound towards the Ramanujan conjecture for a number field \cite{BB}. The latter being an offspring of the result over $\Q$ from \cite{KSa}. Specifically, the Shimura correspondence tells us that the metaplectic forms are in one-to-one correspondence with certain automorphic forms. A metaplectic form on the quartic cover of $GL_2$ with spectral parameter $\bigg(1-\mu_{l,1}^2,1-\mu_{l,2}^2\bigg)$ will correspond via the Shimura correspondence to an automorphic form of spectral parameter $\bigg(1-4\mu_{l,1}^2,1-4\mu_{l,2}^2\bigg).$ This comes directly from the Lemma of section $2.1$ in \cite{F}. The result of \cite{BB} gives that if there exists an exceptional spectral parameter $\mu_{l,1},\mu_{l,2}$ for an automorphic form, then it must satisfy $\Re(\mu_{l,j}) \leq \frac{7}{64}.$ So by the Shimura correspondence, the metaplectic form associated to this automorphic form with exceptional spectral parameter must then satisfy $4\Re (\mu_{l,j}) \leq \frac{7}{64},$ proving the second statement.

\end{proof}

We also now can state our main theorem explicitly defining $K$ from Theorem \ref{ftt0}.

\subsection{The explicit Theorem \ref{ftt0}}\label{sec:mint}

 Consider metaplectic forms in the space $L^2_d(\Gamma \backslash G\slash K, \theta \kappa).$ We see from Theorem \ref{exi} there exists a residual Eisenstein series for this space if $\overline{\theta}^4 \equiv 1 \mod (D).$ So we again denote our theta function as $f_{00}$ with corresponding Fourier coefficients as above.

Denote the Mellin transform over $\R$ of $\Psi$ as $\hat{\Psi}(s),$ specifically $$\hat{\Psi}(s)=\int_0^\infty \Psi(y)y^{s-1}dy.$$ 

        
        Then we have \begin{thm}\label{ftt}{\bf{ Theorem 1.3 (Explicit)}}
Let $\Psi \in C^\infty_0(\R^{+}).$ Suppose $A,B,F,D \in \ri=\Z[\omega_8]$ are such that  $D\equiv 1(4),$ that $B=4B'$ and that any prime $p$ dividing $B'$ also divides $D,$ that  $\frac{B^2}{16A} \in \ri.$ Suppose also that $A,B$ are squares. Let $\theta$ be a Dirichlet character modulo $D.$ Let $\sigma,\sigma'$ denote certain cusps associated to the discrete subgroup $\Gamma_0(D) \cap \Gamma_1(4).$ Let $\alpha$ be the best bound toward the Ramanujan conjecture for automorphic representations over $\F$ (the current best bound is $\alpha=\frac{7}{64}).$ Let $f_{00}$ be the residual Eisenstein series in $L^2_d(\Gamma\backslash G\slash K,\overline{\theta} \kappa ).$  Then for any $\epsilon > \frac{\alpha}{8}$ and recalling $K_{\frac{DB^2}{16A},\frac{B^2}{16A}}$ from Theorem \ref{crrm} we have

\begin{itemize}

\item If $\theta^4 \equiv 1(D),$
\begin{multline}\label{eq:fyi12} \sum_{\substack{c \in \ri\\ c\equiv 1(4)\\(c,D)=1}} \frac{\prod_{j=1}^2\Psi(\frac{\sqrt{X}}{|c^{\eta_j}|^2})}{\N(c)}\theta(c)\sum_{x(c)} e(Tr(\frac{Ax^4+Bx^2+F}{c}))=\\ X^{\frac{1}{2}} \frac{\pi^3}{2\phi(4)} \hat{\Psi}(-\frac{1}{2}) ^2 T_{A,D}(\theta) +\\  
\frac{X^{\frac{1}{4}} \hat{\Psi}(-\frac{1}{4})^2}{g(-\frac{1}{4})^2} (\frac{\pi}{2})^2 \overline{c_{\frac{B^2}{16A},\sigma}(f_{00})}c_{\frac{B^2}{16A},\sigma'}(f_{00})d_{\frac{B^2}{16A},\sigma}(\frac{1}{4})\overline{d_{\frac{B^2}{16A},\sigma'}(\frac{1}{4})} +\\
X^{\frac{1}{4}} \hat{\Psi}(-\frac{1}{4})^2   K_{\frac{DB^2}{16A},\frac{B^2}{16A}}+O(X^{\epsilon}),\end{multline} 

where \begin{equation}
 T_{A,D}(\theta)
 := \left\{ \begin{array}{ll} \N(A) & \text{if } \theta \equiv \mathbf{1}(D); \medskip \\
        0 & \text{if else},\end{array} \right. \end{equation} 

 and $$g(-\frac{1}{4})=[{(4\pi^2)^{1/4}\Gamma(3/4)^2 \N(\frac{B^2}{16A})^{1/8}}]^{-1}.$$

\item else if $\theta^4 \not\equiv 1(D),$ then 
\begin{equation*}\label{eq:fyo0}\sum_{\substack{c \in \ri\\ c\equiv 1(4)}} \frac{\prod_{j=1}^2\Psi(\frac{\sqrt{X}}{|c^{\eta_j}|^2})}{\N(c)}\theta(c)\sum_{x(c)} e(Tr(\frac{Ax^4+Bx^2+F}{c}))= O(X^{\epsilon}).
\end{equation*}

\end{itemize}

\end{thm}

\section{From Kloosterman sums to quartic exponential sums}\label{sec:qkq}


 If we choose $\sigma=\begin{pmatrix} 1 & 0\\ 0& 1 \end{pmatrix}$ then from Section \ref{sec:chcc}  the set $t_{\sigma}'$ equals $\{r \in \F: r \in \mathcal{D}_F^{-1}\}.$ An element of this set can be written as $r=\frac{s}{\delta}$ with $s \in \ri$ and $\delta$ a generator of the different ideal $\mathcal{D}_F.$  We use the shorthand $e(x):=\exp(2\pi i Tr (\frac{x}{\delta}))$ for an additive character in this section. 
 \begin{theorem}\label{c4}
Let $A,B, p \in \ri.$ For $\N(p) \equiv 1(4)$ and $(AB,p)=1$ we have
 \begin{equation} \label{eq:c41}
\sum_{x(p)}e(\frac{Ax^4+Bx^2}{p})-\sum_{x(p)}e(\frac{Ax^2+Bx}{p})=  \sum_{x(p)} (\frac{x}{p})_2 e(\frac{Ax^2+Bx}{p})\end{equation} with $$\sum_{x(p)} (\frac{x}{p})_2 e(\frac{Ax^2+Bx}{p})=(\frac{AB}{p})_2e(\frac{-B^2\overline{8 A}}{p}) S_4(\overline{4^2A}B^2,\overline{4^2A}B^2,p)
.$$
\end{theorem}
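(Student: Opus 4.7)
The plan dispatches the two claims separately.

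\textbf{First identity.} For each $y \in \ri/p$, the fibre of the squaring map has size $1+\bigl(\tfrac{y}{p}\bigr)_{2}$, with the convention $\bigl(\tfrac{0}{p}\bigr)_{2}=0$ giving the single preimage $x=0$ at $y=0$. Substituting $y=x^{2}$ reindexes the quartic sum as
\[
\sum_{x(p)} e\!\left(\tfrac{Ax^{4}+Bx^{2}}{p}\right) \;=\; \sum_{y(p)}\bigl(1+\bigl(\tfrac{y}{p}\bigr)_{2}\bigr)\,e\!\left(\tfrac{Ay^{2}+By}{p}\right),
\]
and the first identity follows by rearrangement.

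\textbf{Second identity.} \emph{Step 1 (reduction to a template).} Since $(AB,p)=1$ and $\N(p)\equiv 1\pmod{4}$ excludes $p\mid 2$, the map $x = \tfrac{B}{2A}(w-1)$ is a bijection of $\ri/p$. Expanding the exponent and using that $2=(\omega_{8}+\omega_{8}^{-1})^{2}$ is a global square in $\F$—so $\bigl(\tfrac{2}{p}\bigr)_{2}=1$ at every prime—gives
\[
\sum_{x(p)}\bigl(\tfrac{x}{p}\bigr)_{2}\,e\!\left(\tfrac{Ax^{2}+Bx}{p}\right) \;=\; \bigl(\tfrac{AB}{p}\bigr)_{2}\,e\!\left(\tfrac{-B^{2}\overline{4A}}{p}\right)T(\alpha),\qquad \alpha := B^{2}\overline{4A},
\]
where $T(\alpha) := \sum_{w(p)}\bigl(\tfrac{w-1}{p}\bigr)_{2}\,e(\alpha w^{2}/p)$. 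A direct check of phase factors shows the theorem's claim is equivalent to
\[T(\alpha) \;=\; e(\alpha/(2p))\,S_{4}(\alpha/4,\alpha/4,p).\]

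\emph{Step 2 (symmetrisation of $S_{4}$).} The involution $u\mapsto u^{-1}$ on $(\ri/p)^{*}$ gives $2S_{4}(N,N,p)=\sum_{u}\bigl[\bigl(\tfrac{u}{p}\bigr)_{4}+\bigl(\tfrac{u}{p}\bigr)_{4}^{-1}\bigr]e(N(u+u^{-1})/p)$. The bracketed factor vanishes on non-squares and equals $2\bigl(\tfrac{v}{p}\bigr)_{2}$ on squares $u=v^{2}$, yielding
\[
S_{4}(N,N,p) \;=\; \tfrac{1}{2}\sum_{v(p)^{*}}\bigl(\tfrac{v}{p}\bigr)_{2}\,e\!\left(\tfrac{N(v^{2}+v^{-2})}{p}\right).
\]
Using $v^{2}+v^{-2}=(v+v^{-1})^{2}-2$ converts the target identity into
\[
T(\alpha) \;=\; \tfrac{1}{2}\sum_{v(p)^{*}}\bigl(\tfrac{v}{p}\bigr)_{2}\,e\!\left(\tfrac{\alpha\bigl((v+v^{-1})/2\bigr)^{2}}{p}\right).\qquad(\star)
\]

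\emph{Step 3 (matching via $w=(v+v^{-1})/2$).} For each $w\in\ri/p$, the equation $v^{2}-2wv+1=0$ has $0$, $1$, or $2$ solutions in $(\ri/p)^{*}$ according as $w^{2}-1$ is a non-square, zero, or a non-zero square. The algebraic identity $w-1=(v-1)^{2}/(2v)$, combined with $\bigl(\tfrac{2}{p}\bigr)_{2}=1$, yields the collapse $\bigl(\tfrac{v}{p}\bigr)_{2}=\bigl(\tfrac{w-1}{p}\bigr)_{2}$ at every existing solution. Carrying out the substitution in $(\star)$ reproduces exactly the part of $T(\alpha)$ supported on $\{w:w^{2}-1\text{ is a non-zero square}\}$ together with the $w=\pm 1$ boundary contribution $e(\alpha/p)$. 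The difference, namely $\sum_{w:\,w^{2}-1\text{ non-square}}\bigl(\tfrac{w-1}{p}\bigr)_{2}\,e(\alpha w^{2}/p)$, vanishes: for such $w$ one has $\bigl(\tfrac{w-1}{p}\bigr)_{2}=-\bigl(\tfrac{w+1}{p}\bigr)_{2}$, and the involution $w\mapsto -w$—legitimate because $\bigl(\tfrac{-1}{p}\bigr)_{2}=1$ as $\N(p)\equiv 1\pmod{4}$—sends this partial sum to its own negative.

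\textbf{Main obstacle.} The delicate step is Step~3: proving that after the two substitutions $u=v^{2}$ and $w=(v+v^{-1})/2$, the quartic Kloosterman sum accounts precisely for the $w$ with $w^{2}-1$ a non-zero square, while the remaining $w$-contributions cancel by the $w\mapsto-w$ symmetry. This is the step where the global arithmetic of $\F=\Q(\omega_{8})$—namely that $2$ and $-1$ are both global squares, so $\bigl(\tfrac{2}{p}\bigr)_{2}=\bigl(\tfrac{-1}{p}\bigr)_{2}=1$ uniformly in $p$—is used in an essential way.
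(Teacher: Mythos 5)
Your proof is correct, but it takes a genuinely different route from the paper's. The paper proves the second identity by computing the multiplicative Fourier transform in $A$ of $\sum_x (\tfrac{x}{p})_2 e(\tfrac{Ax^2+Bx}{p})$, expressing the result in Gauss sums, applying the Hasse--Davenport duplication relation to $\tau(\chi^2\eta)$, and then inverting by orthogonality and completing the square (Lemma \ref{hfm}); it is a Gauss-sum manipulation in the style of Duke--Iwaniec. You instead diagonalize the quadratic on the left by the affine substitution $x=\tfrac{B}{2A}(w-1)$, fold $S_4$ under $u\mapsto u^{-1}$ and reparametrize by $u=v^2$, $w=(v+v^{-1})/2$, so that both sides become sums over a common $w$-variable; the leftover piece supported on non-split $w$ (i.e.\ $w^2-1$ a non-square) then cancels under $w\mapsto -w$. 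This is a more elementary, purely combinatorial bijection-and-cancellation argument, and it isolates cleanly where the arithmetic of $\F=\Q(\omega_8)$ enters, namely that $-1=i^2$ and $2=(\omega_8+\omega_8^{-1})^2$ are global squares so $(\tfrac{-1}{p})_2=(\tfrac{2}{p})_2=1$ uniformly. (The paper uses the same two facts, but embedded in the Gauss-sum bookkeeping.) The paper's Fourier-analytic route is more readily generalized --- it is what produces Proposition \ref{cnn} for $\Q(\omega_{2n})$ --- whereas your argument is tailored to the quadratic/quartic case. Two minor points of imprecision that do not affect correctness: the collapse $(\tfrac{v}{p})_2=(\tfrac{w-1}{p})_2$ fails at $v=1$ (where the right side is $0$, not $1$), though you handle the $w=\pm1$ boundary separately so the accounting closes; and $(\tfrac{-1}{p})_2=1$ is most simply because $-1$ is a global square in $\ri$, rather than via the residue-count condition $\N(p)\equiv 1\pmod 4$, though both reasons are valid here.
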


Before we proof the theorem we require a lemma on quadratic exponential sums.
\begin{lemma}\label{hfm}
Suppose $m,n,r,p \in \ri$ with  $(p,2)= (m,p)=1$ and $p$ prime. Then we have  
$$\sum_{x(p)} e(\frac{mx^2+nx+r}{p})= (\frac{m}{p})_2 e(\frac{r}{p}) e(\frac{-n^2\overline{4m}}{p}) \sqrt{\N(p)} .$$

\end{lemma}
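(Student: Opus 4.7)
The plan is to complete the square on the quadratic polynomial modulo $p$ and then reduce to a standard quadratic Gauss sum over the residue field $\ri/p$. Since $(p,2)=(m,p)=1$ and $p$ is prime, both $2$ and $m$ are invertible in $\ri/p$, so one can form the inverses $\overline{2m}$ and $\overline{4m}$ modulo $p$ and write
$$mx^2 + nx + r \equiv m(x + \overline{2m}n)^2 + r - \overline{4m}n^2 \pmod{p}.$$
Translating the summation variable by $y = x + \overline{2m}n$, which is a bijection of $\ri/p$, yields
$$\sum_{x(p)} e\!\left(\frac{mx^2+nx+r}{p}\right) = e\!\left(\frac{r}{p}\right)e\!\left(\frac{-n^2\overline{4m}}{p}\right) \sum_{y(p)} e\!\left(\frac{my^2}{p}\right).$$

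The remaining task is to evaluate $G_m(p):=\sum_{y(p)} e(my^2/p)$. Using that in $\ri/p$ each nonzero $y$ is attained as a square exactly $1+(\tfrac{y}{p})_2$ times while $y=0$ is attained once, I would rewrite
$$G_m(p) = \sum_{y(p)} \left(\tfrac{y}{p}\right)_2 e\!\left(\tfrac{my}{p}\right),$$
the trivial-character part collapsing to zero. The substitution $y \mapsto \overline{m}y$ extracts the factor $(\tfrac{m}{p})_2$ and reduces the expression to the classical quadratic Gauss sum $\tau_2(p) := \sum_{y(p)}(\tfrac{y}{p})_2 e(y/p)$ over the finite field $\ri/p$. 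Invoking the standard evaluation $\tau_2(p)=\sqrt{\N(p)}$ then gives the claimed formula.

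The only delicate point is the sign/phase in $\tau_2(p)=\sqrt{\N(p)}$: over a general number field with additive character defined via $Tr(\cdot/\delta)$, the Gauss sum has absolute value $\sqrt{\N(p)}$ automatically, but pinning down the sign requires either reducing via the Davenport--Hasse relation to the rational Gauss sum or appealing to the conventions implicit in the definition of $(\tfrac{\cdot}{p})_2$ and $e(\cdot)$ used in the paper. Given that $\F=\Q[\omega_8]$ contains $i$ and $\sqrt{2}$, the square root $\sqrt{\N(p)}$ can be taken consistently, and I would treat the identification $\tau_2(p)=\sqrt{\N(p)}$ as the author's normalization; no deeper input is needed beyond the bijectivity of the translation and the squaring-count identity.
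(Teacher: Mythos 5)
Your argument mirrors the paper's proof exactly in structure: complete the square and translate, rewrite $\sum_y e(my^2/p)$ as a quadratic-character Gauss sum via the squaring-count identity (the "trivial part" vanishing because $(m,p)=1$), pull out $\left(\frac{m}{p}\right)_2$ by rescaling, and evaluate the remaining Gauss sum $\tau_p$. Your caveat about the sign is well observed but is not a gap relative to the paper, which handles this step by noting $\tau_p^2=\left(\frac{-1}{p}\right)_2\N(p)$ and $-1=i^2\in\ri^{*2}$, which likewise determines $\tau_p$ only up to a sign, so both arguments lean on the normalization convention at that point.
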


\begin{proof}
 By completing the square and a change of variables for any integer $c, (c,2)=1$ we have $$\sum_{x(c)} e(\frac{mx^2+nx+r}{c})=e(\frac{r}{c}) e(\frac{-n^2\overline{4m}}{c}) \sum_{x(c)} e(\frac{mx^2}{c}).$$  
 
 The rest of the proof we study $$ \sum_{x(c)} e(\frac{mx^2}{c})$$ for $c=p$ prime. Following Theorem 155 of \cite{Hec}
we have $$\sum_{x(p)} e(\frac{mx^2}{p})=(\frac{m}{p})_2\sum_{x(p)} e(\frac{x^2}{p})$$ as $(m,p)=1.$

Using characters of order $2$ $$\sum_{x(p)} e(\frac{x^2}{p})=\sum_{x(p)} (\frac{x}{p})_2 e(\frac{x}{p})=:\tau_p.$$ By looking at $\tau_p^2$ and a standard change of variables in the Gauss sum we get $\tau_p=(\frac{-1}{p})_2 \N(p).$ As $-1=i^2$ the quadratic character equals one and we are done.

\end{proof}

\begin{proof}\{Theorem \ref{c4}\}
To simplify notation in the proof we write $\eta$ as the quadratic character and $\rho$ as the quartic residue character modulo $p.$ 

First we investigate the multiplicative Fourier transform of the exponential sum in question, 

\begin{equation}
\sum_{A(p)}\overline{\chi(A)}\left[ \sum_{x(p)}  \eta(x) e(\frac{Ax^2+Bx}{p})\right].
\end{equation}

Assume $\chi =\mathbf{1}(p),$ by a change of variables $A \to A \overline{x^2},$ the sum in $x$ is zero as $(B,p)=1.$  

Now assume $\chi \not\equiv \mathbf{1}.$ By a change of variables $A \to A \overline{x^2}, x \to \overline{B}x$ this equals $$\chi(\overline{B^2})\eta(B)\tau(\chi^2 \eta)\tau(\overline{\chi}).$$

So by Fourier inversion \begin{equation}\label{eq:vi}\sum_{x(p)}  \eta(x) e(\frac{Ax^2+Bx}{p})= \eta(B) \frac{1}{\phi(p)} \sum_{\chi(p)} \chi(\overline{B^2}) \tau(\chi^2 \eta)\tau(\overline{\chi})\chi(A). \end{equation} 
The Hasse-Davenport relation gives the identity  \begin{equation}\label{eq:hd}
\tau(\chi^n)=\frac{-\chi(n^n) \prod_{l(n)} \tau(\chi \gamma^{l})}{\prod_{l(n)} \tau(\gamma^l)}
\end{equation} for $\chi \mod(p),$ and where $\gamma$ is the $n$-th residue character.

Writing the term $\tau(\chi^2 \eta)=\tau((\chi \rho)^2)=\tau((\chi \overline{\rho})^2),$ and using the Hasse-Davenport relation we can write 

$$\tau((\chi \rho)^2)=\frac{\chi(4) \tau(\chi \overline{\rho})\tau(\chi \rho)\tau(\eta)\eta(-1)}{\N(p)}.$$ This equality can be reached by using the equalities $\eta \rho=\overline{\rho}$ and $\overline{\tau(\eta)}=\eta(-1)\tau(\eta).$

So \eqref{eq:vi} equals $$ \frac{\eta(B)\tau(\eta)\eta(-1)}{\N(p)\phi(p)} \sum_{\chi(p)} \chi(\overline{B^2}) \chi(A) \left[\chi(4) \tau(\chi \overline{\rho})\tau(\chi \rho)\tau(\overline{\chi})\right].$$

Opening all the Gauss sums and rearranging sums this equals $$\frac{\eta(B)\tau(\eta)\eta(-1)}{\N(p)\phi(p)} \sum_{a(p)} \overline{\rho(a)}e(\frac{a}{p}) \sum_{b(p)} \rho(b)e(\frac{b}{p}) \sum_{c(p)} e(\frac{c}{p})\sum_{\chi(p)} \chi(4ab\overline{cB^2}A).$$

Using orthogonality of characters this equals 
$$\frac{\eta(B)\tau(\eta)\eta(-1)}{\N(p)} \sum_{\substack{a,b,c(p)\\ 4abA\equiv B^2c(p)}} \overline{\rho(a)}e(\frac{a}{p})  \rho(b)e(\frac{b}{p})  e(\frac{c}{p}).$$

We can rewrite this as $$\frac{\eta(B)\tau(\eta)\eta(-1)}{\N(p)} \sum_{a,b(p)} \rho(a) \overline{\rho(b)} e(\frac{a+b+4ab\overline{B^2}A}{p}).$$

A change of variables $b \to ba$ gives \begin{equation}\label{eq:cf}\frac{\eta(B)\tau(\eta)\eta(-1)}{\N(p)} \sum_{b(p)}\overline{\rho(b)}\sum_{a(p)} e(\frac{4bA\overline{B^2}a^2+(b+1)a}{p}).\end{equation}

 

 Using Lemma \ref{hfm} for \eqref{eq:cf} we get \begin{multline}\frac{\epsilon_p\sqrt{\N(p)}\eta(AB)\tau(\eta)\eta(-1)}{\N(p)} \sum_{b(p)}\eta(b)\overline{\rho}(b)e(\frac{-\overline{4^2 b A}B^2(b+1)^2}{p})=\\ \frac{\epsilon_p\sqrt{\N(p)}\eta(AB)\tau(\eta)\eta(-1)}{\N(p)} e(\frac{-B^2\overline{8 A}}{p})\sum_{b(p)} \rho(b) e(\frac{\overline{4^2A}B^2(b+\overline{b})}{p}).\end{multline}


As $\tau(\eta)=\epsilon_p \sqrt{\N(p)}$ and $\epsilon_p^2 \eta(-1)=1,$ we finally have $$\sum_{x(p)}  \eta(x) e(\frac{Ax^2+Bx}{p})=
\eta(AB) e(\frac{-B^2\overline{8 A}}{p})\sum_{b(p)} \rho(b) e(\frac{\overline{4^2A}B^2(b+\overline{b})}{p}).$$
\end{proof}

\subsection{Prime powers}\label{ppoww}


Recalling the notation of $$S_{4}(a,b,c)=\sum_{x(c)^{*}} (\frac{x}{c})_4 e(\frac{ax+b\overline{x}}{c})$$ from Theorem \ref{calk} we have the following proposition and lemma.

\begin{prop}\label{klo}
Let $a,b,p \in \ri.$ For $m>1,m \in \Z$  and $\N(p)\equiv 1(4)$ prime, \begin{equation}S_{4}(a,b,p^{2m})=\N(p)^{m}\sum_{\substack{u(p^{2m})\\u^2 \equiv b \overline{a}(p^{2m})}} (\frac{u}{p^{2m}})_4 e(\frac{2au}{p^{2m}}).
\end{equation}

\begin{equation}
S_{4}(a,b,p^{2m+1})= \N(p)^{m+1/2} (\frac{a}{p})_2\sum_{\substack{u(p^{2m+1})\\u^2 \equiv b \overline{a}(p^{2m+1})}} (\frac{u}{p^{2m+1}})_4 e(\frac{2au}{p^{2m+1}})
\end{equation}
 \end{prop}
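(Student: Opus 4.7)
The plan is to evaluate these twisted Kloosterman sums at prime powers by the $p$-adic stationary phase method in its multiplicative form. The argument splits naturally into even and odd exponent cases, and in both the basic move is to factor the $x$-sum according to a lift-and-perturb decomposition adapted to the level.

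For the even power $p^{2m}$, I would parametrize $x = x_0(1 + p^m y)$ with $x_0$ ranging over a chosen lift to $\ri/p^{2m}$ of $(\ri/p^m)^\ast$ and $y$ over $\ri/p^m$. Expanding $\bar x$ modulo $p^{2m}$ the terms past linear in $y$ vanish, so $ax + b\bar x \equiv (ax_0 + b\bar{x_0}) + p^m y(ax_0 - b\bar{x_0}) \pmod{p^{2m}}$. Because $\N(p)\equiv 1 \pmod 4$, $4$ is a unit modulo $p$, and Hensel's lemma produces $z$ with $z^4 \equiv 1 + p^m y \pmod{p^{2m}}$; hence $(\tfrac{1+p^m y}{p^{2m}})_4 = 1$ and $(\tfrac{x}{p^{2m}})_4 = (\tfrac{x_0}{p^{2m}})_4$. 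The $y$-sum then collapses to $\N(p)^m$ times the indicator $[ax_0^2 \equiv b \pmod{p^m}]$. Choosing the $x_0$'s as the Hensel lifts of the mod-$p^m$ solutions to genuine mod-$p^{2m}$ solutions $u$ of $au^2 \equiv b$ gives $ax_0 + b\bar{x_0} \equiv 2au \pmod{p^{2m}}$, which is exactly the stated formula.

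For the odd power $p^{2m+1}$ the analogous parametrization is $x = x_0(1 + p^{m+1}y)$ with $x_0 \pmod{p^{m+1}}$ and $y \pmod{p^m}$; again $1 + p^{m+1}y$ is a fourth power mod $p^{2m+1}$, and the $y$-sum forces $ax_0^2 \equiv b \pmod{p^m}$, yielding
\[
S_4(a,b,p^{2m+1}) = \N(p)^m \sum_{\substack{x_0(p^{m+1})^\ast \\ ax_0^2 \equiv b (p^m)}} \left(\tfrac{x_0}{p^{2m+1}}\right)_4 e\!\left(\tfrac{ax_0 + b\bar{x_0}}{p^{2m+1}}\right).
\]
I would then refine by writing $x_0 = \tilde u + p^m w$ with $\tilde u$ a Hensel lift of a mod-$p^m$ solution to a mod-$p^{2m+1}$ solution of $au^2 \equiv b$, and $w \in \ri/p$. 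Crucially the expansion of $\bar{x_0}$ mod $p^{2m+1}$ must now retain the quadratic term $p^{2m}w^2 \bar{\tilde u}^3$, but the identities $b\bar{\tilde u}^2 \equiv a$ and $b\bar{\tilde u}^3 \equiv a\bar{\tilde u} \pmod{p^{2m+1}}$ simplify the exponent to $2a\tilde u + p^{2m}aw^2 \bar{\tilde u}$. The residual $w$-sum is thus a quadratic Gauss sum over $\ri/p$, evaluated by Lemma \ref{hfm} to $(\tfrac{a\bar{\tilde u}}{p})_2 \sqrt{\N(p)}$, producing precisely the prefactor $\N(p)^{m+1/2}(\tfrac{a}{p})_2$.

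The main obstacle is the bookkeeping in the odd case: one has to reconcile the quartic character $(\tfrac{x_0}{p^{2m+1}})_4$ evaluated at the refined lift $\tilde u + p^m w$ with the quadratic character $(\tfrac{\bar{\tilde u}}{p})_2$ coming from the Gauss sum, and show that after summing over both square roots of $b\bar a$ the combined contribution matches the single quartic symbol $(\tfrac{u}{p^{2m+1}})_4$ appearing in the statement. This relies on the paper's convention for $(\tfrac{\cdot}{p^k})_4$ at prime powers together with the fact that in $\F = \Q[\omega_8]$ the element $-1$ is a fourth power, so no spurious signs appear when the two solutions $\pm \tilde u$ are combined.
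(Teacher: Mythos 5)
The paper does not actually prove Proposition~\ref{klo}: it cites Patterson's paper \cite{P1}. So you are supplying a self-contained argument where the paper gives none, which is a welcome and genuinely different route. Your method --- the $p$-adic stationary phase with a level-$m$ (resp.\ level-$(m+1)$) lift-and-perturb decomposition, killing the quartic character on the perturbation because it has conductor $p$ --- is the standard and correct way to attack such identities, and your even-power argument is complete and sound.

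The odd-power case, however, has a real gap, and it is precisely the point you flag as ``bookkeeping'' and then wave away. Carrying your own computation through to the end: the Gauss sum produces $\bigl(\tfrac{a\bar{\tilde u}}{p}\bigr)_2\sqrt{\N(p)}=\bigl(\tfrac{a}{p}\bigr)_2\bigl(\tfrac{\tilde u}{p}\bigr)_2\sqrt{\N(p)}$, so the contribution of a critical point $u$ is
\[
\N(p)^{m+\frac12}\Bigl(\tfrac{a}{p}\Bigr)_2\;\Bigl(\tfrac{u}{p^{2m+1}}\Bigr)_4\Bigl(\tfrac{u}{p}\Bigr)_2\;e\!\Bigl(\tfrac{2au}{p^{2m+1}}\Bigr),
\]
i.e.\ with an \emph{extra} quadratic factor $\bigl(\tfrac{u}{p}\bigr)_2$ relative to the stated formula. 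Your proposed fix --- ``sum over both square roots $\pm u$ and use that $-1$ is a fourth power'' --- does not eliminate it: since $\bigl(\tfrac{-1}{p}\bigr)_2=1$ in $\Q[\omega_8]$, the two critical points carry the \emph{same} factor $\bigl(\tfrac{u}{p}\bigr)_2$, which simply factors out of the two-term sum and does not cancel. Writing $\rho=\bigl(\tfrac{\cdot}{p}\bigr)_4$, the computed character is $\rho(u)^{2m+1}\rho(u)^2=\rho(u)^{2m+3}=\overline{\rho(u)^{2m+1}}=\overline{\bigl(\tfrac{u}{p^{2m+1}}\bigr)_4}$, which is the complex conjugate of what the statement asserts; no convention for $\bigl(\tfrac{\cdot}{p^k}\bigr)_4$ at prime powers absorbs this, since it is an honest $\rho(u)^2$ discrepancy, not a choice of exponent. (In the only place the paper uses the odd case, Proposition~\ref{pow}, one has $a=b$ and hence $u=\pm1$, so $\bigl(\tfrac{u}{p}\bigr)_2=1$ and the discrepancy is invisible.) To close this gap you must either show directly why this quadratic factor is identically $1$ on the critical locus --- which it is not in general, since $u$ is a square root of $b\bar a$ and need not itself be a square mod $p$ --- or accept that the correct odd-power formula carries $\overline{\bigl(\tfrac{u}{p^{2m+1}}\bigr)_4}$ (equivalently an additional $\bigl(\tfrac{u}{p}\bigr)_2$) and reconcile this with the convention used in \cite{P1}.
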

 
 \begin{proof}
 See \cite{P1}.
 \end{proof}

\begin{lemma}\label{hfm1}
Suppose $m,n,r,c \in \ri$ with  $(c,2)= (m,c)=1$ and $c\equiv 1(4)$ . Then we have  

$$\sum_{x(c)} e(\frac{mx^2+nx+r}{c})= (\frac{m}{c})_2 e(\frac{r}{c}) e(\frac{-n^2\overline{4m}}{c}) \sqrt{\N(c)} .$$

\end{lemma}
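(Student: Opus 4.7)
The plan is to reduce Lemma \ref{hfm1} to the prime case (Lemma \ref{hfm}) via completing the square and multiplicativity of Gauss sums under the Chinese Remainder Theorem. First, since $(c,2m)=1$, the element $4m$ is invertible modulo $c$, so completing the square and applying the change of variable $x \mapsto x - \overline{2m}n$ yields
$$\sum_{x(c)} e\!\left(\frac{mx^2+nx+r}{c}\right) = e\!\left(\frac{r}{c}\right) e\!\left(\frac{-n^2\overline{4m}}{c}\right) \sum_{x(c)} e\!\left(\frac{mx^2}{c}\right),$$
so it suffices to prove $G_m(c) := \sum_{x(c)} e(mx^2/c) = \left(\tfrac{m}{c}\right)_{\!2}\sqrt{\N(c)}$ for $c\equiv 1(4)$ with $(c,2m)=1$.

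Next I would establish multiplicativity: if $c=c_1c_2$ with $(c_1,c_2)=1$, the substitution $x = c_2 y_1 + c_1 y_2$ with $y_i$ ranging over residues modulo $c_i$, together with the CRT identity $\tfrac{1}{c_1c_2} \equiv \tfrac{\overline{c_2}}{c_1} + \tfrac{\overline{c_1}}{c_2} \pmod{\ri}$ inside the additive character, gives $G_m(c_1c_2) = G_{m\overline{c_2}}(c_1)\, G_{m\overline{c_1}}(c_2)$. By induction on the number of prime-power factors this reduces everything to the prime-power case $c=p^k$.

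For $k=1$ the evaluation is exactly Lemma \ref{hfm}, noting that $(-1/p)_2 = 1$ since $-1=i^2 \in \F$. For $k \geq 2$ I would write $x = x_0 + p^{\lceil k/2\rceil} y$, with $x_0$ running over residues modulo $p^{\lceil k/2\rceil}$ and $y$ over residues modulo $p^{\lfloor k/2\rfloor}$; expanding $(x_0+p^{\lceil k/2 \rceil}y)^2$ and summing over $y$ forces the congruence $2mx_0 \equiv 0 \pmod{p^{\lfloor k/2\rfloor}}$, which since $(p,2m)=1$ collapses the sum to the diagonal. A direct count then yields $G_m(p^{2j}) = \N(p)^j$, while for $k=2j+1$ the remaining stationary residues fibre over a Gauss sum modulo $p$, giving $G_m(p^{2j+1}) = \left(\tfrac{m}{p}\right)_{\!2} \N(p)^j\sqrt{\N(p)}$. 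In both cases this equals $\left(\tfrac{m}{p^k}\right)_{\!2}\sqrt{\N(p^k)}$.

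Reassembling via the multiplicativity of both $\sqrt{\N(\cdot)}$ and the quadratic residue symbol $\left(\tfrac{m}{\cdot}\right)_{\!2}$ yields $G_m(c)=\left(\tfrac{m}{c}\right)_{\!2}\sqrt{\N(c)}$, and combining with the completed-square reduction completes the proof. The step I expect to be the main obstacle is bookkeeping the unit factors in the prime-power evaluation: I need to verify that the hypothesis $c\equiv 1\pmod 4$ is exactly what kills any residual fourth root of unity arising from the gluing of prime-power Gauss sums (the analogue of the classical fact over $\Z$ that the quadratic Jacobi symbol gives the right sign of the Gauss sum only when $c\equiv 1 \pmod 4$). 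Once that is done, the identity follows cleanly.
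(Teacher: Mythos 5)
Your proposal takes a genuinely different route from the paper. The paper's proof is very short: after the completing-the-square reduction (which both of you do the same way), it cites Hecke's Theorem~163 (the reciprocity formula for Hecke Gauss sums) to write $\sum_{x(c)} e(x^2/c) = \sqrt{\N(c)}\cdot\N(8)^{-1}\sum_{x(4)} e(-cx^2/4)$, and then uses $c\equiv 1\,(4)$ to evaluate the residual sum mod $4$ as $\sqrt{\N(8)}$. That is the whole argument. Your route instead decomposes $c$ into prime powers via CRT, evaluates each local piece, and reassembles. The local evaluations are correct: the substitution $x=x_0+p^{\lceil k/2\rceil}y$ with the vanishing of the $y$-sum off the locus $x_0\equiv 0\,(p^{\lfloor k/2\rfloor})$ does give $G_m(p^{2j})=\N(p)^j$ and $G_m(p^{2j+1})=(\tfrac{m}{p})_2\N(p)^{j}\sqrt{\N(p)}$.

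The genuine gap is in the reassembly. The twisted multiplicativity you derive is $G_m(c_1c_2)=G_{mc_2}(c_1)\,G_{mc_1}(c_2)$, and feeding the prime-power formula into this produces
$$G_m(c_1c_2)=\left(\frac{m}{c_1c_2}\right)_{\!2}\left(\frac{c_1}{c_2}\right)_{\!2}\left(\frac{c_2}{c_1}\right)_{\!2}\sqrt{\N(c_1c_2)}.$$
The extraneous factor $(\tfrac{c_1}{c_2})_2(\tfrac{c_2}{c_1})_2$ is not a bookkeeping nuisance one can defer; it is precisely the dyadic Hilbert-symbol defect in the quadratic reciprocity law for $\Q(\omega_8)$. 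You acknowledge that ``$c\equiv 1\,(4)$ is exactly what kills the residual root of unity,'' but this is not immediate from $c\equiv 1\,(4)$: the Hilbert symbol $(c_1,c_2)_{\mathfrak{p}_2}$ is bilinear and does not factor through the product $c_1c_2$, so a congruence condition on $c$ alone does not control the individual factors $c_1,c_2\bmod 4$ in a decomposition. Making this step rigorous requires establishing the explicit dyadic reciprocity law in $\Q(\omega_8)$ and then propagating it through an induction on the number of prime-power factors, which is essentially equivalent to reproving the very Hecke reciprocity the paper invokes. In other words, the step you flag as ``the main obstacle'' and then defer is not a finishing detail but the heart of the lemma, and the phrase ``once that is done, the identity follows cleanly'' conceals the actual content. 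As it stands, the argument is incomplete at exactly this point; the paper's appeal to Hecke's Theorem~163 is what your CRT decomposition would have to reconstruct by hand.
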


\begin{proof}
Recall from Lemma \ref{hfm} that we can reduce to the case of studying $$ (\frac{m}{c})_2 \sum_{x(c)} e(\frac{x^2}{c}).$$ Theorem $163$ of \cite{Hec} implies $$ \sum_{x(c)} e(\frac{x^2}{c})=\sqrt{\N(c)} \frac{1}{\N(8)} \sum_{x(4)} e(\frac{-cx^2}{4}).$$ As $c \equiv 1 (4)$ the last sum equals $$\sum_{x(4)} e(\frac{-x^2}{4})$$ which equals $\sqrt{\N(8)}$ by the equation below $(202)$ in \cite{Hec}.

\end{proof}

 \begin{prop}\label{pow}
 Theorem \ref{c4} is true for prime powers if  $B=4B'$ with $A,B$ are squares. Namely, for $m>1$ \begin{equation} \label{eq:c41}
\sum_{x(p^m)}e(\frac{Ax^4+Bx^2}{p^m})-\sum_{x(p^m)} e(\frac{Ax^2+Bx}{p^m})= (\frac{AB}{p^m})_2 e(\frac{-B^2\overline{8 A}}{p^m})S_4(\overline{4^2A}B^2,\overline{4^2A}B^2,p^m).\end{equation} 
 \end{prop}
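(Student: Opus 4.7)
The plan is to adapt the approach of Theorem \ref{c4} to the prime-power setting: isolate the quadratic character piece via the substitution $y = x^2$, then evaluate the resulting sums by $p$-adic stationary phase (Hensel lifting of critical points) on both sides. Write $T := \sum_{x(p^m)} e((Ax^4+Bx^2)/p^m) = T_1 + T_2$ and $T' := \sum_{x(p^m)} e((Ax^2+Bx)/p^m) = T'_1 + T'_2$, where subscript $1$ denotes the sum over units and subscript $2$ the sum over $p \mid x$. Since $p$ is odd, $y = x^2$ on the unit part of $T$ traverses each square unit twice, giving $\#\{x : x^2 \equiv y \pmod{p^m}\} = 1 + \eta(y)$ for $y$ a unit (with $\eta$ the quadratic character mod $p^m$). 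Hence
\begin{equation*}
T_1 - T'_1 = \sum_{y(p^m)} \eta(y)\,e\!\left(\frac{Ay^2+By}{p^m}\right).
\end{equation*}
The boundary terms reduce, after $x = pw$, to lower-level Gauss sums via Lemma \ref{hfm1}. A short computation shows $T'_2 = 0$ for $m \geq 2$ (the linear term $Bw/p^{m-1}$ obstructs collapse), while $T_2$ picks up exactly the critical-point mass at $x = 0$ of $Ax^4+Bx^2$, equal to $\N(p)^n$ when $m = 2n$.

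I would then evaluate $\Sigma_\eta := \sum_{y(p^m)} \eta(y)\,e((Ay^2+By)/p^m)$ by $p$-adic stationary phase at the unique critical point $y_* = -B/(2A)$. For $m = 2n$ even, decomposing $y = z + p^n w$ with $z, w \in \Z/p^n$, the $w$-sum forces $f'(z) = 2Az + B \equiv 0 \pmod{p^n}$, leaving only the Hensel lift $\tilde y_* \equiv -B/(2A) \pmod{p^n}$. Using that $\eta$ is trivial on principal units and $f(\tilde y_*) \equiv -B^2/(4A) \pmod{p^{2n}}$, one obtains $\Sigma_\eta = \N(p)^n\,\eta(-2AB)\,e(-B^2\,\overline{4A}/p^{2n})$. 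The crucial input is that $-2AB$ is a \emph{global} square in $\ri = \Z[\omega_8]$: with $A$ and $B$ squares by hypothesis and $-2 = (\omega_8 - \omega_8^{-1})^2 \in \ri^2$, one has $\eta(-2AB) = 1$ uniformly in $p$. For $m = 2n+1$ odd, the same decomposition produces an additional classical quadratic Gauss factor via Lemma \ref{hfm1}.

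For the right-hand side I would invoke Proposition \ref{klo}: since the two Kloosterman arguments are equal, $S_4(\overline{16A}B^2, \overline{16A}B^2, p^m)$ localizes on $u = \pm 1$, the square roots of $1 \pmod{p^m}$. Combining with the prefactor $(\frac{AB}{p^m})_2\,e(-B^2\,\overline{8A}/p^m)$ (and noting $(\frac{AB}{p^m})_2 = 1$ since $AB$ is a square), the $u=+1$ contribution simplifies to $\N(p)^n$, matching the boundary mass $T_2 - T'_2 = \N(p)^n$ from the previous step, while the $u=-1$ contribution becomes $(\frac{-1}{p^m})_4\,\N(p)^n\,e(-B^2\,\overline{4A}/p^m)$, matching $\Sigma_\eta$. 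For $m = 2n$ even, $(\frac{-1}{p^{2n}})_4 = 1$ because $\N(p)^{2n} \equiv 1 \pmod 8$, completing the identity in the even case.

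The hard part will be the odd case $m = 2n+1$: the $\sqrt{\N(p)}$-sized quadratic Gauss factor produced on the LHS by the extra $p$-adic integration must reconcile exactly with the $(\frac{\overline{16A}B^2}{p})_2\,\sqrt{\N(p)}$ that Proposition \ref{klo} produces in odd parity, together with the now-nontrivial quartic symbol $(\frac{-1}{p})_4$. Tracking this requires a Hasse--Davenport-type identity between the associated quadratic and quartic Gauss sums---the very mechanism employed in the prime case of Theorem \ref{c4}---along with the hypothesis $B = 4B'$, which ensures that the relevant second-order Taylor remainders about the critical points lie in $\ri$ and hence vanish under $e(\cdot)$.
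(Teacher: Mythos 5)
Your approach is essentially the paper's own: localize the Kloosterman sum via Proposition \ref{klo} to $u=\pm1$, use $p$-adic stationary phase on the exponential-sum side, and observe that all quadratic symbols trivialize because $A$, $B$, $2$, and $-1$ are squares in $\ri=\Z[\omega_8]$. Your even-case computation of $\Sigma_\eta=\N(p)^n\eta(-2AB)e(-B^2\overline{4A}/p^{2n})$, the vanishing of $T'_2$, and the matching of $T_2=\N(p)^n$ to the $u=+1$ term are all correct and parallel the paper's invocation of Livn\'{e}--Patterson's Proposition 5.2; the only cosmetic difference is that you isolate $\Sigma_\eta=T_1-T'_1$ explicitly, whereas the paper evaluates $T$ and $T'$ whole and subtracts.

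Two points in your closing paragraph on the odd case are, however, misconceptions. First, $(\tfrac{-1}{p})_4$ is \emph{not} nontrivial for odd $m$: since $-1=\omega_8^4$ is a genuine fourth power in $\ri$, one has $(\tfrac{-1}{c})_4=1$ identically, so no quartic symbol survives. Second, no Hasse--Davenport relation is needed in the prime-power case; that identity is required only for the prime modulus $p$ in Theorem \ref{c4}, where the Fourier inversion over multiplicative characters produces a product of three Gauss sums. For $p^m$ with $m>1$, Proposition \ref{klo} already presents the Kloosterman sum as a two-term sum with a prefactor $(\tfrac{a}{p})_2=(\tfrac{\overline{16A}B^2}{p})_2=1$ (since $A$ is a square), and the stationary-phase Gauss factor on the left is $(\tfrac{g''(\alpha)/2}{p})_2\sqrt{\N(p)}=(\tfrac{-2B}{p})_2\sqrt{\N(p)}=\sqrt{\N(p)}$, again because $-2$ and $B$ are global squares. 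The odd and even cases thus close identically by elementary symbol-tracking; the extra Gauss factor you worried about is simply $\sqrt{\N(p)}$ times $1$ on both sides. With these simplifications your odd-case argument collapses to the same few lines as your even-case argument, matching the paper.
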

\begin{proof}


On the right hand side of \eqref{eq:c41} using Proposition \ref{klo} for odd powers, we have for $m \geq 1,$  \begin{multline}
\label{eq:okl}
(\frac{AB}{p^{2m+1}})_2 e(\frac{-B^2\overline{8 A}}{p^{2m+1}})\sum_{b(p^{2m+1})} (\frac{b}{p^{2m+1}})_4 e(\frac{\overline{4^2A}B^2(b+\overline{b})}{p^{2m+1}})=\\ (\frac{AB}{p^{2m+1}})_2 e(\frac{-B^2\overline{8 A}}{p^{2m+1}})\left[\N(p)^{m+1/2} (\frac{A}{p^{2m+1}})_2(e(\frac{B^2\overline{8A}}{p^{2m+1}})+e(\frac{-B^2\overline{8A}}{p^{2m+1}}))\right]=\\ (\frac{B}{p^{2m+1}})_2 \N(p)^{m+1/2}\left[1+e(\frac{-B^2\overline{4A}}{p^{2m+1}})\right].
\end{multline}

While for the even powers, the analogous calculation gives $$(\frac{B}{p^{2m}})_2 \N(p)^{m}\left[1+e(\frac{-B^2\overline{4A}}{p^{2m+1}})\right]=\N(p)^{m}\left[1+e(\frac{-B^2\overline{4A}}{p^{2m+1}})\right].$$

Now we can write for any $m>1,$ \begin{multline} \label{eq:podw} \sum_{\substack{x(p^{m})\\(x,p)>1}}e(\frac{Ax^4+Bx^2}{p^{m}})+\sum_{x(p^{m})} (\frac{x}{p^{m}})_2 e(\frac{Ax^2+Bx}{p^{m}})=\\ \sum_{x(p^{m})}  e(\frac{Ax^4+Bx^2}{p^{m}})-\sum_{x(p^{m})}  \mathbf{1}(x)e(\frac{Ax^2+Bx}{p^{m}}).\end{multline}

We use Proposition 5.2 from \cite{LP}. This proposition is an application of stationary phase. We will use the proposition on the first term on the above right hand side equation, the second term on the right hand side will follow by using Lemma \ref{hfm1}. Following their notation, we have $f(x)=Ax^4+Bx^2, f'(x)=4Ax^3+2Bx,$ and so the roots are $ \alpha=0$ and $\alpha^2=\overline{2A}B \pmod {p^m}.$ As $A,B$ are squares, such non-zero $\alpha$ exist as $\Q(\omega_8)$ contains $\sqrt{2}.$  

 We look at the odd powers first. It is easy to check that $f''(\frac{-B}{2A})=-4B,$ so $$\sum_{x(p^{2m+1})}  e(\frac{Ax^4+Bx^2}{p^{2m+1}})=\N(p)^{m+1/2}(\frac{B}{p^{2m+1}})_2\left[1+2e(\frac{-B^2\overline{4A}}{p^{2m+1}})\right].$$ 
Similarly, $$\sum_{x(p^{2m+1})}  e(\frac{Ax^2+Bx}{p^{2m+1}})=\N(p)^{m+1/2}(\frac{A}{p^{2m+1}})_2e(\frac{-B^2\overline{4A}}{p^{2m+1}})$$ by using Lemma \ref{hfm1} .

That the right hand side of \eqref{eq:podw} is actually $$\sum_{x(p^{2m+1})}e(\frac{Ax^4+Bx^2}{p^{2m+1}})-\sum_{x(p^{2m+1})} e(\frac{Ax^2+Bx}{p^{2m+1}}),$$ comes from the fact that $$\sum_{\substack{x(p^{m})\\(x,p)>1}}e(\frac{Ax^2+Bx}{p^m})=0$$ for $m$ odd or even.

The even powers are completely analogous. In this case using again Proposition 5.2 from \cite{LP}, $$\sum_{x(p^{2m})}  e(\frac{Ax^4+Bx^2}{p^{2m}})=\N(p)^{m}\left[1+2e(\frac{-B^2\overline{4A}}{p^{2m+1}})\right]$$ using the same argument as for the odd powers, and $$\sum_{x(p^{2m})}  e(\frac{Ax^2+Bx}{p^{2m}})=\N(p)^{m}e(\frac{-B^2\overline{4A}}{p^{2m}})$$

Again as $A,B$ are perfect squares, the left hand side of \eqref{eq:okl} equals the left hand side of \eqref{eq:podw}.

\end{proof}

 \begin{remark}
  Note the identity in Proposition \ref{klo} is the only point in the paper that requires the eighth roots of unity rather than the fourth roots of unity. In the case of Livn\'{e} and Patterson \cite{LP} their polynomial $f(x)=Ax^3+Bx\pmod p^{2m+1} $ has roots $0,\alpha^2= \overline{3A }B \pmod p^{2m+1}$ which exist as the cube roots of unity contains the ``square-root" of $3.$ So in their case the minimal field which is needed to study the trace formula for cubic metaplectic forms is the same as that for applying Katz's cubic identity. While in our case we need to make the field slightly larger than $\Q$ adjoined fourth roots of unity to apply the quartic identity above.
  \end{remark}


\subsection{Identity for composite numbers}\label{sec:coco}

 \begin{prop}\label{c400}
Let $A,B, c \in \ri.$ For $c \equiv 1(4),(AB,c)=1$ with $A,B$ squares we have 
 \begin{equation} \label{eq:c412}
\sum_{x(c)}e(\frac{Ax^4+Bx^2}{c})=(\frac{AB}{c})_2 e(\frac{-B^2\overline{8 A}}{c})S_4(\overline{4^2A}B^2,\overline{4^2A}B^2,c)+\sum_{x(c)}e(\frac{Ax^2+Bx}{c})+\{ \mbox{Cross terms}\}. 
 \end{equation} 
Here the  ``cross terms" are composed of the following terms: for any decomposition $c=nm$ with $(n,m)=1,n\neq c,m\neq c,$  $$\left[\sum_{x(n)}e(\frac{\overline{m}(Ax^2+Bx)}{n})\right]\left[(\frac{AB}{m})_2e(\frac{-B^2\overline{8 An}}{m})S_4(B^2\overline{n4^2A},B^2\overline{n4^2A},m)\right].$$

\end{prop}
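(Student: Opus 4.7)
The plan is to reduce to the prime-power case via the Chinese Remainder Theorem and track how the factors recombine. Write $c = \prod_i p_i^{e_i}$, set $q_i := p_i^{e_i}$, and let $\overline{q_i}$ denote the inverse of $q_i$ modulo $c/q_i$. By the standard twisted multiplicativity of polynomial exponential sums, for coprime $q,q' \in \ri$ and any $f \in \ri[x]$,
\begin{equation*}
\sum_{x(qq')} e\!\left(\frac{f(x)}{qq'}\right) = \sum_{y(q)} e\!\left(\frac{\overline{q'}\,f(y)}{q}\right)\sum_{z(q')} e\!\left(\frac{\overline{q}\,f(z)}{q'}\right),
\end{equation*}
so iterating expresses $\sum_{x(c)} e((Ax^4+Bx^2)/c)$ as a product of prime-power sums in which $A$ and $B$ are each multiplied by $\overline{c/q_i}$.

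Next, I would apply Proposition \ref{pow} (or Theorem \ref{c4} when $e_i=1$) to each prime-power factor. Because $A$ and $B$ are squares and $(AB,c) = 1$, the twisted symbols $(\overline{c/q_i}^{\,2}AB/q_i)_2 = (AB/q_i)_2$ and the Kloosterman arguments $\overline{4^2(\overline{c/q_i}\,A)}\,(\overline{c/q_i}\,B)^2 = \overline{c/q_i}\,\overline{4^2A}\,B^2$ simplify cleanly. Hence each prime-power factor decomposes as a \emph{quadratic} piece plus a \emph{quartic Kloosterman} piece with no spurious character factors.

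Expanding the resulting product of binomials produces $2^{\omega(c)}$ terms, one for each choice of piece per prime. Group these by the unique partition $c = nm$ with $(n,m) = 1$, where $m$ collects the primes contributing the Kloosterman piece and $n$ collects those contributing the quadratic piece. Reassembling the $n$-side via the twisted multiplicativity displayed above (in reverse) recovers $\sum_{x(n)} e(\overline{m}(Ax^2+Bx)/n)$. Reassembling the $m$-side uses the twisted multiplicativity of the quartic Kloosterman sum
\begin{equation*}
S_4(a,a,m_1 m_2) = S_4(\overline{m_2}\,a,\overline{m_2}\,a,m_1)\, S_4(\overline{m_1}\,a,\overline{m_1}\,a,m_2),
\end{equation*}
which follows from CRT inside the definition of $S_4$ together with the multiplicativity of $(\cdot/m)_4$; combined with the multiplicativity of $(\cdot/m)_2$ and the partial-fraction identity $\sum_{p_i \mid m} \overline{m/q_i}/q_i \equiv 1/m \pmod{1}$ for the exponential, this produces exactly $(AB/m)_2\, e(-\overline{n}\,B^2\,\overline{8A}/m)\, S_4(\overline{n\cdot 4^2 A}\,B^2,\ \overline{n\cdot 4^2 A}\,B^2,\ m)$. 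The extreme cases $(n,m)=(c,1)$ and $(1,c)$ give the pure quadratic term and the full Kloosterman main term, respectively; all intermediate choices make up the stated cross terms.

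The main obstacle is the bookkeeping for the twisted multiplicativity of $S_4$ and the precise tracking of the inverse twists $\overline{n},\overline{m}$ through the character, exponential, and Kloosterman arguments. The hypotheses $c \equiv 1(4)$, $A,B$ squares, and $(AB,c)=1$ are exactly what is needed to guarantee that no stray quartic or quadratic reciprocity factors appear and that the local Kloosterman symbols always exist on units, so that the composite identity assembles into the stated form without correction.
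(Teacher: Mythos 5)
Your proposal is correct and takes essentially the same approach as the paper: reduce by CRT to prime-power moduli, apply Theorem~\ref{c4} and Proposition~\ref{pow} locally, then expand the resulting product and group the $2^{\omega(c)}$ terms by the coprime partitions $c=nm$, reassembling each side via twisted multiplicativity of the quadratic sum and of $S_4$. The paper phrases this as a two-factor case plus induction on the number of prime divisors, while you expand the full product directly; the bookkeeping of the inverse twists $\overline{c/q_i}$ that you highlight (and which simplifies precisely because $A,B$ are squares and $(AB,c)=1$) is the same computation in both versions.
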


\begin{proof}
It suffices to do this for $c=mn, (m,n)=1$ with $m=p^k$ and $n=q^j,$ $p,q$ primes in $\ri.$ Induction on the number of primes in the decomposition will complete the proposition.

Note by Chinese remainder theorem \begin{equation}\label{eq:suck} \sum_{x(c)}e(\frac{Ax^4+Bx^2}{c})=\left[\sum_{x(n)}e(\frac{\overline{m}(Ax^4+Bx^2)}{n})\right]\left[\sum_{x(m)}e(\frac{\overline{n}(Ax^4+Bx^2)}{m})\right].\end{equation}


We can write the right hand side of \eqref{eq:suck} using Theorem \ref{c4} and Proposition \ref{pow} as \begin{multline}\label{eq:s1s}
\left[\sum_{x(n)}e(\frac{\overline{m}(Ax^2+Bx)}{n})+(\frac{AB}{n})_2 e(\frac{B^2\overline{8 Am}}{n})S_4(-B^2\overline{m8A},n)\right]\times \\ \left[\sum_{x(m)}e(\frac{\overline{n}(Ax^2+Bx)}{m})+(\frac{AB}{m})_2 e(\frac{-B^2\overline{8 An}}{m})S_4(B^2\overline{n8A},m)\right].
\end{multline}

If we multiply out \eqref{eq:s1s}, the ``end terms" are natural and can be written as $$\sum_{x(nm)}e(\frac{Ax^2+Bx}{mn})=\sum_{x(c)}e(\frac{Ax^2+Bx}{c})$$ plus $$(\frac{AB}{c})_2 e(\frac{-B^2\overline{8 A}}{nm})S_4(-B^2\overline{8A},nm)=(\frac{AB}{c})_2e(\frac{-B^2\overline{8 A}}{c})S_4(-B^2\overline{8A},c),$$ with the first equality coming directly from Chinese remainder theorem and the second coming from twisted multiplicativity of Kloosterman sums: $$\sum_{b(m)} (\frac{b}{m})_4 e(\frac{\overline{4^2An}B^2(b+\overline{b})}{m})\sum_{b(n)} (\frac{b}{n})_4 e(\frac{\overline{4^2Am}B^2(b+\overline{b})}{n})=\sum_{b(mn)} (\frac{b}{mn})_4 e(\frac{\overline{4^2A}B^2(b+\overline{b})}{mn}).$$

The ``cross" terms of multiplying out \eqref{eq:s1s} are: \begin{multline}\label{eq:csh}\left[\sum_{x(n)}e(\frac{\overline{m}(Ax^2+Bx)}{n})\right]\left[(\frac{AB}{m})_2 e(\frac{-B^2\overline{8 An}}{m})S_4(-B^2\overline{n4^2A},m)\right]+\\ \left[\sum_{x(m)}e(\frac{\overline{n}(Ax^2+Bx)}{m})\right]\left[(\frac{AB}{n})_2e(\frac{-B^2\overline{8 Am}}{n})S_4(-B^2\overline{m4^2A},n).\right]\end{multline}

The point now is if $c=\prod_{j=1}^N p_j^{k_j},$ by multiplicativity of the individual Kloosterman sums and quadratic exponential sums, we can always reduce to a product of a single Kloosterman sum term times a single quadratic exponential sum. So again considering the case $c=nm$ with $(n,m)=1$ is sufficient.




\end{proof}

\section{From the trace formula to asymptotics of sums of exponential sums}\label{sec:mww}



Let $\Psi \in C^{\infty}_0(\R^{+}),$ with transform $\widetilde{\Psi}_X(s)=\int_0^\infty \Psi(\frac{X}{t})t^{s-1}.$ Write $\widetilde{\Psi}(s)=\widetilde{\Psi}_1(s).$ Our goal in this section is to prove the following theorem.

\begin{theorem}\label{al}
For $X$ large and for any $0<\epsilon<1/2$ we have
\begin{multline*}\label{eq:fsf00}
\sum_{c \in {}^{\sigma'}\mathcal{C}^{\sigma}} \frac{S^{\sigma,\sigma'}_{\theta\kappa}(r,r',c)}{\N(c)} \prod_{j=1}^2\Psi(\frac{X}{|c^{\eta_j}|^2})=\\ (\frac{\pi}{2})^2 \sum_{\substack{l \geq 1\\ \mu_l \mbox{ excep.}}} \overline{c_{r,\sigma}(f_l)}c_{r',\sigma'}(f_l)d_{r,\sigma}(\mu_l)\overline{d_{r',\sigma'}(\mu_l)}\prod_{j=1}^2 \frac{ \widetilde{\Psi}(\mu_{l,j})}{g(-\mu_{l,j})}X^{\mu_{l,j}}+O(\log X+X^{\epsilon-1}).
\end{multline*} Here excep. denotes metaplectic forms with exceptional spectral parameter.

\end{theorem}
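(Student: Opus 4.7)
The plan is to apply the Bruggeman--Miatello--Kuznetsov trace formula of Theorem \ref{calk} with a carefully chosen test function $k\in\mathcal{K}_a(\psi)$ whose Bessel transform $Bk$ reproduces, up to admissible error, the geometric weight $\N(c)^{-1}\prod_{j=1}^{2}\Psi(X/|c^{\eta_j}|^2)$, and then to read off the main terms from the spectral side.

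First I would use Mellin inversion to write
$$\frac{1}{\N(c)}\prod_{j=1}^{2}\Psi\!\left(\frac{X}{|c^{\eta_j}|^2}\right) \;=\; \prod_{j=1}^{2}\frac{1}{2\pi i}\int_{(\sigma_0)}\widetilde{\Psi}(s_j)\,X^{-s_j}|c^{\eta_j}|^{2s_j-2}\,ds_j,$$
and choose $k(\nu)=k_1(\nu_1)k_2(\nu_2)$ so that each $k_j$ interpolates $\widetilde{\Psi}(-\mu_j)X^{\mu_j}$ at the exceptional parameter $\mu_j$. The factor $g(-\mu_j)$ in the target formula will emerge from the residue normalization of the Bessel-to-Mellin reduction below, and the exponential decay on vertical lines required by Definition \ref{wert} is supplied automatically by the Mellin transform of the smooth compactly supported function $\Psi$.

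The principal technical obstacle is the Bessel-to-Mellin reduction of \cite{BM1} and \cite{MW}. Opening each $I$-Bessel factor appearing in $Bk(t)$ as its defining power series and shifting the $\nu_j$-contour leftward from $\Re(\nu_j)=0$ past $\Re(\nu_j)=\mu_j$, one picks up residues at the poles of $1/\sin\pi\nu_j$ which assemble into the desired Mellin main term with normalization $g(-\mu_j)$, while the shifted remainder integral is controlled by the vertical-line decay of $k_j$. This is precisely the argument carried out in \cite{BM1,MW} over real rank one groups (and products thereof) for Kloosterman asymptotics; it transfers to the two complex places of $\F$ because the trace formula of the appendix was constructed precisely for this compatibility.

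Granting the reduction, the spectral side of Theorem \ref{calk} splits into three pieces. Discrete exceptional forms, which by Theorem \ref{exi} satisfy $\Re(\mu_{l,j})\le\alpha/4$, contribute exactly
$$\bigl(\tfrac{\pi}{2}\bigr)^{2}\sum_{\substack{l\ge 1\\ \mu_l\ \mathrm{excep.}}}\overline{c_{r,\sigma}(f_l)}c_{r',\sigma'}(f_l)\,d_{r,\sigma}(\mu_l)\overline{d_{r',\sigma'}(\mu_l)}\prod_{j=1}^{2}\frac{\widetilde{\Psi}(\mu_{l,j})}{g(-\mu_{l,j})}X^{\mu_{l,j}}.$$
Tempered discrete forms ($\Re(\mu_{l,j})=0$) give only oscillatory factors $X^{i\Im(\mu_{l,j})}$, which after absolute convergence via Weyl's law and rapid vertical decay of $\widetilde{\Psi}$ contribute the $O(\log X)$ tail. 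The continuous contribution $CSC_{r,r'}^{\sigma,\sigma'}(k)$ is bounded by a standard contour shift; the only Eisenstein pole encountered corresponds to the residual theta, which is already accounted for in the discrete sum, so the shifted integral yields the claimed $O(X^{\epsilon-1})$ remainder. Combining the three contributions completes the proof.
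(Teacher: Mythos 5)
Your overall strategy -- pick a test function $k$ so that $Bk$ reproduces the geometric weight, apply the trace formula, read off the exceptional eigenvalues -- is the paper's strategy, and the citations to \cite{BM1,MW} are the right ones. But the way you describe how the test function is built and where the normalization $g(-\mu_j)$ comes from contains a genuine misconception, not just imprecision.

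You cannot ``choose $k_j$ so that it interpolates $\widetilde{\Psi}(-\mu_j)X^{\mu_j}$ at the exceptional parameter $\mu_j$.'' The exceptional parameters $\mu_{l,j}$ are unknowns that the trace formula is supposed to reveal; you cannot prescribe $k$ at them in advance while simultaneously forcing $Bk$ to match the geometric weight. The actual construction, following \cite{BM1} and used verbatim in the paper, goes the other way: one defines $h_{j,X}(\nu)$ directly from $\Psi$ and the explicit kernel function $g$, as
\[
h_{j,X}(\nu)=\frac{\widetilde{\Psi_X}(-\nu)-\widetilde{\beta_X}(-\nu)}{g(\nu)}+\frac{\widetilde{\Psi_X}(\nu)-\widetilde{\beta_X}(\nu)}{g(-\nu)},
\]
and only afterwards observes two things: (i) the Bessel transform $Bh_X$ agrees with $\N(c)^{-1}\prod_j\Psi(X/|c^{\eta_j}|^2)$ up to $O(X^{\epsilon-2})$ per term, which is where the Bessel-to-Mellin reduction (the estimate $\tau(\cdot)=1+O(|c|^{-2}/(1+|\Im\nu|))$) is actually used; and (ii) evaluating $h_X$ at a parameter $\mu\in(0,1]^2$ gives $\prod_j\widetilde{\Psi}(\mu_j)X^{\mu_j}/g(-\mu_j)+O(X^{-\mu_j})$. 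The factor $g(-\mu_j)$ is therefore hard-wired into the definition of $h_{j,X}$, not ``emergent from residue normalization'' of the Bessel reduction. Relatedly, your contour-shift description is inconsistent: you say you shift $\Re(\nu_j)$ leftward from $0$ ``past $\Re(\nu_j)=\mu_j$'' with $\mu_j\in(0,1]$, which lies to the right of $0$; and the poles of $1/\sin\pi\nu_j$ sit at integers, not at $\mu_j$, so they cannot produce the exceptional main term.

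Your accounting of the error is also transposed. In the paper, $O(X^{\epsilon-1})$ is the cumulative cost of the approximation $Bh_X\approx\N(c)^{-1}\prod_j\Psi(\cdot)$ on the geometric side, while the continuous-spectrum contribution $CSC_{r,r'}^{\sigma,\sigma'}(h_X)$ together with the non-exceptional discrete spectrum is bounded by $O(1+\log X)$ (the estimate labelled (20) in \cite{BM1}). You instead assign $O(X^{\epsilon-1})$ to the continuous spectrum and $O(\log X)$ to ``tempered discrete via Weyl's law,'' and then wave at an Eisenstein pole ``already accounted for in the discrete sum'' -- but in the metaplectic setting the residual Eisenstein series is a discrete spectrum element, and the residue story you gesture at is not how the $CSC$ term is estimated here. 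To repair the argument: drop the interpolation idea, define $h_X$ as above, verify $h_X\in\mathcal{K}_a(\gamma)$ from the decay of $\widetilde{\Psi_X}$ and the location of the zeros of $g$, invoke the BM1 estimates to replace $Bh_X$ by the geometric weight with error $O(X^{\epsilon-1})$, and split the spectral side into exceptional (main term via the explicit formula for $h_X$ on $(0,1]^2$) and everything else (bounded by $O(\log X)$).
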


Let us setup how to prove Theorem \ref{al}. We use the trace formula from Section \ref{sec:app} which is also stated in Theorem \ref{kuk}. The point of this section is to use a clever choice of test function in this trace formula to get an asymptotic for a sum of Kloosterman sums. Thankfully, the hard work has been done for us in \cite{BM1}. There they got an asymptotic for a group of real rank one. In fact the exact same notation is used in a construction of a Kuznetsov formula for a product of real rank one groups in \cite{MW}. The point of this section is to apply the test function built in \cite{BM1} to a group of real rank $2.$  The estimates in this higher real rank case our identical to the real rank one case and we cite them when we can.

To exploit the test functions in \cite{BM1}, we write from our above trace formula the Bessel transform as 

\begin{multline}
Bk(\frac{ mn}{c})=\prod_{j=1}^2 \frac{i}{2} \int_{\Re(\nu)=0} k(\nu_j) \bigg(I_{-\nu_j}(4\pi (\sqrt{\frac{ mn}{c}})^{\sigma_j})I_{-\nu_j}(4\pi \overline{ (\sqrt{\frac{ mn}{c}})^{\sigma_j}})-\\ I_{\nu_j}(4\pi  (\sqrt{\frac{ mn}{c}})^{\sigma_j})I_{\nu_j}(4\pi \overline{ (\sqrt{\frac{ mn}{c}})^{\sigma_j}})\bigg)d\nu_j=\\ =\prod_{j=1}^2 \int_{\Re(\nu_j)=0} k(\nu) g(\nu_j) \tau(\chi_{m},\chi_{n}, \begin{pmatrix} \frac{1}{|c^{\eta_j}|} & 0 \\ 0 & |c^{\eta_j}| \end{pmatrix},\nu_j) d\nu_j
\end{multline}

with $$g(\nu)=\frac{4(4\pi^2|m_1m_2|)^{\nu}\nu}{\Gamma(\nu+1)^2},$$ and $$
\tau(\chi_{m},\chi_{n},\begin{pmatrix} \frac{1}{|c^{\eta}|} & 0 \\ 0 & |c^{\eta}| \end{pmatrix},\nu)=\sum_{j,k \geq 0} \frac{(4\pi^2 \overline{(\frac{mn}{c})^{\eta}})^j(4\pi^2 (\frac{mn}{c})^{\eta})^k}{j!k!\Gamma(\nu+1+j)\Gamma(\nu+1+j)}.$$ 

This is the same function $g$ that shows up in Theorem \ref{ftt}. Also, $\chi_m(\begin{pmatrix} 1 & x \\ 0 & 1 \end{pmatrix})=exp(2\pi iTr (mx)).$ The important point of writing the Bessel transform in this way is that we can use estimates from \cite{BM1}. We will describe these estimates in Theorem \ref{al}.

\begin{proof}\{Theorem $9.1$\}

  We define the test functions associated to our trace formula as $$h_{j,X}(\nu)=\left[\frac{\widetilde{\Psi_X}(-\nu)-\widetilde{\beta_X}(-\nu)}{g(\nu)}+\frac{\widetilde{\Psi_X}(\nu)-\widetilde{\beta_X}(\nu)}{g(-\nu)})\right], j \in \{1,2\}$$ with $\beta_X$ defined in (\cite{BM1},pg.114). We remind that with this definition $h_X(\nu)=h_{1,X}(\nu_1)h_{2,X}(\nu_2)$ is an even function in $\nu_j$ and holomorphic on the strip $|\Re \nu_j| \leq \sigma_j$ with $\sigma_j$ larger than one.
\begin{remark} In \cite{BM1} their test function for a real rank one group is holomorphic on the strip $|\Re \nu| \leq \sigma$ for $\sigma$ larger than $\rho$ which is half the sum of the positive roots. \end{remark}

 For $|\Im \nu_j| \geq 1, |\Re \nu_j| \leq \sigma_j,$ we have $$h_{j,X}(\nu_j)=O(X^{|\Re \nu_j|} e^{-|\Im \nu_j|}(1+|\Im \nu_j|)^{-l}),$$ for any $l>0$ that is sufficiently large. We also have the estimate for $|\Im \nu_j| \leq 1,$ but $\pm \nu_j$ staying away from the zeroes of $g,$ $$h_{j,X}(\nu_j)=O(X^{|\Re \nu_j|}).$$ Near the zeroes of $g$ we have the estimate $$h_{j,X}(\nu_j)=O(X^{|\Re \nu_j|}(1+|\log X|)).$$ Since the zero of $g$ is at $\nu_j=-1,$ the size of this estimate is negligible. In particular $h_{X} \in K_{l}(\sigma)$(Definition \ref{wert}) for $l$ large and $\sigma_j>2$ and can be used in the trace formula.



 If $\nu_j \in (0,1]$ and is not a zero of $g$ (which will be the case in this paper), the automorphic form
associated to this spectral parameter is in the complementary series, and \begin{equation}\label{eq:comps} h_{X}(\nu)= \prod_{j=1}^2 \left[\frac{ \widetilde{\Psi}(\nu_j)}{g(-\nu_j)}X^{\nu_j}+O(X^{-\nu_j})\right]= \prod_{j=1}^2 \frac{ \widetilde{\Psi}(\nu_j)}{g(-\nu_j)}X^{\nu_j}+O(X^{-\min(\nu_1,\nu_2)}).\end{equation}

\begin{remark}
In the paper \cite{BM1}, the variable $X \to 0$ while in our case we have $X \to \infty.$ This is not a problem due to our definition of $h_X(\nu).$ Specifically, if $X \to 0$ then for $\nu \in \R,$ $$\frac{\widetilde{\Psi}(-\nu)X^{- \nu}}{g(\nu)}$$ will be the main term. This is their estimate  while if $X \to \infty$ then the main term is $$\frac{\widetilde{\Psi}(\nu)X^{ \nu}}{g(-\nu)}.$$ 


Thus inverting the estimates of \cite{BM1} so that $X \to \infty$ is straightforward.

\end{remark}

We now state a crucial estimate [\cite{BM1}, (8)]  $$\tau(\chi_{m},\chi_{n}, \begin{pmatrix} \frac{1}{|c|} & 0 \\ 0 & |c| \end{pmatrix},\nu) = 1 + O(\frac{|c|^{-2}}{1+|\Im \nu|})$$ on $-\epsilon \leq \Re \nu \leq \sigma$ for each $\epsilon \in (0,1/2)$ with the implicit constant depending on $\epsilon$ and $\sigma.$
 
 
 

We note $$\frac{Bh_X(\frac{rr'}{c^2})}{\N(c)}=\prod_{j=1}^2 \int_{\Re(\nu_j)=0} h_X(\nu_j) g(\nu_j) \tau(\chi_{r},\chi_{r'}, \begin{pmatrix} \frac{1}{|c^{\eta_j}|} & 0 \\ 0 & |c^{\eta_j}| \end{pmatrix},\nu_j) d\nu_j$$ 

is equal to $$\prod_{j=1}^2 \widetilde{h_{j,X}}((ma)^{\sigma_j})=\prod_{j=1}^2  \widetilde{h_{j,X}}\begin{pmatrix} \frac{1}{|c^{\eta_j}|} & 0 \\ 0 & |c^{\eta_j}|\end{pmatrix}$$ with analogous notation to \cite{BM1},\cite{MW}.
  
  Directly following the analysis of [\cite{BM1},3.3] we have \begin{equation}\label{eq:ada}\prod_{j=1}^2  \widetilde{h_{j,X}}\begin{pmatrix} \frac{1}{|c^{\eta_j}|} & 0 \\ 0 & |c^{\eta_j}|\end{pmatrix}=   \prod_{j=1}^2 \left[ \frac{1}{|c^{\eta_j}|^2}\Psi(\frac{X}{|c^{\eta_j}|^2})+\frac{1}{(|c^{\eta_j}|^2)}O(X^{\epsilon-1}+X^{-1})\right]\end{equation} for any $\epsilon>0.$ Using the fact that $\Psi$ is compactly supported and therefore $|c^{\eta_j}|^2 \sim X$, \eqref{eq:ada} can be written as \begin{equation}\label{eq:ada1}  \frac{1}{\N(c)}\prod_{j=1}^2\Psi(\frac{X}{|c^{\eta_j}|^2})+O(X^{\epsilon-2}).\end{equation}


Define $$F_{\Psi}(X):= \sum_{c \in {}^{\sigma'}\mathcal{C}^{\sigma}} \frac{S^{\sigma,\sigma'}_{\theta\kappa}(r,r',c)}{\N(c)} \prod_{j=1}^2\Psi(\frac{X}{|c^{\eta_j}|^2}).$$ 



Combining the Bruggeman-Miatello-Kuznetsov trace formula (Theorem \ref{kuk}) and following the estimates $(18)$ and $(19)$ of \cite{BM1} we have \begin{multline}\label{eq:difft}\sum_{l \geq 1} \overline{c_{r,\sigma}(f_l)}c_{r',\sigma'}(f_l)d_{r,\sigma}(\mu_l)\overline{d_{r',\sigma'}(\mu_l)}h_X(\mu_l)+CSC_{r,r'}^{\sigma,\sigma'}(h_X)-\\ vol(\Gamma_{N^{\sigma}}\backslash N^{\sigma})\alpha(\sigma,r,\sigma',r') \prod_{j=1}^2 \frac{i}{2}\int_{\Re(\nu_j)=0} h_{j,X}(\nu_j)\sin \pi \nu_j d\nu_j=\\ (\frac{2}{\pi})^2 F_{\Psi}(X) + O(X^{\epsilon-1}),
\end{multline}
where the bound $O(X^{-1/2})$ (from \cite{BM1}) is taken into the bound $O(X^{\epsilon-1})$ since $\epsilon>0$ can be chosen much smaller than $1/2.$
As we will always take nonequivalent cusps, the delta term is zero. For equivalent cusps, the estimate of this term is in \cite{BM1}.

Likewise, we break the spectral terms into those with exceptional spectral parameter and those that are not, which we label ``non-excep". The latter is estimated in [\cite{BM1},(20)]$$ \sum_{\substack{l \geq 1\\ \mu_l \mbox{ non-excep.}}}\overline{c_{r,\sigma}(f_l)}c_{r',\sigma'}(f_l)d_{r,\sigma}(\mu_l)\overline{d_{r',\sigma'}(\mu_l)}h_X(\mu_l)+CSC_{r,r'}(h_X) \ll O(1+\log X).$$
 
The other spectral terms, are only finite in number and using Theorem $1$ from \cite{BM1} we have \begin{multline}\label{eq:compe} \sum_{\substack{l \geq 1\\ \mu_l \mbox{ excep.}}} \overline{c_{r,\sigma}(f_l)}c_{r',\sigma'}(f_l)d_{r,\sigma}(\mu_l)\overline{d_{r',\sigma'}(\mu_l)}h_X(\mu_l)=\\\sum_{\substack{l \geq 1\\ \mu_l \mbox{ excep.}}} \overline{c_{r,\sigma}(f_l)}c_{r',\sigma'}(f_l)d_{r,\sigma}(\mu_l)\overline{d_{r',\sigma'}(\mu_l)}\prod_{j=1}^2 \frac{ \widetilde{\Psi}(\mu_{l,j})}{g(-\mu_{l,j})}X^{\mu_{l,j}}+O(1+X^{-1}).
\end{multline} 

Using \eqref{eq:difft} and \eqref{eq:compe} we get \begin{multline}\label{eq:fsf}
\sum_{c \in {}^{\sigma'}\mathcal{C}^{\sigma}} \frac{S^{\sigma,\sigma'}_{\theta\kappa}(r,r',c)}{\N(c)} \prod_{j=1}^2\Psi(\frac{X}{|c^{\eta_j}|^2})=\\ (\frac{\pi}{2})^2\sum_{\substack{l \geq 1\\ \mu_l \in \mbox{ excep.}}} \overline{c_{r,\sigma}(f_l)}c_{r',\sigma'}(f_l)d_{r,\sigma}(\mu_l)\overline{d_{r',\sigma'}(\mu_l)} \prod_{j=1}^2 \frac{ \widetilde{\Psi}(\mu_{l,j})}{g(-\mu_{l,j})}X^{\mu_{l,j}}+O(\log X+ X^{\epsilon-1})
\end{multline} follows for any $0<\epsilon<1/2.$ This concludes Theorem \ref{al}.
\end{proof}

\begin{remark}
One can compare the estimates here in this section to the totally real case in \cite{BMP1} where they also consider a smooth compactly supported test function $$\prod_{j=1}^d \Psi(\frac{X}{|c^{\eta_j}|})$$ where $d$ is the number of real embeddings of the given field.
\end{remark}

\section{Quadratic exponential sums}\label{sec:qdd}

Recall from Proposition \ref{c400} that if we want to study sums of quartic exponential sums then we must also understand quadratic exponential sums. In this section we find an asymptotic for sums of certain quadratic exponential sums: \begin{equation}\label{eq:qa}
\sum_{\substack{c \in \ri\\ c\equiv 1(4)\\(c,D)=1}} \frac{\prod_{j=1}^2\Psi(\frac{\sqrt{X}}{|c^{\eta_j}|^2}))}{\N(c)}\theta(c)\sum_{x(c)} e(Tr(\frac{Ax^2+Bx+F}{c}))
\end{equation} with $\Psi \in C^{\infty}_0(\R^{+}).$ For this section let us abbreviate $e(Tr(x))$ to $e(x).$

We prove

\begin{prop}\label{q22}
Let $A,B,F,D \in \ri.$ Assume $\Psi \in C^{\infty}_0(\R^{+})$ with $X>0$ large and $2 \nmid D, B=4B'$ and $A,B'\parallel D^{\infty}$ with $\frac{B^2}{16A} \in \ri$ and $\frac{B^2}{16A} \equiv \pm 1(4).$  Suppose $\theta$ is a Dirichlet character modulo $D.$ Then for any $M\geq 0,$ we have \begin{multline}\label{eq:qaf}\sum_{\substack{c \in \ri\\ c\equiv 1(4)}} \frac{\prod_{j=1}^2\Psi(\frac{\sqrt{X}}{|c^{\eta_j}|^2})}{\N(c)}\theta(c)\sum_{x(c)} e(\frac{Ax^2+Bx+F}{c})=\\ \frac{\sqrt{X}T_{A,D}(\theta) }{4} \int_{\C^2} \Psi(\frac{1}{|t|^2})e(\frac{B^2}{4AX^{1/4}t})e(\frac{F}{X^{1/4}t})\frac{dt}{|t|}+O(X^{-M})\end{multline}
with

\begin{equation}
 T_{A,D}(\theta)
 := \left\{ \begin{array}{ll} \N(A) & \text{if } \theta \equiv \mathbf{1}(D); \medskip \\
        0 & \text{if else}.\end{array} \right.  \end{equation}

\end{prop}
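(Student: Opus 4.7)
I would evaluate the inner quadratic Gauss sum via Lemma~\ref{hfm1} and then apply Poisson summation on the lattice $\ri\subset\C^2$ to extract the main term.

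First, the hypotheses $c\equiv 1(4)$, $(c,D)=1$, $A,B'\parallel D^\infty$, and $\tfrac{B^2}{16A}\in\ri$ ensure $(c,2A)=1$ and that the completion of the square $Ax^2+Bx=A(x+\overline{2A}B)^2-B^2\overline{4A}$ is integral modulo $c$. Lemma~\ref{hfm1} then gives
\begin{equation*}
 \sum_{x(c)} e\!\left(\tfrac{Ax^2+Bx+F}{c}\right) = \left(\tfrac{A}{c}\right)_{\!2}\sqrt{\N(c)}\,e\!\left(\tfrac{F-B^2\overline{4A}}{c}\right).
\end{equation*}
Substituting back and setting $\chi(c):=\theta(c)(\tfrac{A}{c})_2$, $r:=F-B^2\overline{4A}$, the target sum becomes the character-twisted smooth sum
\begin{equation*}
 S(X) :=\sum_{\substack{c\equiv 1(4)\\(c,D)=1}}\frac{\prod_{j=1}^2 \Psi(\sqrt X/|c^{\eta_j}|^2)}{\sqrt{\N(c)}}\,\chi(c)\,e\!\left(\tfrac{r}{c}\right).
\end{equation*}
Under the square hypothesis on $A$ (implicit in the envisaged application to Theorem~\ref{ftt}), $(\tfrac{A}{\cdot})_2$ is identically $1$, so $\chi$ is a Dirichlet character mod $4D$, trivial iff $\theta\equiv\mathbf 1(D)$; more generally, quadratic reciprocity in $\F$ (legitimate because $c\equiv 1(4)$) and $\mathrm{rad}(A)\mid D$ still promote $\chi$ to a Dirichlet character modulo $4D$.

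Next, I would apply Poisson summation. Partition the $c$-sum by residue classes $c\equiv a\pmod Q$ with $Q=4D$, weighted by $\chi(a)$; on each coset $a+Q\ri\subset\C^2$,
\begin{equation*}
 \sum_{c\in a+Q\ri} F_X(c)=\frac{1}{\mathrm{covol}(Q\ri)}\sum_{\xi\in \tfrac{1}{Q}\mathcal D_\F^{-1}} e(\mathrm{Tr}(\xi a))\,\widehat{F_X}(\xi),
\end{equation*}
where $F_X(c):=\prod_j \Psi(\sqrt X/|c^{\eta_j}|^2)\,\N(c)^{-1/2}\,e(r/c)$. The $\xi=0$ term is independent of $a$, so averaging against $\chi(a)$ annihilates it unless $\chi\equiv\mathbf 1$, immediately yielding the ``else'' case. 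When $\chi$ is trivial, the $\xi=0$ contribution equals $\tfrac{\phi(Q)}{\mathrm{covol}(Q\ri)}\int_{\C^2}F_X\,dc$; the substitution $c=X^{1/4}t$ computes
\begin{equation*}
 \int_{\C^2}F_X\,dc=X^{1/2}\int_{\C^2}\Psi(1/|t|^2)\,e\!\left(\tfrac{r}{X^{1/4}t}\right)\frac{dt}{\sqrt{\N(t)}},
\end{equation*}
and collecting the volume constants should reproduce the claimed prefactor $\tfrac{\sqrt X\,\N(A)}{4}$.

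Finally, the nonzero frequencies are negligible: setting $G(t):=F_X(X^{1/4}t)$ yields a function supported on $|t^{\eta_j}|\sim 1$ with all $t$-derivatives $O_k(1)$ --- the phase $r/(X^{1/4}t)$ has frequency $\ll X^{-1/4}$ and so does not disturb smoothness in $t$ --- hence $\widehat G$ is Schwartz and $\widehat{F_X}(\xi)=X\,\widehat G(X^{1/4}\xi)=O_M\bigl(X(X^{1/4}|\xi|)^{-M}\bigr)$ for $|\xi|\geq c_0>0$, giving $O(X^{-M'})$ for any $M'$ after summation. The main technical obstacle will be the bookkeeping of the constant: reconciling $\phi(Q)/\mathrm{covol}(Q\ri)$ with the rescaling $X^{1/2}$ and the Gauss-sum normalization $\sqrt{\N(c)}^{-1}$ to land exactly on $\tfrac{\N(A)}{4}$, and in particular identifying where the $\N(A)$ arises --- most plausibly from the way the square hypothesis on $A$, via quadratic reciprocity, enlarges the set of residue classes contributing to the main term.
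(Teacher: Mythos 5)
Your proposal is essentially the same strategy as the paper's proof: evaluate the inner Gauss sum via Lemma~\ref{hfm1}, split the $c$-sum into arithmetic progressions, apply Poisson summation, extract the $\xi=0$ frequency as main term, and kill the nonzero frequencies by integration by parts / decay of the transform. You do take one valid shortcut that the paper does not: rather than invoking elementary reciprocity $e(\tfrac{\overline{A}}{B})=e(\tfrac{-\overline{B}}{A})e(\tfrac{1}{AB})$ to split $e(\tfrac{-B^2\overline{4A}}{c})$ into a $c$-smooth factor $e(\tfrac{B^2}{4Ac})$ and a $c$-periodic factor $e(\tfrac{\overline{c}B^2}{4A})$, you observe that the hypothesis $\tfrac{B^2}{16A}\in\ri$ makes $r:=F-\tfrac{B^2}{4A}$ a fixed integer so $e(r/c)$ is already a smooth archimedean function of $c$. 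That observation is correct (indeed, in the paper the periodic factor $e(\tfrac{\overline{d}B^2}{4A})$ is later discarded precisely because $\tfrac{B^2}{4A}\in\ri$ forces it to be~$1$), and it cleanly avoids the reciprocity step.

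There are two points where your write-up is thinner than it should be. First, the Poisson modulus: you take $Q=4D$, but the character $\chi(c)=\theta(c)(\tfrac{A}{c})_2$, once flipped via quadratic reciprocity to $(\tfrac{c}{A})_2$, is a Dirichlet character modulo $A$, and the hypothesis $A\parallel D^\infty$ only forces $\mathrm{rad}(A)\mid D$, not $A\mid D$. So the correct modulus is $Q=\mathrm{LCM}(4,A,D)$ as the paper uses; with $Q=4D$ the summand $\chi$ is not periodic on the cosets you chose and the decomposition is wrong whenever $v_p(A)>v_p(D)$ for some $p$. (Even in the special case $A$ a square, where $\chi=\theta$ is genuinely mod $D$, the extra factor of $\N(A)$ cannot come ``for free'' from $Q=4D$.) Second, you leave the evaluation of the constant as an open bookkeeping problem (``most plausibly from the way the square hypothesis on $A$ \ldots enlarges the set of residue classes''). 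This is exactly where the paper's content lies: it isolates the $d$-sum $\sum_{d(Q)}\lambda(d)\theta(d)(\tfrac{d}{A})_2$, factors it by CRT, and evaluates it prime-by-prime via Lemma~\ref{ptp}, which produces the $\N(A)$ and the vanishing for nontrivial $\theta$. Your proof would need this computation (with the corrected $Q$) to actually establish the stated value of $T_{A,D}(\theta)$, and in particular to show that the normalizing factors $\phi(Q')/\mathrm{covol}(Q'\ri)$ and the Jacobian $X^{1/2}$ combine to the claimed $\sqrt{X}\,\N(A)/\phi(4)$.
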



\begin{proof}

Using the above closed form expression for $\sum_{x(c)} e(\frac{Ax^2+Bx}{c})$ from Lemma \ref{hfm1} we have \begin{equation}\label{eq:qa1}
\sum_{\substack{c \in \ri\\ c\equiv 1(4)}} \frac{\prod_{j=1}^2\Psi(\frac{\sqrt{X}}{|c^{\eta_j}|^2})e(\frac{F}{c})}{
\sqrt{\N(c)}}\theta(c)(\frac{A}{c})_2 e(\frac{-\overline{4A}B^2}{c}).
\end{equation}
Applying elementary reciprocity which states $$e(\frac{\overline{A}}{B})=e(\frac{-\overline{B}}{A})e(\frac{1}{AB})$$ and quadratic reciprocity, \eqref{eq:qa1} equals 
 \begin{equation*}\label{eq:qa2}
\sum_{\substack{c \in \ri\\ c\equiv 1(4)}} \frac{\prod_{j=1}^2\Psi(\frac{\sqrt{X}}{|c^{\eta_j}|^2})e(\frac{B^2}{4Ac})e(\frac{F}{c})}{
\sqrt{\N(c)}}\theta(c)(\frac{c}{A})_2 e(\frac{\overline{c}B^2}{4A}).
\end{equation*}

The restriction on the $c$-sum we remove by Dirichlet characters modulo $4,$

 \begin{equation}\label{eq:qa3}
\frac{1}{\phi(4)}\sum_{\lambda(4)} \sum_{c \in \ri} \frac{\prod_{j=1}^2\Psi(\frac{\sqrt{X}}{|c^{\eta_j}|^2})e(\frac{B^2}{4Ac})e(\frac{F}{c})}{
\sqrt{\N(c)}}\lambda(c)\theta(c)(\frac{c}{A})_2 e(\frac{\overline{c}B^2}{4A}).
\end{equation}

We break the $c$-sum into arithmetic progressions modulo $Q:=LCM(4,A,D)$ to get 

 \begin{equation}\label{eq:qa4}
\frac{1}{\phi(4)}\sum_{\lambda(4)} \left[\sum_{d(Q)} \lambda(d)\theta(d)(\frac{d}{A})_2e(\frac{\overline{d}B^2}{4A})\right] \sum_{c\equiv d(Q)} \frac{\prod_{j=1}^2\Psi(\frac{\sqrt{X}}{|c^{\eta_j}|^2})e(\frac{B^2}{4Ac})e(\frac{F}{c})}{
\sqrt{\N(c)}} .
\end{equation}

 Define $\Psi(\frac{\sqrt{X}}{|c|^2}):=\prod_{j=1}^2\Psi(\frac{\sqrt{X}}{|c^{\eta_j}|^2}).$ Then using Poisson summation on the $k$-sum in $c=d+Qk,$ the interior sum can be written as $$\sum_{c\equiv d(Q)} \frac{\prod_{j=1}^2\Psi(\frac{\sqrt{X}}{|c^{\eta_j}|^2})e(\frac{B^2}{4Ac})}{\sqrt{\N(c)}}=\sum_{m \in \ri} e(\frac{-dm}{Q})\int_{\C^2} \Psi(\frac{\sqrt{X}}{|t|^2})e(\frac{B^2}{4At})e(\frac{F}{t})e(\frac{-mt}{Q})\frac{dt}{\sqrt{\N(t)}}.$$

In the integral, change variables $t \to X^{1/4}t$ to get \begin{equation}\label{eq:int1} \sqrt{X} \int_{\C^2}\Psi(\frac{1}{|t|^2})e(\frac{B^2}{4AX^{1/4}t})e(\frac{F}{X^{1/4}t})e(\frac{-mX^{1/4}t}{Q})\frac{dt}{\sqrt{\N(t)}}.\end{equation}

A standard integration by parts argument implies with $m\neq 0$ and large $X,$ \eqref{eq:int1} is $O(X^{-M})$ for any $M >0.$

So \eqref{eq:qa4} equals for any $M>0,$ \begin{equation}\label{eq:qa8}\frac{X}{\phi(4)}\sum_{\lambda(4)}  \left[\sum_{d(Q)} \lambda(d)\theta(d)(\frac{d}{A})_2e(\frac{\overline{d}B^2}{4A})\right]  \int_{\C^2} \Psi(\frac{1}{|t|^2})e(\frac{B^2}{4AX^{1/4}t})e(\frac{F}{X^{1/4}t})\frac{dt}{\sqrt{\N(t)}}+O(X^{-M}).\end{equation}

To make any further calculations we now use the following assumptions:  $B=4B'$ and $B'\parallel D^{\infty}$ with $\frac{B^2}{16A} \in \ri$ and $\frac{B^2}{16A} \equiv \pm 1(4).$ We make these assumptions as these are necessary conditions to study the sums of quartic exponential sums in this paper. Then the sum modulo $Q$ of \eqref{eq:qa8} equals $$\sum_{d(Q)} \lambda(d)\theta(d)(\frac{d}{A})_2e(\frac{\overline{d}B^2}{4A})=\sum_{d(Q)} \lambda(d)\theta(d)(\frac{d}{A})_2.$$ 


As we are assuming $(D,2)=1,$ by the Chinese remainder theorem this last sum equals $$\left[\sum_{d(LCM(A,D))}\theta(d)(\frac{d}{A})_2\right]\left[\sum_{a(4)}\lambda(a)\right].$$ So for a non-zero contribution we must have $\lambda\equiv 1(4).$ We state when the $d$-sum is non-zero as a proposition. Define $T_{A,D}(\theta):=\sum_{d(LCM(A,D))}\theta(d)(\frac{d}{A})_2.$ We reduce to the case that $D=p\in \ri,$ $p$ prime. So $A=p^j, j \geq 0.$ \begin{lemma}\label{ptp}
Let $T_{p^j,p}(\theta):=\sum_{d(p^j)}\theta(d)(\frac{d}{p^j})_2.$ For $A=p^j,$ \begin{equation}\label{eq:cca} T_{p^j,p}(\theta)=
\begin{cases}
\N(p)^j, & \mbox{ if } j>1,j\equiv 1(2) \mbox{ and } \theta=(\frac{\cdot}{p})_2,\\ \\
\N(p)^j, & \mbox{ if } j>1,j\equiv 0(2) \mbox{ and } \theta=1,\\ \\
\N(p), & \mbox{ if } j=1 \mbox{ and } \theta=(\frac{\cdot}{p})_2,\\ \\
\N(p), & \mbox{ if } j=0 \mbox{ and } \theta=1,\\ \\
0, & \mbox{ else.}

\end{cases}
\end{equation}

\end{lemma}
It is clear by multiplicativity, $$T_{A,D}(\theta)=\prod_{p^j || LCM(A,D)} T_{p^j,p}(\theta).$$ As we assume $A$ is a square, by the above Proposition, $T_{A,D}(\theta)=\N(A)$ if $\theta \equiv \mathbf{1}(D)$ and is zero if anything else.

Finally, we conclude for any $M \geq 0$ and $X$ large, \begin{multline}\label{eq:qaf}\sum_{\substack{c \in \ri\\ c\equiv 1(4)}} \frac{\prod_{j=1}^2\Psi(\frac{\sqrt{X}}{|c^{\eta_j}|^2})}{\N(c)}\theta(c)\sum_{x(c)} e(\frac{Ax^2+Bx+F}{c})=\\ \frac{\sqrt{X}T_{A,D}(\theta) }{\phi(4)}  \int_{\C^2} \Psi(\frac{1}{|t|^2})e(\frac{B^2}{4AX^{1/4}t})e(\frac{F}{X^{1/4}t})\frac{dt}{\sqrt{\N(t)}}+O(X^{-M})\end{multline}
with

\begin{equation}
 T_{A,D}(\theta)
 := \left\{ \begin{array}{ll} \N(A) & \text{if } \theta \equiv \mathbf{1}(D); \medskip \\
        0 & \text{if else}.\end{array} \right.  \end{equation}

\end{proof}

\section{Estimates of the ``Cross term"}\label{sec:tro}

Just as in the previous section to understand asymptotics of sums of quartic exponential sums, we need to study sums of the ``Cross terms" from \eqref{eq:csh}.   So we consider in this section \begin{equation}\label{eq:hard} \sum_{\substack{n,m \in \ri\\ (n,m)=1\\ nm\equiv 1(4)}} \frac{\theta_D(nm)\prod_{j=1}^2\Psi(\frac{\sqrt{X}}{|(nm)^{\eta_j}|^2})}{\N(nm)} \left[\sum_{x(m)}e(\frac{\overline{n}(Ax^2+Bx)}{m})\right]\left[e(\frac{-B^2\overline{8 Am}}{n})S_4(B^2\overline{m4^2A},B^2\overline{m4^2A},n)\right].\end{equation}  





\begin{theorem}\label{crrm}
 
 Let $\alpha$ be the best bound toward the Ramanujan conjecture for automorphic representations over $\F$ (the current best bound is $\alpha=\frac{7}{64}).$ Let $\epsilon > \frac{\alpha}{8}.$ For a constant $K_{\frac{DB^2}{16A},\frac{B^2}{16A}}$ depending on certain cusps $\sigma_a,\sigma_b$ and parameters $A,B,D$ defined below. We have \begin{gather*}\label{eq:harde} \sum_{\substack{n,m \in \ri\\ (n,m)=1\\ nm\equiv 1(4)}} \frac{\theta_D(nm)\prod_{j=1}^2\Psi(\frac{\sqrt{X}}{|(nm)^{\eta_j}|^2})}{\N(nm)} \left[\sum_{x(m)}e(\frac{\overline{n}(Ax^2+Bx)}{m})\right]\left[e(\frac{-B^2\overline{8 Am}}{n})S_4(-B^2\overline{m4^2A},n)\right]=\end{gather*}

$$ \left\{ \begin{array}{ll}   
X^{\frac{1}{4}}\hat{\Psi}(-\frac{1}{4})^2K_{\frac{DB^2}{16A},\frac{B^2}{16A}}+ O(X^{\epsilon}) & \text{if } \theta_D^4\equiv \mathbf{1}(D); \medskip \\
      O(X^{\epsilon}) & \text{if } \theta_D^4 \not\equiv \mathbf{1}(D).  \end{array} \right.   $$

The term $K_{\frac{DB^2}{16A},\frac{B^2}{16A}}$ is a convergent sum \begin{gather}\label{eq:wconr}  \frac{1}{g(\frac{-1}{4})^2} \sum_{\substack{m \in \ri\\ m\equiv 1(4)}} \frac{\theta_D( m)\N(m)^{1/4}}{\phi(m)} \sum_{\substack{\chi \mod (m)\\ \chi^4 \equiv 1\mod (m)}} \tau(\chi) \chi(8A(\overline{-B^2})) \times \\ \bigg[\overline{c_{\frac{4DB^2}{16A},\sigma_a}(f_{00})}c_{\frac{B^2}{16A},\sigma_b}(f_{00})d_{\frac{4DB^2}{16A},\sigma_a}(\frac{1}{4})\overline{d_{\frac{B^2}{16A},\sigma_b}(\frac{1}{4})}+\frac{\{\mbox{Remainder}\}}{X^{\frac{1}{4}} (\frac{\pi}{2})^2 \widetilde{\Psi}(\frac{1}{4})}\bigg]\end{gather} Here $f_{00,4Dm^2}$ is the quartic residual Eisenstein series of level $4Dm^2,$ and $c_{\frac{B^2}{16A},\sigma_a}(f_{00,Dm^2}), c_{\frac{B^2}{16A},\sigma_b}(f_{00,Dm^2})$ are its Fourier coefficients at cusps $\sigma_a, \sigma_b$ defined below. The $\{Remainder\}$ term is defined in \eqref{eq:ress}, it is the spectral sum of metaplectic forms excluding the residual Eisenstein series and is $o(X^{1/4}).$

\end{theorem}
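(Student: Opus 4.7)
The plan is to fix $m$, evaluate the inner $x(m)$-sum in closed form via Lemma \ref{hfm1}, Fourier-expand the remaining $\overline n\pmod m$ dependence across Dirichlet characters $\chi\pmod m$, and for each pair $(m,\chi)$ invoke the Bruggeman-Miatello-Kuznetsov trace formula at level $\Gamma_1(4)\cap\Gamma_0(4Dm^2)$ with nebentypus $\theta_D\chi(\tfrac{\cdot}{m})_2$. Whether that level carries a residual Eisenstein series---governed exactly by Theorem \ref{exi}---controls whether a main term of order $X^{1/4}$ appears.

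Concretely, under the unramified hypotheses $(nm,2ABD)=1$ forced by Theorem \ref{ftt0}, Lemma \ref{hfm1} evaluates
\begin{equation*}
\sum_{x(m)}e\bigl(\tfrac{\overline{n}(Ax^2+Bx)}{m}\bigr)=\sqrt{\N(m)}\bigl(\tfrac{A\overline n}{m}\bigr)_2\, e\bigl(\tfrac{-B^2\overline{4An}}{m}\bigr),
\end{equation*}
and orthogonality of characters modulo $m$ then expands $n\mapsto(\tfrac{\overline n}{m})_2 e(-B^2\overline{4An}/m)$ as $\frac{1}{\phi(m)}\sum_{\chi\pmod m}\chi(n)\tau(\chi)\chi(8A\overline{-B^2})$, with $\tau(\chi)$ a Gauss sum of the twisted character (this is precisely the $\tau(\chi)\chi(8A\overline{-B^2})$ prefactor visible in \eqref{eq:wconr}). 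For each fixed $(m,\chi)$ what remains is a smooth-weighted sum of metaplectic Kloosterman sums with shift parameters $r=s=-B^2\overline{m4^2A}$ and nebentypus $\theta_D\chi(\tfrac{\cdot}{m})_2$ at level $4Dm^2$; the affine factor $e(-B^2\overline{8Am}/n)$ is absorbed into the choice of representatives of the two inequivalent cusps $\sigma_a,\sigma_b$ exactly as in Section \ref{sec:fig}. Applying Theorem \ref{calk} and then Theorem \ref{al} transfers this to a spectral expansion in which only exceptional parameters generate positive powers of $X$.

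By Theorem \ref{exi}, the sole $\mu=1/4$ parameter at level $4Dm^2$ is the residual Eisenstein series $f_{00,4Dm^2}$, which exists precisely when $\bigl(\theta_D\chi(\tfrac{\cdot}{m})_2\bigr)^4\equiv\mathbf{1}$, i.e.\ when $\theta_D^4\equiv\mathbf{1}\pmod D$ \emph{and} $\chi^4\equiv\mathbf{1}\pmod m$. Every other exceptional parameter satisfies $\Re\mu\le\alpha/4$ by Theorem \ref{exi} and so contributes at most $O(X^{\alpha/8})=O(X^\epsilon)$ for $\epsilon>\alpha/8$. Collecting the residual term for each admissible $(m,\chi)$, reinstating the $\theta_D(m)/(\phi(m)\sqrt{\N(m)})$ and Gauss-sum prefactors, and matching the normalization $g(-\tfrac14)$ and $d_{r,\sigma_{a,b}}(\tfrac14)$ dictated by Theorem \ref{al} produces the main term $X^{1/4}\hat\Psi(-\tfrac14)^2K_{\frac{DB^2}{16A},\frac{B^2}{16A}}$ in the stated form; the ``Remainder'' in \eqref{eq:wconr} is exactly the pooled non-residual exceptional contribution per $(m,\chi)$, divided by $X^{1/4}\hat\Psi(-\tfrac14)$ so as to make $K$ itself a convergent constant.

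The principal technical obstacle is uniformity in the level $m$. The error term $O(\log X + X^{\epsilon-1})$ in Theorem \ref{al} carries an implicit constant depending on $m$ through the Fourier-coefficient normalizations $d_{r,\sigma_{a,b}}(\mu)$ and through the polynomial growth in the cusp data. Because $\Psi$ has compact support, only $m$ with $\N(m)\ll X^{1/2}$ contribute, so a polynomial loss in $\N(m)$ is absorbed into $X^\epsilon$ for $\epsilon>\alpha/8$. Convergence of the $m$-sum defining $K_{\frac{DB^2}{16A},\frac{B^2}{16A}}$ then follows from the $\phi(m)^{-1}$ Fourier-inversion factor combined with polynomial bounds on the residual coefficients $c_{\cdot,\sigma_{a,b}}(f_{00,4Dm^2})$, which in turn come from re-running the Hecke $L$-function residue computation of the proof of Theorem \ref{exi} at level $4Dm^2$ and locating the pole of the ratio $L(4s-4,\theta_D^4\chi^4)/L(4s-3,\theta_D^4\chi^4)$ at $s=5/4$. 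When $\theta_D^4\not\equiv\mathbf{1}\pmod D$, no $(m,\chi)$ furnishes a residual Eisenstein series, every spectral contribution is bounded by $X^{\alpha/8}$, and the whole cross term collapses to $O(X^\epsilon)$.
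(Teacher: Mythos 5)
Your overall architecture matches the paper's: fix $m$, collapse the $x(m)$-sum via Lemma~\ref{hfm1} and reciprocity, Fourier-expand the residual $n$-dependence mod $m$ over characters, identify the resulting Kloosterman-sum weight with a nebentypus at level $4Dm^2$, and invoke Theorem~\ref{calk}/Theorem~\ref{al}/Theorem~\ref{exi} so that only the residual Eisenstein series at the $(m,\chi)$ with $\chi^4\equiv\mathbf 1$ and $\theta_D^4\equiv\mathbf 1$ survive at order $X^{1/4}$. That is exactly the paper's proof, including the delicate step of choosing cusps $\sigma_a,\sigma_b$ (the paper proves this as Lemma~\ref{mea}) and noting that all non-residual exceptional parameters are bounded by $\alpha/4$ via the Shimura correspondence. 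One small inaccuracy: the factor $e(-B^2\overline{8Am}/n)$ is not ``absorbed into the choice of cusp representatives.'' In the paper it is combined with the exponentials from completing the square mod $m$ and then transported to a factor $e(-B^2\overline{8An}/m)$ by elementary reciprocity (at the cost of $1+O(X^{-1/2})$), and \emph{that} factor mod $m$ is what gets Fourier-expanded; the cusps $\sigma_a,\sigma_b$ are designed to absorb the $\overline{m^2}$ in the Kloosterman argument, not the affine exponential.

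The genuine gap is the uniformity and convergence discussion. You assert that ``polynomial bounds on the residual coefficients $c_{\cdot,\sigma_{a,b}}(f_{00,4Dm^2})$, which in turn come from re-running the Hecke $L$-function residue computation at level $4Dm^2$,'' together with the $\phi(m)^{-1}$ factor, give convergence of the $m$-sum defining $K$. This does not follow: the per-$m$ weight is $\N(m)^{1/4}\phi(m)^{-1}\tau(\chi)\approx\N(m)^{-1/4}$ up to the Fourier-coefficient factor, and $\sum_m\N(m)^{-1/4}$ diverges. To get direct convergence of the residual piece one would need a bound on $c_{r,\sigma}(f_{00,4Dm^2})$ in terms of the level analogous to the Deshouillers--Iwaniec conjectural bound $c_{r,\sigma}(f)\ll\nu(q)^{-1/2}$. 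The paper explicitly does not prove such a bound; it only asserts convergence of the full expression because it is equal, term by term, to the absolutely convergent cross-term sum it started from, and raises the direct convergence of the residual $m$-series (and the separation of the Remainder) as an open question addressed in Remark~\ref{rre} and Corollary~\ref{corw}. Your proposal closes this gap with an unproved (indeed, conjectural) level-aspect bound, and also treats the $m$-dependence of the implicit constant in Theorem~\ref{al} too casually — even with $\N(m)\ll X^{1/2}$, summing $O(\log X)$ errors over $m$ can spoil $O(X^\epsilon)$ unless one argues differently, which is precisely why the paper falls back on rearranging the a priori absolutely convergent sum rather than summing error terms uniformly.
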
 

\begin{cor}\label{corw}
On the assumption we can interchange the $m$-sum and the residual Eisenstein series from the Remainder term, we have the simplification of Theorem \ref{crrm} for any $\epsilon> \frac{\alpha}{8},$ \begin{multline}\label{eq:wcnn}    K_{\frac{DB^2}{16A},\frac{B^2}{16A}}=\\ \frac{1}{g(\frac{-1}{4})^2}  (\frac{\pi}{2})^2\sum_{\substack{m \in \ri\\ m\equiv 1(4)}} \frac{\theta_D( m)\N(m)^{1/4}}{\phi(m)} \sum_{\substack{\chi \mod (m)\\ \chi^4 \equiv 1\mod (m)}} \tau(\chi) \chi(8A(\overline{-B^2})) \times \\ \bigg[\overline{c_{\frac{4DB^2}{16A},\sigma_a}(f_{00})}c_{\frac{B^2}{16A},\sigma_b}(f_{00})d_{\frac{4DB^2}{16A},\sigma_a}(\frac{1}{4})\overline{d_{\frac{B^2}{16A},\sigma_b}(\frac{1}{4})}\bigg]+O(1),\end{multline} where $\alpha$ is the best bound towards the Ramanujan conjecture for automorphic representations over $\F.$

\end{cor}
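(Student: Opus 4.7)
The plan is to begin with the explicit expression for $K_{\frac{DB^2}{16A},\frac{B^2}{16A}}$ given in Theorem \ref{crrm} and isolate the Remainder portion. The main-term factor involving the residual Eisenstein series Fourier coefficients $c_{\cdot,\sigma_a}(f_{00})$ and $c_{\cdot,\sigma_b}(f_{00})$ is already precisely of the form appearing in Corollary \ref{corw}, so the entire content of the corollary reduces to showing that
\[
\sum_{\substack{m \in \ri\\ m\equiv 1(4)}} \frac{\theta_D(m)\N(m)^{1/4}}{\phi(m)} \sum_{\substack{\chi \mod (m)\\ \chi^4 \equiv 1\mod (m)}} \tau(\chi)\,\chi(8A(\overline{-B^2}))\cdot\frac{\{\mathrm{Remainder}_m\}}{X^{1/4}(\pi/2)^2\widetilde{\Psi}(1/4)}
\]
is $O(1)$ uniformly in $X$ once the outer prefactor $1/g(-1/4)^2$ is extracted.

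Under the interchangeability hypothesis, I would swap the $m$-sum with the spectral sum of non-residual metaplectic forms of level $4Dm^2$ that defines $\{\mathrm{Remainder}_m\}$. After this interchange, each term corresponds to a single metaplectic form $f_l$ weighted by $\prod_{j=1}^2 \widetilde{\Psi}(\mu_{l,j})X^{\mu_{l,j}}/g(-\mu_{l,j})$ coming from Theorem \ref{al}. Two inputs then take over: first, Theorem \ref{exi}(2) forces $\Re(\mu_{l,j}) \leq \alpha/4$ for any exceptional spectral parameter of a non-residual form; second, Theorem \ref{al} controls the non-exceptional part by $O(\log X)$. Together, the $X$-size of $\{\mathrm{Remainder}_m\}$ is $O(X^{\alpha/2} + \log X)$, which after division by $X^{1/4}$ becomes $O(X^{\alpha/2 - 1/4})$, hence $O(1)$ (in fact $o(1)$) since $\alpha/2 < 1/4$ for $\alpha \leq 7/64$.

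The remaining step is to bound the $m$-average of the Fourier-coefficient products uniformly in the spectral parameter. The inner $\chi$-sum has length bounded by the number of quartic Dirichlet characters modulo $m$, itself controlled by a divisor function. Combined with $|\tau(\chi)| \leq \sqrt{\N(m)}$, the weight $\N(m)^{1/4}/\phi(m)$, and Rankin--Selberg bounds on Fourier coefficients of metaplectic forms at varying levels, the resulting $m$-sum converges absolutely and produces the desired $O(1)$ contribution.

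The main obstacle is the interchangeability hypothesis itself. As $m$ grows, the level $4Dm^2$ grows quadratically, and both the Weyl-law density of forms and the ranges of their Fourier coefficients expand accordingly. Rigorously establishing absolute convergence of the double sum over $(m, l)$ that would justify the interchange is the delicate step, and is precisely the reason Corollary \ref{corw} is stated conditionally on this assumption rather than proven unconditionally.
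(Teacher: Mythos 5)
Your proposal is correct and follows the same route as the paper: accept the interchange hypothesis, separate the residual Eisenstein series contribution (which is precisely the displayed main term), and observe that the remaining spectral contribution $\{\mathrm{Remainder}\}/(X^{1/4}\cdots)$ is bounded because every non-residual exceptional parameter satisfies $\Re(\mu_{l,j}) \le \alpha/4$ by Theorem \ref{exi}(2), together with the $O(\log X)$ control on the tempered/continuous part from Theorem \ref{al}. One small calibration: in the present context the spectral side carries $X^{\mu_{l,j}/2}$ (the test function is $\Psi(\sqrt{X}/|c^{\eta_j}|^2)$, see \eqref{eq:ress}), not $X^{\mu_{l,j}}$, so the remainder is $O(X^{\alpha/4}+\log X)$ rather than $O(X^{\alpha/2}+\log X)$; after division by $X^{1/4}$ either exponent yields $o(1)$, so the final $O(1)$ conclusion is unaffected.

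Your discussion of the $m$-average (divisor bounds, Gauss sum size, level-dependence of Fourier coefficients) is a useful gloss, but be careful not to overstate it: the paper does not establish the needed level-aspect bound for metaplectic Fourier coefficients, and in the remark after Theorem \ref{crrm} it explicitly leaves this as a conjectural analogue of the Deshouillers--Iwaniec bound. You correctly identify this at the end as the reason the corollary is conditional; just make sure that sentence supersedes the earlier suggestion that Rankin--Selberg input suffices to close the argument unconditionally.
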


We give a short outline of the proof. The idea is for a fixed $m,$ to associate the $n$-sum of \eqref{eq:hard} to a spectral sum of metaplectic forms of level $4Dm^2.$ The first step is in reducing $\left[\sum_{x(m)}e(\frac{\overline{n}(Ax^2+Bx)}{m})\right]e(\frac{-B^2\overline{8 Am}}{n})$ to $\sqrt{\N(m)}e(\frac{-B^2\overline{8An}}{m}),$ an exponential sum with argument $m.$ This reduction uses elementary reciprocity and completing the square for a quadratic exponential sum. Second, as multiplicative characters are more naturally associated to sums of Kloosterman sums we write the exponential sum in terms of multiplicative characters, call such a multiplicative character $\Delta \mod (D).$ By a careful choice of cusps $\sigma_a, \sigma_b$ we can then relate the Kloosterman sum in consideration $\Delta(c) S_4(s\overline{4Dm^2},t,c)$ to a Kloosterman sum directly related to a spectral sum $S^{\sigma_a,\sigma_b}_{\kappa'\Delta}(s,t,cm),$ where $\kappa'$ is the inverse of the Kubota symbol defined in the proof. Applying the trace formula to get to the spectral side of a family of metaplectic forms we realize by using Theorem \ref{exi} that the asymptotic of the Kloosterman sums is controlled by the quartic residual Eisenstein series. It is of asymptotic size $(\frac{X}{\N(m)})^{1/4}.$ Incorporating back the $m$-sum we have our main term $ K_{\frac{DB^2}{16A},\frac{B^2}{16A}}.$ It converges as it is equal to the absolutely convergent sum \eqref{eq:hard}. 

Getting any information beyond the convergence of the entire sum, say convergence of the $m$-sum for just the residual Eisenstein series seems difficult. We need a better understanding of the Fourier coefficients of metaplectic forms dependence on the level. For example, Deshouillers and Iwaniec \cite{DeshI} conjecture certain bounds for Fourier coefficients of automorphic forms over $\Q$ in terms of the level. Namely, they conjecture for Fourier coefficients $c_{r,\sigma}(f)$ of an automorphic form $f$ with level $q,$ $$c_{r,\sigma}(f) \ll \\\frac{1}{\mu(q)^{1/2}},$$ where $\mu(q)=(w,\frac{q}{w})q^{-1}$ and $\sigma=\frac{u}{w},(u,w)=1.$ Assuming the analogous bound for metaplectic forms would give the absolute convergence of the $m$-sum for a fixed metaplectic form. So some deeper cancellation is occurring in this sum over the level of a spectral sum.

\begin{proof}

We first show that the sum   \begin{equation}\label{eq:dsfd} \sum_{\substack{n,m \in \ri\\ (n,m)=1\\ nm\equiv 1(4)}} \frac{\theta_D(nm)\prod_{j=1}^2\Psi(\frac{\sqrt{X}}{|(nm)^{\eta_j}|^2})}{\N(nm)} \left[\sum_{x(m)}e(\frac{\overline{n}(Ax^2+Bx)}{m})\right]\left[e(\frac{-B^2\overline{8 Am}}{n})S_4(-B^2\overline{m4^2A},n)\right]\end{equation} is equal to \begin{equation}\label{eq:dsfd0} \sum_{\substack{n,m \in \ri\\ (n,m)=1\\ n\equiv 1(4)\\m \equiv 1(4)}} \frac{\theta_D(nm)\prod_{j=1}^2\Psi(\frac{X}{|(nm)^{\eta_j}|^2})}{\N(nm)} \left[\sum_{x(m)}e(\frac{\overline{n}(Ax^2+Bx)}{m})\right]\left[e(\frac{-B^2\overline{8 Am}}{n})S_4(-B^2\overline{m4^2A},n)\right]\end{equation}

For a fixed $n,m$ with $nm \equiv 1(4)$ there exists a unique unit $\epsilon$ with global inverse $\overline{\epsilon}$ such that $\epsilon n \equiv 1 (4)$ and likewise $\overline{\epsilon} m \equiv 1(4).$ We can then write \eqref{eq:dsfd} as  \begin{equation}\label{eq:dsfd1} \sum_{\substack{n,m \in \ri\\ (n,m)=1\\ (\epsilon n)(\overline{\epsilon m}) \equiv 1(4)}} \frac{\theta_D(\epsilon n \overline{\epsilon}m)\prod_{j=1}^2\Psi(\frac{\sqrt{X}}{|(\epsilon n \overline{\epsilon} m)^{\sigma_j}|^2})}{\N(\epsilon n\overline{\epsilon}m)} \left[\sum_{x(m)}e(\frac{\overline{\epsilon n}(Ax^2+Bx)}{\overline{\epsilon} m})\right]\left[e(\frac{-B^2\overline{8 A\overline{\epsilon} m}}{\epsilon n})S_4(-B^2\overline{\overline{\epsilon}m4^2A},\epsilon n)\right].\end{equation} By relabeling the variables this equals \eqref{eq:dsfd0}.

By completing the square and using similar arguments to Lemma \ref{hfm} and Lemma \ref{hfm1} we have \begin{multline}\sum_{x(m)}e(\frac{\overline{n}(Ax^2+Bx)}{m})= (\frac{nA}{m})_2 e(\frac{-\overline{4nA}B^2}{m}) \sum_{x(m)}e(\frac{x^2}{m})= (\frac{n}{m})_2e(\frac{\overline{4mA}B^2}{n})e(\frac{-\overline{4A}B^2}{nm})\sum_{x(m)}e(\frac{x^2}{m})=\\ (\frac{n}{m})_2e(\frac{\overline{4mA}B^2}{n})e(\frac{-\overline{4A}B^2}{nm})\sqrt{\N(m)}.\end{multline} The last equality follows again from noting $m \equiv 1(4).$ Combining the exponentials outside the quadratic exponential sum with $e(\frac{-B^2\overline{8 Am}}{n})$ from the last bracket of \eqref{eq:hard} gives $$e(\frac{\overline{4mA}B^2}{n})e(\frac{-\overline{4A}B^2}{nm})e(\frac{-B^2\overline{8 Am}}{n})=e(\frac{B^2\overline{8 Am}}{n}) e(\frac{-\overline{4A}B^2}{nm})$$ using the fact that $(1-\overline{2})\equiv \overline{2}(n).$ Now by assumption $e(\frac{-\overline{4A}B^2}{nm})=e(\frac{\frac{-B^2}{4A}}{nm}),$ and following the argument at the beginning of Section\ref{sec:cs} $$e(\frac{\frac{-B^2}{4A}}{nm})=1+O(X^{-1/2}).$$ We ignore this term for now on (can also just relabel test function). 

By another application of elementary reciprocity, we can focus on the $n$-sum, \begin{equation}  \sum_{\substack{n \in \ri\\ (n,m)=1\\ n\equiv 1(4)\\m\equiv 1(4)}} \frac{\prod_{j=1}^2\Psi(\frac{\sqrt{X}}{|(nm)^{\eta_j}|^2}) \theta_D(n)(\frac{n}{m})_2}{\N(n)}e(\frac{-B^2\overline{8 An}}{m})S_4(-B^2\overline{m4^2A},n). \end{equation} Here we write $\chi_m$ for a multiplicative character $\chi$ modulo $m$ to show the dependence on the modulus. Then expanding an additive character as a sum of multiplicative characters  we have $$e(\frac{-B^2\overline{8 An}}{m})=\frac{1}{\phi(m)}\sum_{\chi_m} \tau(\chi_m) \overline{\chi_m}(-B^2\overline{8 An}).$$ By a change of variables we write $S_4(-B^2\overline{m4^2A},n)=(\frac{m}{n})_4 S_4(-B^2\overline{m^24^2A},-B^2\overline{4^2A},n).$


Let us consider the character $\overline{(\frac{n}{m})_4}\chi_m(n)$ as character modulo $m^2,$ writing it as $\omega_{m^2}(n):= \overline{(\frac{n}{m})_4}\chi_m(n).$ As $(4D,mn)=1$ we make another label for $\theta_D(n)\overline{(\frac{n}{m})_4}\chi_m =:  \omega_{4Dm^2}(n) \mod (4Dm^2).$

So we consider for $m \equiv 1(4)$ the sum  \begin{equation}  \sum_{\substack{n \in \ri\\ (n,m)=1,n\equiv 1(4)}} \frac{\prod_{j=1}^2\Psi(\frac{\sqrt{X}}{|(nm)^{\eta_j}|^2})\omega_{4Dm^2}(n)}{\N(n)}S_4(-B^2\overline{m^24^2A},-B^2\overline{4^2A},n). \end{equation}

\begin{lemma}\label{mea}
 Let $m \equiv 1(4),$ $\kappa'=\overline{\kappa}$ the complex conjugate of the quartic residue symbol. Define cusps $g_{\sigma_a} \cdot \infty=\begin{pmatrix}  f & l\\ 4Dm^2 & 1 \end{pmatrix} \cdot \infty$ and $ g_{\sigma_b} \cdot \infty= \begin{pmatrix} 0 & \frac{-1}{m} \\ m & 0 \end{pmatrix} \cdot \infty,$ where  $f\equiv 1(4), l\equiv 0(4)$ on $\Gamma_0(D) \cap \Gamma_1(4).$ Then the sum  \begin{gather*}  \sum_{\substack{n \in \ri\\ (n,m)=1,n\equiv 1(4)}} \frac{\prod_{j=1}^2\Psi(\frac{\sqrt{X}}{|(nm)^{\eta_j}|^2})\omega_{4Dm^2}(n)}{\N(n)}S_4(-B^2\overline{m^24^2A},-B^2\overline{4^2A},n)=\\ \N(m) \sum_{n \in {}^{\sigma_a}\mathcal{C}^{\sigma_b}} \frac{\prod_{j=1}^2\Psi(\frac{\sqrt{X}}{|(nm)^{\eta_j}|^2})}{\N(n)}S^{\sigma_a,\sigma_b}_{\kappa'\omega_{4Dm^2}}(4D \frac{B^2}{16A}, \frac{B^2}{16A},n) \end{gather*}
\end{lemma}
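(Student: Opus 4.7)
The plan is to emulate the unravelling of ${}^{\sigma'}\mathcal{C}^\sigma$ and ${}^{\sigma'}\mathcal{L}(c)^\sigma$ carried out in Section \ref{sec:fig} for the cusps $\{\infty,(D-1)/1\}$, but now for the new pair $\{\sigma_a,\sigma_b\}$. First I would take a generic $\gamma=\left(\begin{smallmatrix}a&b\\c&d\end{smallmatrix}\right)\in\Gamma_1(4)\cap\Gamma_0(D)$ and compute $g_{\sigma_a}^{-1}\gamma g_{\sigma_b}$ entry-by-entry. The requirement that all four entries are consistent with a Bruhat representative should force $m\mid a$ and $m\mid c$, which combined with $D\mid c$ and $\gamma\equiv I\pmod 4$ shows that the lower-left entry has the form $mn$ for some $n\in\ri$ with $n\equiv 1\pmod 4$ and $(n,m)=1$. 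Conversely, every such $n$ is attained exactly once per double coset, and the double cosets $\Gamma_{N^{\sigma_a}}\backslash{}^{\sigma_a}\Gamma(mn)^{\sigma_b}\slash\Gamma_{N^{\sigma_b}}$ are parametrized by $d\pmod n$, just as in Section \ref{sec:fig}. This coset analysis accounts for the $\N(m)$ on the right: the trace-formula variable is $c=mn$, so the normalization $1/\N(c)=1/(\N(m)\N(n))$ is compensated by a factor of $\N(m)$; the test function $\Psi(\sqrt{X}/|(nm)^{\eta_j}|^2)$ matches on both sides because it is already written in $nm$.

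Next I would evaluate the character $\kappa'\omega_{4Dm^2}$ on these coset representatives. Using the extension of $\kappa$ to $\Gamma_2$ together with the choices $f\equiv 1\pmod 4$, $l\equiv 0\pmod 4$, and $m\equiv 1\pmod 4$, the Kubota factor at $g_{\sigma_a}\gamma g_{\sigma_b}^{-1}$ should reduce to $\overline{\left(\tfrac{a}{mn}\right)_4}$ up to a constant that is trivial thanks to these congruences. Twisted multiplicativity of the quartic symbol and quartic reciprocity between $m$ and $n$ (both congruent to $1\pmod 4$, so no correction term appears) split this as $\overline{\left(\tfrac{a}{n}\right)_4}\cdot\overline{\left(\tfrac{a}{m}\right)_4}$. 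Since $a\equiv 0\pmod m$ from the integrality constraint above, only the appropriate part of the symbol survives, and combining with the definition $\omega_{4Dm^2}(n)=\theta_D(n)\overline{(\tfrac{n}{m})_4}\chi_m(n)$ recombines the $\pmod m$ factors into the nebentypus $\omega_{4Dm^2}(n)$, leaving $\overline{\left(\tfrac{a}{n}\right)_4}\,\omega_{4Dm^2}(n)$ in the summand. This is exactly what is needed to turn the abstract Kloosterman sum on the right into $\omega_{4Dm^2}(n)\,S_4(\cdot,\cdot,n)$ on the left.

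Third I would match the additive characters: with $r=4D\cdot\tfrac{B^2}{16A}$ and $s=\tfrac{B^2}{16A}$, reduce the relevant entries $a',d'$ of $g_{\sigma_a}^{-1}\gamma g_{\sigma_b}$ modulo $c'=mn$. The extra factor $4D$ in $r$ is designed precisely to cancel the denominator $4Dm^2$ appearing in the $(1,2)$ and $(2,2)$ entries of the conjugated matrix; the remaining factor of $m$ in the denominator cancels one $m$ in $c'=mn$, so that the exponential $e(-Tr(ra'/c'+sd'/c'))$ collapses to $e(Tr((-B^2\overline{m^2 4^2A})a/n+(-B^2\overline{4^2A})d/n))$, which is exactly the phase defining $S_4(-B^2\overline{m^2 4^2A},-B^2\overline{4^2A},n)$.

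The main obstacle is Step 2: correctly tracking the Kubota cocycle through the matrices $g_{\sigma_a}$ and $g_{\sigma_b}$, neither of which lies in $\Gamma_1(4)$. The computation must carefully invoke the extension of $\kappa$ to $\Gamma_2$, twisted multiplicativity of the quartic symbol, and quartic reciprocity between $m$ and $n$. The hypotheses $m\equiv 1\pmod 4$, $f\equiv 1\pmod 4$, and $l\equiv 0\pmod 4$ are arranged precisely so that all auxiliary reciprocity constants and cocycle evaluations at the $g_{\sigma_a}$, $g_{\sigma_b}$ are trivial, yielding the clean identity as stated.
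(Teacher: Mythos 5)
Your proposal reproduces the three-stage skeleton of the paper's argument—identify the cosets, evaluate the Kubota factor, match the additive phase—but the execution has concrete gaps at exactly the points where the proof is delicate.

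The most serious is the level. You start from a generic $\gamma\in\Gamma_0(D)\cap\Gamma_1(4)$, but the discrete group for the Kloosterman sum $S^{\sigma_a,\sigma_b}_{\kappa'\omega_{4Dm^2}}$ must be $\Gamma_0(Dm^2)\cap\Gamma_1(4)$, matching the level $4Dm^2$ of the associated metaplectic forms. Writing $\gamma=\left(\begin{smallmatrix}x&y\\w&z\end{smallmatrix}\right)$ and conjugating,
\begin{equation*}
g_{\sigma_a}^{-1}\gamma g_{\sigma_b}
=\begin{pmatrix} m(y-lz) & \frac{lw-x}{m}\\ m(fz-4Dm^2 y) & \frac{4Dm^2 x - fw}{m} \end{pmatrix},
\end{equation*}
and for the $(2,2)$ entry to be of the form $dm$ with $d\in\ri$—which is needed for the paper's parametrization $\left(\begin{smallmatrix}am & b/m\\ cm & dm\end{smallmatrix}\right)$ and the ensuing change of variables—you must have $m^2\mid fw$, and since $(f,m)=1$ this forces $m^2\mid w$. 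That constraint comes only from the level $\Gamma_0(Dm^2)$. Your claim that ``consistency with a Bruhat representative'' forces the divisibilities $m\mid a$, $m\mid c$ is not right: Bruhat decomposition lives over the field $\F$ and imposes no integrality, so the divisibility cannot be extracted from it. (The lemma's statement does say ``on $\Gamma_0(D)\cap\Gamma_1(4)$,'' which is misleading, but the proof and the surrounding discussion of $f_{00,4Dm^2}$ make clear $m^2$ must be in the level.)

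The Kubota symbol computation is also handled differently, and you concede it is the ``main obstacle'' without carrying it out. The paper does not invoke quartic reciprocity between $m$ and $n$. Instead it observes that $g_{\sigma_a}^{-1}\gamma g_{\sigma_b}\,s_0^{-1}$ factors as $\left(\begin{smallmatrix}1&-l\\-4Dm^2&f\end{smallmatrix}\right)\left(\begin{smallmatrix}-b&a\\-dm^2&c\end{smallmatrix}\right)$ with both factors in $\Gamma_0(Dm^2)\cap\Gamma_1(4)$, applies the homomorphism property of $\kappa$ on that group plus the extension to $\Gamma_2$ to absorb the $s_0$, and gets $\left(\frac{1}{-4Dm^2}\right)_4\left(\frac{a}{c}\right)_4 = \left(\frac{a}{c}\right)_4$ after noting $-4$ is a fourth power in $\ri$ and $m\equiv 1\ (4)$. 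This is an explicit cocycle computation, not a reciprocity argument between $m$ and $n$; a reciprocity-based route may exist, but you would have to track the Hilbert-symbol correction factors through $g_{\sigma_a}$ and $g_{\sigma_b}$ (neither of which is in the domain of $\kappa$), and as written you give no such accounting.

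Finally, the role of $\omega_{4Dm^2}$ is slightly misdescribed: it is not reconstructed from the Kubota symbol calculation via ``recombining the $\bmod\ m$ factors.'' It is imposed from the start as the Dirichlet nebentypus part of the decoration $\kappa'\omega_{4Dm^2}$ of the discrete group, and the computation shows it evaluates to $\omega_{4Dm^2}(c)$ on the coset representatives—exactly the factor needed to match the left-hand side. With the level corrected and the cocycle computation done explicitly, your outline would turn into a proof, but at present both of those are missing.
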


\begin{proof}

Given $\begin{pmatrix} x & y\\ w &z \end{pmatrix} \in \Gamma_0(Dm^2)\cap \Gamma_1(4)$ we have\begin{equation}\label{eq:ccal}g_{\sigma_a}^{-1} \begin{pmatrix} x & y\\ w &z \end{pmatrix} g_{\sigma_b}=\begin{pmatrix} am & \frac{b}{m}\\ cm &dm \end{pmatrix}\end{equation} with $a,b,c,d \in \ri \mbox{ and } d\equiv 0 \mod(4D), a \equiv 0(4), -b\equiv  c\equiv 1(4).$ It directly follows that this process can be reversed to get an element in $\Gamma_0(Dm^2)\cap \Gamma_1(4).$
Indeed, take an element $\begin{pmatrix} am & \frac{b}{m}\\ cm &dm \end{pmatrix}$ with $a,b,c,d \in \ri \mbox{ with } d\equiv 0 \mod(4D), a \equiv 0(4), -b \equiv c \equiv 1(4).$ Then 
\begin{multline}\begin{pmatrix}  f & l\\ 4Dm^2 & 1 \end{pmatrix}\begin{pmatrix} am & \frac{b}{m} \\ cm & dm \end{pmatrix}\begin{pmatrix} 0 & \frac{1}{m} \\ -m & 0 \end{pmatrix} =\begin{pmatrix}  f & l\\ 4Dm^2 & 1 \end{pmatrix}\begin{pmatrix} -b & a \\ -dm^2 & c \end{pmatrix} = \\ \begin{pmatrix}  -bf-ldm^2 &  f a+cl\\ -4bDm^2- dm^2 & 4aDm^2+c \end{pmatrix}.\end{multline} It easy to check this last matrix is in $\Gamma_0(Dm^2)\cap \Gamma_1(4).$

Recall the Kubota symbol is defined  by

\begin{equation*}
\kappa (\gamma) = \begin{cases} 
\left(\frac{a}{c}\right)_{\!4} & \ec{if $c\neq 0$}\\
1 &\ec{if $c=0$},
\end{cases} \qquad \ec{ where } \gamma=\begin{pmatrix}a&b\\ c&d\end{pmatrix}\in \Gamma_1(4),
\end{equation*} with $\left(\frac{a}{c}\right)_{\!4}$ the quartic power residue symbol.

Define \begin{equation*}
\kappa' (\gamma) = \begin{cases} 
\overline{\left(\frac{a}{c}\right)_{\!4}} & \ec{if $c\neq 0$}\\
1 &\ec{if $c=0$},
\end{cases} \qquad \ec{ where } \gamma=\begin{pmatrix}a&b\\ c&d\end{pmatrix}\in \Gamma_1(4),
\end{equation*} just the multiplicative inverse of $\kappa.$

 
 We can check that $$\begin{pmatrix} 1 & -l\\ -4Dm^2 & f \end{pmatrix}, \begin{pmatrix} -b & a \\ -dm^2 & c \end{pmatrix} \in \Gamma_0(Dm^2)\cap \Gamma_1(4),$$ 
  so \begin{multline}\label{eq:craz4} \kappa(\begin{pmatrix} 1 & -l\\ -4Dm^2 & f \end{pmatrix}\begin{pmatrix} -b & a \\ -dm^2 & c \end{pmatrix})=\kappa(\begin{pmatrix} 1 & -l\\ -4Dm^2 & f \end{pmatrix})\kappa(\begin{pmatrix} -b & a \\ -dm^2 & c \end{pmatrix}\begin{pmatrix} 0 & -1 \\ 1 & 0 \end{pmatrix}))=\\ (\frac{1}{-4Dm^2})_4 (\frac{a}{c})_4=(\frac{1}{D})_4 (\frac{1}{m})_2 (\frac{a}{c})_4=(\frac{a}{c})_4.\end{multline} The last equality following from $-4$ is a fourth power in $\ri,$ and the extension of $\kappa$ to $\Gamma_2.$


For a Dirichlet character we also see $$\Delta_{4Dm^2}(\begin{pmatrix}  -bf-ldm^2 &  f a+cl\\ -4bDm^2- dm^2 & 4aDm^2+c \end{pmatrix})=\Delta_{4Dm^2}( c).$$

Therefore, by the definition of a Kloosterman sum at cusps $g_{\sigma_a}, g_{\sigma_b},$ and a standard change of variables, we have $$S^{\sigma_a,\sigma_b}_{\kappa'\Delta}(s,t,cm)=\sum_{\substack{\left( \begin{array}{cc}
am &\frac{b}{m} \\ cm & dm \\ \end{array} \right) \in 
g_{\sigma_a}^{-1}{}^{\sigma_a}\mathcal{L}(cm)^{\sigma_b}g_{\sigma_b} \\ d \equiv 0(4D)\\m^2ad-bc=1}} \left(\kappa'\Delta_{4Dm^2}(g_{\sigma_{a}} \begin{pmatrix}
am & \frac{b}{m} \\ cm & dm \\ \end{pmatrix} g_{\sigma_{b}}^{-1})\right)^{-1} e(\frac{sdm}{cm}+\frac{tam}{cm})=$$

$$\Delta_{4Dm^2}(c)  \sum_{a,d \mod(c), ad\equiv 1(c)} (\frac{d}{c})_4 e(\frac{s\overline{4Dm^2}a+td}{c})=\Delta_{Dm^2}( c) S_4(s\overline{4Dm^2},t,c)$$ with $c \equiv 1(4).$ 

Let us now take $s= 4D \frac{B^2}{16A}$ and $t=\frac{B^2}{16A},$ and $\Delta_{4Dm^2}(x) = \omega_{4Dm^2}(x).$ To finally connect \eqref{eq:dsfd} to a geometric side of the trace formula in Theorem \ref{kuk}, we need to understand ${}^{\sigma_a} \mathcal{C}^{\sigma_b}.$ From \eqref{eq:ccal}, the set ${}^{\sigma_a} \mathcal{C}^{\sigma_b}=\{n\in \ri: n\neq 0, n \equiv 0 (m), n\equiv 1(4)\}.$ Hence, we can conclude for $m\equiv 1(4), $ \begin{multline}   \sum_{\substack{n \in \ri\\  n\equiv 1(4)}} 
\frac{\prod_{j=1}^2\Psi(\frac{\sqrt{X}}{|(nm)^{\eta_j}|^2})}{\N(n)}\omega_{4Dm^2}(n)S_4(-B^2\overline{m^24^2A},-
B^2\overline{4^2A},n)=\\ \N(m) \sum_{n \in {}^{\sigma_a}\mathcal{C}^{\sigma_b}} \frac{\prod_{j=1}^2\Psi(\frac{\sqrt{X}}{|(nm)^{\eta_j}|^2})}{\N(n)}S^{\sigma_a,\sigma_b}_{\kappa'\omega_{4Dm^2}}(4D \frac{B^2}{16A}, \frac{B^2}{16A},n). \end{multline}

\end{proof}

Following Theorem 7.1, there is only a residual Eisenstein series $f_{00}$ for the sum from Lemma \ref{mea} if $(\kappa'\omega_{4Dm^2})^4 \equiv 1 (4Dm^2),$ which implies $\chi_m^4 \equiv 1(m^2)$ and $\theta_D^4 \equiv 1 (D).$ Assuming this is the case  \begin{multline} \sum_{n \in {}^{\sigma_a}\mathcal{C}^{\sigma_b}} \frac{\prod_{j=1}^2\Psi(\frac{\sqrt{X}}{|(nm)^{\eta_j}|^2})}{\N(n)}S^{\sigma_a,\sigma_b}_{\kappa'\omega_{4Dm^2}}(4D \frac{B^2}{16A}, \frac{B^2}{16A},n)=\\ (\frac{\pi}{2})^2 \frac{X^{\frac{1}{4}} \widetilde{\Psi}(\frac{1}{4})^2}{\N(m)^{\frac{1}{4}}g(-\frac{1}{4})^2} \overline{c_{\frac{4DB^2}{16A},\sigma_a}(f_{00})}c_{\frac{B^2}{16A},\sigma_b}(f_{00})d_{\frac{DB^2}{16A},\sigma_a}(\frac{1}{4})\overline{d_{\frac{B^2}{16A},\sigma_b}(\frac{1}{4})}+\\  (\frac{\pi}{2})^2 \sum_{f_l \neq f_{00}}  \prod_{j=1}^2 \frac{ \widetilde{\Psi}(\mu_{l,j})}{g(-\mu_{l,j})^2}X^{\frac{\mu_{l,j}}{2}}\overline{c_{\frac{4DB^2}{16A},\sigma_a}(f_{l})}c_{\frac{B^2}{16A},\sigma_b}(f_{l})d_{\frac{4DB^2}{16A},\sigma_a}(\mu_l)\overline{d_{\frac{B^2}{16A},\sigma_b}(\mu_l)}+CSC_{\frac{4DB^2}{16A},\frac{B^2}{16A}}^{\sigma_a,\sigma_b}. \end{multline} 

Otherwise, if either $\chi_m^4 \not\equiv 1(m^2)$ or $\theta_D^4 \not\equiv 1 (D)$ then there is no residual term and we can use Theorem \ref{exi}. In this case, this gives $\Re(\mu_{l,j}) \leq \frac{\alpha}{4},$ where $\alpha$ is the best bound towards the Ramanujan conjecture for automorphic representations. Taking $\epsilon>\frac{\alpha}{8}$ the sum is bounded by $O(X^{\epsilon}).$ 

Let us label the terms \begin{equation} \label{eq:ress}\{\mbox{Remainder}\}:=\sum_{f_l \neq f_{00}}  \prod_{j=1}^2 \frac{ \widetilde{\Psi}(\mu_{l,j})}{g(-\mu_{l,j})^2}X^{\frac{\mu_{l,j}}{2}} \overline{c_{\frac{4DB^2}{16A},\sigma_a}(f_{l})}c_{\frac{B^2}{16A},\sigma_b}(f_{l})d_{\frac{4DB^2}{16A},\sigma_a}(\mu_l)\overline{d_{\frac{B^2}{16A},\sigma_b}(\mu_l)}+CSC_{\frac{4DB^2}{16A},\frac{B^2}{16A}}^{\sigma_a,\sigma_b}.\end{equation} Then by the previous paragraph $$\frac{\{\mbox{Remainder}\}}{\widetilde{\Psi}(\frac{-1}{4})^2X^{\frac{1}{4}}}=O(1).$$ 

The main term is
\begin{multline}\label{eq:wcon}   \frac{X^{\frac{1}{4}} \widetilde{\Psi}(\frac{1}{4})^2}{g(-\frac{1}{4})^2} (\frac{\pi}{2})^2\sum_{\substack{m \in \ri\\ m\equiv 1(4)}} \frac{\theta_D(m)\N(m)^{1/4}}{\phi(m)} \sum_{\substack{\chi \mod (m)\\ \chi^4 \equiv 1\mod (m)}} \tau(\chi) \chi(8A(\overline{-B^2}))  \times \\ \overline{c_{\frac{4DB^2}{16A},\sigma_a}(f_{00})}c_{\frac{B^2}{16A},\sigma_b}(f_{00})d_{\frac{4DB^2}{16A},\sigma_a}(\frac{1}{4})\overline{d_{\frac{B^2}{16A},\sigma_b}(\frac{1}{4})}+\{\mbox{Remainder}\}\end{multline} This is the main term of our Theorem. The equation \eqref{eq:wcon} converges as it is equal to the absolutely convergent sum \eqref{eq:hard}, and is clearly $O(X^{1/4}).$ However, it would be desirable to directly show that the $m$-sum is convergent. We make some remarks on this issue after completing the proof.

 We have \eqref{eq:wcon} equal to \begin{multline}\label{eq:wconv1}  \frac{X^{\frac{1}{4}}  \widetilde{\Psi}(\frac{1}{4})^2}{g(\frac{-1}{4})^2} (\frac{\pi}{2})^2  \sum_{\substack{m \in \ri\\ m\equiv 1(4)}} \frac{\theta_D( m)\N(m)^{1/4}}{\phi(m)} \sum_{\substack{\chi \mod (m)\\ \chi^4 \equiv 1\mod (m)}} \tau(\chi) \chi(8A(\overline{-B^2})) \times \\ \bigg[\overline{c_{\frac{4DB^2}{16A},\sigma_a}(f_{00})}c_{\frac{B^2}{16A},\sigma_b}(f_{00})d_{\frac{4DB^2}{16A},\sigma_a}(\frac{1}{4})\overline{d_{\frac{B^2}{16A},\sigma_b}(\frac{1}{4})}+\frac{g(-\frac{1}{4})^2\{\mbox{Remainder}\}}{X^{\frac{1}{4}}\widetilde{\Psi}(\frac{1}{4})}\bigg]=X^{\frac{1}{4}} \widetilde{\Psi}(\frac{1}{4})^2   K_{\frac{4DB^2}{16A},\frac{B^2}{16A}},\end{multline} Hence, \eqref{eq:harde} is equal to \eqref{eq:wconv1} which completes the theorem.

\end{proof}

\begin{remark}\label{rre}
\begin{itemize}

\item We recall from Theorem \ref{al} for $0< \epsilon<1$ we have \begin{multline} \{\mbox{Remainder}\}=\sum_{\substack{ \mu_l \mbox{ excep.}\\ \mu_l \neq \{\frac{1}{4},\frac{1}{4}\}}} \prod_{j=1}^2 \frac{ \widetilde{\Psi}(\mu_{l,j})}{g(-\mu_{l,j})}X^{\frac{\mu_{l,j}}{2}}  \overline{c_{\frac{DB^2}{16A},\sigma_a}(f_l)}c_{\frac{B^2}{16A},\sigma_b}(f_l)d_{\frac{DB^2}{16A},\sigma_a}(\mu_l)\overline{d_{\frac{B^2}{16A},\sigma_b}(\mu_l)} \\ +O(\log X+ X^{\epsilon-1}).\end{multline}
If we could show that we can interchange the $m$-sum along with the Residual Eisenstein series from the rest of the spectrum we could greatly simplify the main term.

The term above would simplify to a main term plus a error term depending on the Ramanujan conjecture, \begin{multline}\label{eq:hardf}\frac{X^{\frac{1}{4}} \widetilde{\Psi}(\frac{1}{4})^2}{g(\frac{-1}{4})^2}    \bigg[ \sum_{\substack{m \in \ri\\ m\equiv 1(4)}} \frac{\theta_D(m)\N(m)^{1/4}}{\phi(m)}  \times \\ \sum_{\substack{\chi \mod (m)\\ \chi^4 \equiv 1\mod (m)}} \chi(8A(\overline{-B^2})) \tau(\chi)  \overline{c_{\frac{4DB^2}{16A},\sigma_a}(f_{00})}c_{\frac{B^2}{16A},\sigma_b}(f_{00})d_{\frac{4DB^2}{16A},\sigma_a}(\frac{1}{4})\overline{d_{\frac{B^2}{16A},\sigma_b}(\frac{1}{4})}\bigg]+O(X^{\epsilon}),\end{multline} with $\epsilon > \frac{\alpha}{8}$ and $\alpha$ as defined before.

\item How do we understand the $m$-dependence of the metaplectic Fourier coefficients? In the automorphic setting, Deshouillers and Iwaniec (\cite{DeshI}, Theorem 2) prove that almost all Fourier coefficients for an automorphic form $f$ with level $q$ has the bound  $$c_{r,\sigma}(f) \ll \\\frac{1}{\nu(q)^{1/2}},$$ where $\nu(q)=q\prod_{p|q} (1+\frac{1}{p}).$ If that bound is true in this setting for our residual Eisenstein series of level $4Dm^2,$ the $m$-sum for the residual Eisenstein series-- applying trivial bounds everywhere else-- would absolutely converge. 
\end{itemize}

\end{remark}

\begin{proof}[Proof of Corollary \ref{corw}]
Just apply the first argument of the above Remark, then we have an asymptotic depending on the residual Eisenstein series plus the remainder term. The $\{Remainder\}$ then is bounded by $O(X^{\epsilon})$ using the Ramanujan conjecture.

\end{proof}

\section{Sums of quartic exponential sums}

In this section we prove our main Theorem \ref{ftt}. For clarity, we reiterate the assumptions made in the hypothesis of the theorem. 

Let $\Psi \in C^{\infty}_0(\R^{+})$ and $D \in \ri,D\equiv 1(4).$ Let $\theta$ be a Dirichlet character modulo $D$ with $A,B,F \in \Z[\omega_8].$ Assume that $B=4B'$ and $B'\parallel D^{\infty}$ with $\frac{B^2}{16A} \in \Z[\omega_8].$ Assume also that $A,B$ are squares.

 We investigate the asymptotic \begin{equation}\label{eq:aq}
\sum_{\substack{c \in \ri \\ c\equiv 1(4)\\(c,D)=1}} \frac{\prod_{j=1}^2\Psi(\frac{\sqrt{X}}{|c^{\eta_j}|^2})}{\N(c)}\theta(c)\sum_{x(c)} e(Tr(\frac{Ax^4+Bx^2+F}{c})).
\end{equation}

Using Proposition \ref{c400}, we can rewrite the sum \eqref{eq:aq} as \begin{multline}\label{eq:222}
\sum_{\substack{c \in \ri\\c\equiv 1(4)\\(c,D)=1}} \frac{\prod_{j=1}^2\Psi(\frac{\sqrt{X}}{|c^{\eta_j}|^2})}{\N(c)} \theta(c) e(\frac{F}{c})\sum_{x(c)} e(\frac{Ax^4+B^2x}{c})=\\ \sum_{\substack{c \in \ri\\c\equiv 1(4)\\(c,D)=1}} \frac{\prod_{j=1}^2\Psi(\frac{\sqrt{X}}{|c^{\eta_j}|^2})}{\N(c)} \theta(c) e(\frac{F}{c})(\frac{AB}{c})_2 e(\frac{-B^2\overline{8 A}}{c})S_4(\overline{4^2A}B^2,\overline{4^2A}B^2,c)+\\ \sum_{\substack{c \in \ri\\c\equiv 1(4)\\(c,D)=1}} \frac{\prod_{j=1}^2\Psi(\frac{\sqrt{X}}{|c^{\eta_j}|^2})}{\N(c)} \theta(c) e(\frac{F}{c}) \sum_{x(c)} e(\frac{Ax^2+Bx}{c})+\\ \{\mbox{``Cross terms" from Section \ref{sec:tro}}\}.
\end{multline} 


\subsection{First sum on RHS of \eqref{eq:222}}\label{sec:cs}

Note as $\Psi$ is compactly supported on the positive reals, $|c^{\eta_j}|^2 \gg X, j=\{1,2\}$ so for a fixed $G \in \ri,$ $$ Tr(\frac{G}{c})\ll X^{-1/2}.$$ So \begin{equation}\label{eq:egg} e( \frac{G}{c})=\sum_{j=0}^\infty \frac{(2\pi i Tr(\frac{G}{c}))^j}{j!}=1+O(X^{-1/2}).\end{equation}

As $\frac{B^2}{8A} \in \ri$ implies   $\frac{-B^2}{8A}\equiv -B^2\overline{8A}(c)$ for all $c \in \ri,$ we have by the above argument  \begin{equation} \label{eq:tamm}e(\frac{F}{c})e(\frac{-B^2\overline{8A}}{c})=e(\frac{F}{c})e(\frac{\frac{-B^2}{8A}}{c})=1+O(X^{-1/2}).\end{equation}

Therefore, we can write--using \eqref{eq:tamm} and the fact that $A,B$ are squares--the first sum of \eqref{eq:aq} as \begin{equation}\label{eq:aq01}\sum_{\substack{c \in \ri \\c\equiv 1(4)\\(c,D)=1}}  \frac{\prod_{j=1}^2\Psi(\frac{\sqrt{X}}{|c^{\eta_j}|^2})}{\N(c)} \theta(c)  S_{4}(\frac{B^2}{16A},\frac{B^2}{16A},c)+O(X^{\epsilon}),\end{equation} for any $\epsilon>0.$ \begin{remark}
 To analyze the next two sums from \eqref{eq:222}, the analogous procedure of using \eqref{eq:tamm} in removing $e(\frac{F}{c})$ is used and not mentioned again for completion of Theorem \ref{ftt}.\end{remark}

Using \eqref{eq:fsf} and Theorem \ref{calk}, for any $\epsilon>0$ we have \begin{multline}\label{eq:4q} \sum_{\substack{c \in \ri\\c\equiv 1(4)\\(c,D)=1}}  \frac{\prod_{j=1}^2\Psi(\frac{\sqrt{X}}{|c^{\eta_j}|^2})}{\N(c)} \theta(c)  S_{4}(\frac{B^2}{16A},\frac{B^2}{16A},c)=\\\sum_{\mu_l \mbox{ excep.}} \overline{c_{r,\sigma}(f_l)}c_{r',\sigma'}(f_l)d_{r,\sigma}(\mu_l)\overline{d_{r',\sigma'}(\mu_l)}\prod_{j=1}^2 \frac{ \widetilde{\Psi}(\mu_{l,j})}{g(-\mu_{l,j})}X^{\frac{\mu_{l,j}}{2}}
+O(X^{\epsilon}).\end{multline} Here the cusps $\sigma, \sigma'$ are defined in Theorem \ref{calk}.

With Theorem \ref{exi} we know there exists exceptional $\mu_l.$ Namely, there exists a residual Eisenstein series $f_{00}$ with $\mu_{00}=\frac{1}{4}$ if $\theta^4\equiv 1\mod(D),$ and further that there is no other exceptional  $\mu_l$ with $\mu_l > \frac{\alpha}{4}.$ Recall $\alpha$ is the best bound towards the Ramanujan conjecture for automorphic representations on $\F.$ 

So for any $\epsilon>\frac{\alpha}{8},$ \begin{multline}\label{eq:4q0}  \sum_{\substack{c \in \ri\\c\equiv 1(4)\\(c,D)=1}}  \frac{\prod_{j=1}^2\Psi(\frac{X}{|c^{\eta_j}|^2})}{\N(c)} \theta(c)  S_{4}(\frac{B^2}{16A},\frac{B^2}{16A},c)=\\ \frac{X^{\frac{1}{4}} \widetilde{\Psi}(\frac{1}{4})^2}{g(\frac{-1}{4})^2} (\frac{\pi}{2})^2 \overline{c_{\frac{B^2}{16A},\sigma}(f_{00})}c_{\frac{B^2}{16A},\sigma'}(f_{00})d_{\frac{B^2}{16A},\sigma}(\frac{1}{4})\overline{d_{\frac{B^2}{16A},\sigma'}(\frac{1}{4})} +O(X^{\epsilon}).\end{multline}

We recall $$g(\nu)=\prod_{j=1}^2 \frac{4(4\pi^2|(rr')_j|)^{\nu_j}\nu_j}{\Gamma(\nu_j+1)^2}$$ for a metaplectic form $f_l$ with eigenvalue parameter $\{\mu_{l,1},\mu_{l,2}\}=\{\nu_1,\nu_2\}$ and Fourier coefficient $c_{r,\sigma}(f_l).$ In this case, $$g(-\frac{1}{4})=[{(4\pi^2)^{1/4}\Gamma(3/4)^2 \N(\frac{B^2}{16A})^{1/8}}]^{-1}.$$


\subsection{Second sum on RHS of \eqref{eq:222}}

For the second sum we use Proposition \ref{q22} to get for any $M \geq 0,$ \begin{multline}\label{eq:qaf00}\sum_{\substack{c \in \ri\\ c\equiv 1(4)\\(c,D)=1}} \frac{\prod_{j=1}^2\Psi(\frac{\sqrt{X}}{|c^{\eta_j}|^2})}{\N(c)}\theta(c)\sum_{x(c)} e(Tr(\frac{Ax^2+Bx+F}{c}))=\\ \frac{X^{\frac{1}{2}}T_{A,D}(\theta) }{\phi(4)} \int_{\C^2} \Psi(\frac{1}{|t|^2})e(\frac{B^2}{4AX^{1/4}t})e(\frac{F}{X^{1/4}t})\frac{dt}{|t|}+O(X^{-M})\end{multline}  where \begin{equation}
 T_{A,D}(\theta)
 := \left\{ \begin{array}{ll} \N(A) & \text{if } \theta \equiv \mathbf{1}(D); \medskip \\
        0 & \text{if else}.\end{array} \right.  \end{equation}

\subsection{Third sum on RHS of \eqref{eq:222}}

For this term we use Theorem \ref{crrm} from Section \ref{sec:tro}: 
$$\{\mbox{``Cross terms" from Section \ref{sec:tro}}\}=$$

\begin{multline}\label{eq:harde7} \sum_{\substack{n,m \in \ri\\ (n,m)=1\\ nm\equiv 1(4)}} \frac{\theta_D(nm)\prod_{j=1}^2\Psi(\frac{\sqrt{X}}{|(nm)^{\eta_j}|^2})}{\N(nm)} \left[\sum_{x(m)}e(\frac{\overline{n}(Ax^2+Bx)}{m})\right]\left[e(\frac{-B^2\overline{8 Am}}{n})S_4(-B^2\overline{m4^2A},n)\right]=\\ X^{\frac{1}{4}} \widetilde{\Psi}(\frac{1}{4})   K_{\frac{DB^2}{16A},\frac{B^2}{16A}}.\end{multline}

\subsection{Conclusion}


After the analysis of these three terms, we conclude for any $\epsilon>\frac{\alpha}{8},$  \begin{multline}\label{eq:two}
\sum_{\substack{c \in \ri\\ c\equiv 1(4)\\(c,D)=1}} \prod_{j=1}^2 \frac{\Psi(\frac{\sqrt{X}}{|(c)^{\eta_j}|^2})}{\N(c)}\theta(c)\sum_{x(c)} e(Tr(\frac{Ax^4+Bx^2+F}{c}))=\\ \frac{X^{\frac{1}{2}}T_{A,D}(\theta) }{\phi(4)} \int_{\C^2} \Psi(\frac{1}{|t|^2})e(\frac{B^2}{4AX^{1/4}t})e(\frac{F}{X^{1/4}t})\frac{dt}{|t|}+\\   \frac{X^{\frac{1}{4}} \widetilde{\Psi}(\frac{1}{4})^2}{g(\frac{-1}{4})^2}(\frac{\pi}{2})^2 \overline{c_{\frac{B^2}{16A},\sigma}(f_{00})}c_{\frac{B^2}{16A},\sigma'}(f_{00})d_{\frac{B^2}{16A},\sigma}(\frac{1}{4})\overline{d_{\frac{B^2}{16A},\sigma'}(\frac{1}{4})}+\\ X^{\frac{1}{4}} \widetilde{\Psi}(\frac{1}{4})^2   K_{\frac{DB^2}{16A},\frac{B^2}{16A}}+O(X^{\epsilon}). \end{multline} 

Recall again that $\sigma, \sigma'$ are cusps defined in Section \ref{sec:fig}. 

Let $\hat{\psi}(s)$ denote the Mellin transform over $\R,$ $$\hat{\psi}(s)=\int_0^\infty \psi(y)y^{s-1}dy.$$

To make the result more symmetric in $\Psi,$ we can write using the same argument as in \eqref{eq:egg}, $$\int_{\C^2} \Psi(\frac{1}{|t|^2})e(\frac{B^2}{4AX^{1/4}t})e(\frac{F}{X^{1/4}t})\frac{dt}{|t|}=\int_{\C^2} \psi(\frac{1}{|t|^2})\frac{dt}{|t|}+O(X^{-1/2}).$$ Then using a change of variables to polar coordinates $$\int_{\C^2} \Psi(\frac{1}{|t|^2})\frac{dt}{|t|}=\frac{(2\pi)^3}{(2\sqrt{2})^2} [ \int_{0}^\infty \Psi(\frac{1}{y^2})dy]^2=\frac{\pi^3}{2} \hat{\Psi}(-\frac{1}{2})^2.$$

The error from using the Taylor expansion above is of size $$X^{1/2} \cdot O(X^{-1/2})=O(1)$$ and we contain it in the error $O(X^{\epsilon}).$



Recalling that $\widetilde{\Psi}(s)=\int_0^\infty \Psi(\frac{1}{t}) t^{s} \frac{dt}{t}=\hat{\Psi}(-s),$  we get \eqref{eq:two} equals \begin{multline}\label{eq:three}
\sum_{\substack{c \in \ri\\ c\equiv 1(4)\\(c,D)=1}} \frac{\prod_{j=1}^2\Psi(\frac{\sqrt{X}}{|c^{\eta_j}|^2})}{\N(c)}\theta(c)\sum_{x(c)} e(Tr(\frac{Ax^4+Bx^2+F}{c}))=\\ X^{\frac{1}{2}} \frac{\pi^3}{2\phi(4)} \hat{\Psi}(-\frac{1}{2})^2  T_{A,D}(\theta) +\\  
\frac{X^{\frac{1}{4}} \hat{\Psi}(-\frac{1}{4})^2}{g(-\frac{1}{4})^2} (\frac{\pi}{2})^2 \overline{c_{\frac{B^2}{16A},\sigma}(f_{00})}c_{\frac{B^2}{16A},\sigma'}(f_{00})d_{\frac{B^2}{16A},\sigma}(\frac{1}{4})\overline{d_{\frac{B^2}{16A},\sigma'}(\frac{1}{4})} +\\
X^{\frac{1}{4}} \hat{\Psi}(-\frac{1}{4})^2   K_{\frac{DB^2}{16A},\frac{B^2}{16A}}+O(X^{\epsilon})\end{multline} for any $\epsilon > \frac{\alpha}{8}.$ This completes Theorem \ref{ftt}.

\section{Appendix: Bruggeman-Miatello Kuznetsov trace formula over $\F=\Q[\omega_8]$ with multiple cusps}\label{sec:app}

\begin{definition}
For $\gamma>0$ and $a=(a_1,a_2)$ with $a_j>3$ define $\mathcal{H}_a(\gamma)$ as the set of functions $h=h_1 \times h_2$ such that $h_j$ is an even holomorphic function on $|\Re \nu| \leq 2\gamma$ satisfying $$h_j(\nu) \ll e^{\frac{-|\Im \nu|}{2}}(1+|\Im \nu|)^{-a_j}.$$

\end{definition}

\begin{definition}
For $r \in \ri \setminus \{0\},$ and $h \in \mathcal{H}_a(\gamma),$ define the function $K_r h$ on $G$ by $$K_rh(g):=\sqrt{\N(r)} \prod_{j=1}^2 \frac{i}{2} \int_{\Re(\nu_j)=0} h_j(\nu_j)  W_{\sigma}^{\nu_j,r}(g)    \sin \pi \nu_j d\nu_j$$ with $W_{\sigma} ^{\nu,r}(ng_{\sigma}a[y]k):=\chi_r(n)\sqrt{\N(y)}\prod_{j=1}^2 K_{\nu_j}(4\pi y_j\sqrt{|r_j|}).$ 
\end{definition}

Fix non-equivalent essential cusps $\sigma, \sigma'.$ We have the Poincare series $$P_{\sigma,\psi}^r(g)=\sum_{\Gamma_{N^{\sigma}}\setminus \Gamma} (\theta(\gamma )\kappa(\gamma ))^{-1}\chi_r(n(\gamma g)) \psi(a(\gamma g)),$$ with $\psi(a(g)):=K_rh(a(g)).$ Note it transforms by $P_{\sigma,\psi}^r(\gamma g)=\theta(\gamma)\kappa(\gamma) P^r_{\sigma,\psi}(g).$ The absolute convergence of the series, and in particular the following integrals of this variant of the Bruggeman-Miatello trace formula follow immediately from the analysis of [\cite{BM}, Section 10], and we will not mention such issues anymore.

Take a nontrivial square integrable automorphic form $f_l(g).$ Then following directly the procedure for taking the inner product of a Poincare series with an automorphic form in [\cite{BMP1},5.3] we have $$\langle P_{\sigma,\psi}^r, f_l \rangle=\frac{\pi^2}{2} \overline{c_{r,\sigma}(f_l)} d_{r,\sigma}(\mu_l) \sqrt{\N(r)}h(\mu_l).$$

We recall from [\cite{BM},\cite{BMP1}]  that $c_{r,\sigma}(f_l)$ is $r$-th Fourier coefficient of the automorphic form $f_l$ at the cusp $\sigma$ and $$d_{r,\sigma}(\nu)=\frac{1}{vol(\Gamma_{N^{\sigma}}\backslash N^{\sigma})} \frac{2^{1-\nu}(4\pi\sqrt{N(r)})^{-\nu}}{\Gamma(\nu+1)}.$$

\begin{remark}
There seems to be a discrepancy between the factor $d_{r,\sigma}(\nu)$ in \cite{BM} and \cite{BMP1} for totally real fields. But checking with \cite{IK} and \cite{L}, it should be as we defined it.
\end{remark}

\subsection{Spectral computation of the scalar product}

Using the standard spectral decomposition of an inner product of automorphic forms $\langle f,g \rangle$ with the above defined Poincare series $P_{\sigma,\psi}^r$ as well as $P_{\sigma',\psi'}^{s}$ with $\psi':=K_rh', h' \in \mathcal{H}_a(\gamma)$ we have $$\langle P_{\sigma,\psi}^r, P_{\sigma',\psi'}^{s} \rangle=( \frac{\pi^2}{2} )^2\sqrt{\N(rs)}\int_{Y}k(\nu)d\zeta_{r,s,\sigma,\sigma'}(\nu).$$

Here for an even test function $\eta$ \begin{multline}\int_Y \eta(\nu)d\zeta_{r,s,\sigma,\sigma'}(\nu)=\sum_{l \geq 1} \overline{c_{r,\sigma}(f_l)}c_{s,\sigma'}(f_l)d_{r,\sigma}(\mu_l)\overline{d_{s,\sigma'}(\mu_l)}\eta(\mu_l)+\\ \sum_{\beta \in \mathcal{P}} \sum_{\mu} \int_{\infty}^\infty \overline{D_{r,\sigma}(P^{\beta},iy\rho,i\mu)}D_{s,\sigma'}(P^{\beta},iy\rho,i\mu)d_{r,\sigma}(\mu_l)\overline{d_{s,\sigma'}(\mu_l)}\eta(iy\rho+i\mu)dy.\end{multline}

\subsection{Geometric computation of the scalar product}
Following [\cite{BMP1},(58)] we can expand the inner product as 

$$\langle P_{\sigma,\psi}^r, P_{\sigma',\psi'}^{s} \rangle=\sum_{\gamma \in \Gamma_{N^{\sigma'}}\setminus \Gamma} I(\gamma),$$ with  $$I(\gamma)=(\theta(\gamma)\kappa(\gamma))^{-1}\int_A a^{-2\rho}\psi(g_{\sigma} a) \int_{\Gamma_{N^{\sigma}}\backslash N^{\sigma}} \chi_r(n) \psi'(\gamma n g_{\sigma}a)dn da.$$

We write this as $I_1+I_2,$ 
$$I_1:=\sum_{\Gamma_{N^{\sigma'}} \setminus (\Gamma \cap g_{\sigma'}Pg_{\sigma}^{-1})} I(\gamma),$$ and $$I_2:=\sum_{\gamma \in \Gamma_{N_{\sigma'}} \setminus {}^{\sigma'}{\Gamma}^{\sigma}} I(\gamma).$$

We remind that ${}^{\sigma'}{\Gamma}^{\sigma}=\Gamma \cap g_{\sigma'}Cg_{\sigma}^{-1}$ with $C=(P \cap G_{\Q})s_0(N \cap G_{\Q})$ and
$s_0=\left( \begin{array}{cc}
0 & -1 \\ 1 & 0 \\ \end{array} \right).$

Define the measure $$\int_Y f(\nu)d\eta(\nu):=\prod_{j=1}^2 \frac{i}{2}\int_{\Re(\nu_j)=0}f_j(\nu)\sin \pi\nu_j d\nu_j.$$
For $I_1$ we write just as in \cite{BM}, $$I_1=(\frac{\pi^2}{2})^4 \sqrt{\N(rs)}\Delta_{r,s}^{\sigma,\sigma'}(k)=(\frac{\pi^2}{2})^4
\sqrt{\N(rs)} vol(\Gamma_{N^{\sigma}}\backslash N^{\sigma})\alpha(\kappa,r,\kappa',s) \int_Y k(\nu)d
\eta(\nu)$$ with $k(x)=h(x)\overline{h'}(x)$ and $\alpha(\kappa,r,\kappa',r')=
\sum_{\gamma} (\theta(\gamma)\kappa(\gamma))^{-1}\chi_{r'}(n_{\sigma}(\gamma g_{\sigma})^{-1})$ 
with the sum over $\gamma \in \Gamma_{N^{\sigma'}} \setminus (\Gamma \cap g_{\sigma'}
Pg_{\sigma}^{-1})$ for which $\chi_r(n)=\chi_{s}(\gamma n \gamma^{-1})$ for all $n \in N^{\sigma}.$ 
This is similar to the Definition $2.6.1$ in \cite{BMP1}, and equivalent to the number of $a_{\gamma}=g_{\sigma} a_{\sigma}(\gamma g_{\sigma}) g_{\sigma}^{-1}$ such that $a_{\gamma} \cdot r= s.$

Using the exact same reasoning as \cite{BMP1},  $$I_2=\sum_{{}^{\sigma'}\mathcal{L}^{\sigma}} \sum_{\delta \in \Gamma_{N^{\sigma}}} I(\gamma \delta).$$ Let $c \in {}^{\sigma'}\mathcal{C}^{\sigma}, \gamma \in {}^{\sigma'}\mathcal{L}^{\sigma}(c)$ following their Definition $2.3.1$ and Proposition $2.3.2.$ 
Write $\xi=g_{\sigma'}^{-1}\gamma g_{\sigma}=\left( \begin{array}{cc}
a(\gamma) & b(\gamma) \\ c(\gamma) & d(\gamma) \\ \end{array} \right)=n[\xi]m[\xi]a_{\xi}s_0n'[\xi]\in SL_2(\F),$ with \begin{equation}\label{eq:exp}n[\xi]m[\xi]a_{\xi}s_0n'[\xi]=\left( \begin{array}{cc}
1 & \frac{a(\gamma)}{c(\gamma)} \\ 0 & 1 \\ \end{array} \right)\left( \begin{array}{cc}
\frac{1}{c(\gamma)} & 0 \\ 0 & c(\gamma) \\ \end{array} \right)s_0\left( \begin{array}{cc}
1 & \frac{d(\gamma)}{c(\gamma)} \\ 0 & 1 \\ \end{array} \right).\end{equation}

Writing $c=c(\gamma)$ for economy, we have \begin{multline}\sum_{\delta \in \Gamma_{N^{\sigma}}} I(\gamma \delta)=(\theta(\gamma)\kappa(\gamma))^{-1} \int_A a^{-2\rho} K_rh(g_{\sigma}a)\int_{N^{\sigma}} \chi_r(n) \overline{K_{s}h'(\gamma ng_{\sigma}a)}dnda=\\ \pi \sqrt{\N(s/r)} (\theta(\gamma)\kappa(\gamma))^{-1} \overline{\chi_{s}(n'(\xi))\chi_r(n(\xi))}\int_A a^{-2\rho} K_rh(g_{\sigma}a) K_{s} h'_{1/c^2}(g_{\sigma}a)da\end{multline} with $h_t(\nu):=\sqrt{N(t)} h(\nu) \mathbf{B}(rst^2,\nu)$ and $$\mathbf{B}(t,\nu):=\prod_{j=1}^2 \frac{I_{\nu_j}(4\pi\sqrt{t_j})I_{\nu_j}(4\pi\overline{\sqrt{t_j}})-I_{-\nu_j}(4\pi\sqrt{t_j})I_{-\nu_j}(4\pi\overline{\sqrt{t_j}})}{\sin \pi \nu_j}.$$ Again, this notation is analogous to \cite{BM} as we are working over a field with only complex embeddings. 
Following $(83)$ of \cite{BMP1}, $$\int_A a^{-2\rho}K_rh(g_{\sigma}a)K_{s}h'_{1/c^2}(g_{\sigma}a)da=2\pi^3 \N(r) \int_Y h(\nu)h'_{1/c^2}(\nu)d\eta(\nu).$$ Opening the definition of $h_{1/c^2}$ and writing $k=h\overline{h'},$ \begin{multline}\label{eq:ii}\sum_{\delta \in \Gamma_{N^{\sigma}}} I(\gamma \delta)=(\frac{\pi^3}{2})^2  \sqrt{\N(sr)} (\theta(\gamma)\kappa(\gamma))^{-1} \overline{\chi_{s}(n'(\xi))\chi_r(n(\xi))} |\N(c)|^{-1} \int_Y k(\nu) \mathbf{B}(\frac{rs}{c^2},\nu)d\eta(\nu)=\\ (\frac{\pi^3}{2})^2 \sqrt{\N(sr)} (\theta(\gamma)\kappa(\gamma))^{-1} \overline{\chi_s(n'(\xi))\chi_r(n(\xi))} |\N(c)|^{-1}  Bk(\frac{rs}{c^2}).\end{multline}
The last equality uses Definition 5.7 from \cite{BM}.

Now recalling our dependence of $a,b,c,d$ on $\gamma,$ the expression \eqref{eq:exp} gives $$\overline{\chi_{s}(n'(\xi))\chi_r(n(\xi))}=\overline{e(Tr(\frac{sd(\gamma)}{c(\gamma)}+\frac{ra(\gamma)}{c(\gamma)}))}.$$ 

Let $${}^{\sigma'}\Gamma^{\sigma}(c)=\{ \gamma \in {}^{\sigma'}\Gamma^{\sigma} : g_{\sigma'}^{-1}\gamma g_{\sigma}=\left( \begin{array}{cc}
\cdot & \cdot \\ c & \cdot \\ \end{array} \right)\},$$ then Proposition $2.3.2$ from \cite{BMP1} has an analogous argument for a number field with only complex embeddings which can be expressed for a test function $f$ as

 \begin{equation}\label{eq:rew}\sum_{\gamma \in {}^{\sigma'}\mathcal{L}^{\sigma}}f(\gamma) =\sum_{c \in {}^{\sigma'}\mathcal{C}^{\sigma}} \sum_{\gamma \in 
\Gamma_{N^{\sigma'}}\setminus {}^{\sigma'}\Gamma(c)^{\sigma} / \Gamma_{N^{\sigma}}}f(\gamma).\end{equation}

Label ${}^{\sigma'}\mathcal{L}(c)^{\sigma}:=\Gamma_{N^{\sigma'}}\setminus {}^{\sigma'}\Gamma(c)^{\sigma} / \Gamma_{N^{\sigma}}.$ Using \eqref{eq:ii} and \eqref{eq:rew}, \begin{equation}
I_2=\sum_{{}^{\sigma'}\mathcal{L}^{\sigma}} \sum_{\delta \in \Gamma_{N^{\sigma}}} I(\gamma \delta)=  (\frac{\pi^3}{2})^2 \sum_{c \in {}^{\sigma'}\mathcal{C}^{\sigma}} S^{\sigma,\sigma'}_{\kappa\theta}(r,s,c)\sqrt{\N(sr)}  |\N(c)|^{-1}  Bk(\frac{rs}{c^2}) \end{equation}

where we have defined for a multiplicative character $\Delta$ on $SL_2(\ri),$ $$S^{\sigma,\sigma'}_{\Delta}(r,s,c):=\sum_{\left( \begin{array}{cc}
a &b \\ c & d \\ \end{array} \right) \in 
g_{\sigma'}^{-1}{}^{\sigma'}\mathcal{L}(c)^{\sigma}g_{\sigma}} \Delta( g_{\sigma'} \left( \begin{array}{cc}
a &b \\ c & d \\ \end{array}  \right)g_{\sigma}^{-1})^{-1} e(-Tr(\frac{sd}{c}+\frac{ra}{c})).$$

So we have expressed the inner product of two Poincare series in two different ways giving 
\begin{multline}\label{eq:bmtr}
(\frac{\pi^2}{2})^4\sqrt{\N(rs)}\bigg[\sum_{l \geq 1} \overline{c_{r,\sigma}(f_l)}c_{s,\sigma'}(f_l)d_{r,\sigma}(\mu_l)\overline{d_{s,\sigma'}(\mu_l)}k(\mu_l)+\\ \sum_{j=1}^m \sum_{\mu} \int_{\infty}^\infty \overline{D_{r,\sigma}(P^j,iy\rho,i\mu)}D_{s,\sigma'}(P^j,iy\rho,i\mu)d_{r,\sigma}(\mu_l)\overline{d_{s,\sigma'}(\mu_l)}k(iy\rho+i\mu)dy\bigg]=\\ (\frac{\pi^2}{2})^4
\sqrt{\N(rs)} vol(\Gamma_{N^{\sigma}}\backslash N^{\sigma})\alpha(\sigma,r,\sigma',s) \times \\ \int_{Y} k(\nu)\sin \pi \nu d\nu+(\frac{\pi^3}{2})^2 \sqrt{\N(sr)} \sum_{c \in {}^{\sigma'}\mathcal{C}^{\sigma}} S^{\sigma,\sigma'}_{\kappa\theta}(r,s,c) |\N(c)|^{-1}  Bk(\frac{rs}{c^2}).
\end{multline}

Dividing by $(\frac{\pi^2}{2})^4\sqrt{\N(rs)}$ we get \begin{multline}\label{eq:bmtr0} \sum_{l \geq 1} \overline{c_{r,\sigma}(f_l)}c_{s,\sigma'}(f_l)d_{r,\sigma}(\mu_l)\overline{d_{s,\sigma'}(\mu_l)}k(\mu_l)+\\ \sum_{j=1}^m \sum_{\mu} \int_{\infty}^\infty \overline{D_{r,\sigma}(P^j,iy\rho,i\mu)}D_{s,\sigma'}(P^j,iy\rho,i\mu)d_{r,\sigma}(\mu_l)\overline{d_{s,\sigma'}(\mu_l)}k(iy\rho+i\mu)dy\bigg]=\\ 
vol(\Gamma_{N^{\sigma}}\backslash N^{\sigma})\alpha(\kappa,r,\kappa',s) \int_{Y} k(\nu)\sin \pi \nu d\nu+(\frac{2}{\pi})^2  \sum_{c \in {}^{\sigma'}\mathcal{C}^{\sigma}} S^{\sigma,\sigma'}_{\kappa\theta}(r,s,c) |\N(c)|^{-1}  Bk(\frac{rs}{c^2}).
\end{multline}

This trace formula holds with the choices of $h,h' \in \mathcal{H}_a(\gamma)$ and $k=h\overline{h'}.$ Like  \cite{BM}, we give such $k$ a definition: 

\begin{definition}
For $\gamma>0$ and $a=(a_1,a_2)$ with $a_j>6$ define $\mathcal{K}_a(\gamma)$ as the set of functions $k=k_1\times k_2$ with $k_j$ an even holomorphic function on $|\Re \nu| \leq 2\gamma$ satisfying $$k_j(\nu) \ll e^{-|\Im \nu|}(1+|\Im \nu|)^{-a_j}.$$

\end{definition}

Compare this to Theorem $2.7.1$ of \cite{BMP1} and Theorem $6.1$ of \cite{BM}.

\section{Appendix 2: Exponential sum identities}

We look at the case of Theorem \ref{c4} for a higher power. 

\begin{prop}\label{cnn}
Let $p,A,B \in \Z[\omega_{2n}],$ with $p\equiv 1 (n),(AB,p)=1.$ Denote the character of order $2$ as $\eta.$ Then, we have the following identity $$\sum_{x(p)} e(\frac{Ax^{2n}+Bx^n}{p})=\sum_{\substack{\rho(n)\\ \xi^2=\rho}} \frac{\rho(B^2)\tau(\eta)\eta(-1)}{\N(p)} \sum_{a,b(p)} \xi(a)\eta(a) \xi(b) e(\frac{a+b+4ab\overline{B^2}A}{p}).$$

\end{prop}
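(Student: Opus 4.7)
The proof will adapt the argument for Theorem \ref{c4} to higher-order power-sums. The crucial simplification is the identity
$$\sum_{x(p)} e\!\left(\tfrac{Ax^{2n}+Bx^n}{p}\right) \;=\; \sum_{\xi:\xi^n=1}\; \sum_{y(p)} \xi(y)\, e\!\left(\tfrac{Ay^2+By}{p}\right),$$
valid because $p\equiv 1\pmod n$ makes the map $x\mapsto x^n$ on $(\Z/p)^*$ exactly $n$-to-one onto the $n$-th powers, so the number of $x$ with $x^n = y$ equals $\sum_{\xi:\xi^n=1}\xi(y)$ (with the $x=0$ contribution absorbed into the trivial-character piece at $y=0$). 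Thus the $2n$-th-power phase is reduced to a quadratic-in-$y$ phase twisted by an $n$-torsion character.

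For each character $\xi$ with $\xi^n=1$, I would then compute the inner sum by the same multiplicative Fourier inversion used in Theorem \ref{c4}. Taking $\sum_A \overline{\chi}(A)\big[\sum_y \xi(y)\,e(\tfrac{Ay^2+By}{p})\big]$ and successively substituting $A\mapsto A\overline{y^2}$ and $y\mapsto\overline{B}y$ gives
$$\sum_{y(p)^*}\xi(y)\,e\!\left(\tfrac{Ay^2+By}{p}\right) \;=\; \frac{\overline{\xi}(B)}{\phi(p)}\sum_{\chi(p)}\chi(\overline{B^2}A)\,\tau(\overline{\chi})\,\tau(\chi^2\xi).$$
The decisive step is the treatment of $\tau(\chi^2\xi)$: using the hypothesis $p,A,B\in\Z[\omega_{2n}]$, which supplies the full $2n$-th roots of unity, one chooses a Dirichlet character $\xi_1$ modulo $p$ with $\xi_1^2=\xi$ and writes $\tau(\chi^2\xi)=\tau((\chi\xi_1)^2)$. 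Applying the Hasse--Davenport relation \eqref{eq:hd} with $n=2$ to $\chi\xi_1$ then produces, up to an explicit factor involving $(\chi\xi_1)(4)$, the product $\tau(\chi\xi_1)\,\tau(\chi\xi_1\eta)\,\tau(\eta)\,\eta(-1)/\N(p)$, exactly in parallel with the quartic computation.

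Substituting this back, I would expand the three Gauss sums $\tau(\chi\xi_1),\tau(\chi\xi_1\eta),\tau(\overline{\chi})$ as sums over $a,b,c\in\Z/p$ and invoke orthogonality of multiplicative characters to kill the $\chi$-sum under the constraint $4Aab\equiv B^2 c\pmod p$. Eliminating $c$ yields
$$\sum_{y(p)^*}\xi(y)\,e\!\left(\tfrac{Ay^2+By}{p}\right) \;=\; \frac{\overline{\xi}(B)\,\tau(\eta)\,\eta(-1)}{\N(p)}\sum_{a,b(p)}\xi_1(a)\,(\xi_1\eta)(b)\,e\!\left(\tfrac{a+b+4ab\overline{B^2}A}{p}\right),$$
the direct analogue of \eqref{eq:cf}. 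Summing the resulting expression over all $n$-torsion characters $\xi$ and relabeling so that the proposition's $\rho$ plays the role of my $\xi$ and the proposition's $\xi$ plays the role of my $\xi_1$ (with $\xi^2=\rho$) then delivers the stated identity.

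The main obstacle I anticipate is the bookkeeping in this final relabeling: my derivation produces the prefactor $\overline{\xi}(B)$, while the statement uses $\rho(B^2)$, and matching them requires a careful accounting of how the two square roots $\xi_1$ and $\xi_1\eta$ of $\xi$ redistribute the Hasse--Davenport constant $(\chi\xi_1)(4)$ and the exponent of $B$, together with how the $\rho=1$ piece absorbs the $x=0$ term. A secondary, but structurally essential, point is the necessity of $\Z[\omega_{2n}]$ rather than merely $\Z[\omega_n]$: it is exactly this enlargement that guarantees every $n$-torsion character $\xi$ modulo $p$ has a global square root $\xi_1$, which is what makes the Hasse--Davenport reduction above valid.
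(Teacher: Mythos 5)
Your proposal is correct and follows essentially the same path as the paper's proof: decompose the $2n$-th-power sum by $n$-torsion characters (this step is implicit in the paper, explicit in yours), Fourier-invert in $A$, apply Hasse--Davenport to $\tau((\chi\xi)^2)$ with $\xi^2=\rho$, and expand and orthogonalize the Gauss sums. Your flagged bookkeeping concern is genuine and points to a slip in the paper rather than something your relabeling should reproduce: the changes of variables $A\mapsto A\overline{x^2}$, $x\mapsto\overline{B}x$ produce $\overline{\rho}(B)$, not $\rho(B^2)$, and the Hasse--Davenport constant is $(\chi\xi)(4)=\chi(4)\xi(4)$ with $\xi(4)$ generally nontrivial; in the quartic case of Theorem~\ref{c4} both issues disappear because there $\overline{\eta}(B)=\eta(B)$ and $\rho(4)=1$ (since $4=(\sqrt{2})^4$ with $\sqrt{2}\in\Z[\omega_8]$), which explains why they went unnoticed.
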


\begin{proof}
\begin{equation}
\sum_{A(p)}\overline{\chi(A)} \sum_{x(p)}  \rho(x) e(\frac{Ax^2+Bx}{p}).
\end{equation}

By a change of variables $A \to A \overline{x^2}, x \to \overline{B}x$ this equals $$\chi(\overline{B^2})\rho(B^2)\tau(\chi^2 \rho)\tau(\overline{\chi}).$$

So by Fourier inversion \begin{equation}\label{eq:fi}\sum_{x(p)}  \rho(x) e(\frac{Ax^2+Bx}{p})= \rho(B^2) \frac{1}{\phi(p)} \sum_{\chi(p)} \chi(\overline{B^2}) \tau(\chi^2 \rho)\tau(\overline{\chi})\chi(A).\end{equation}

The Hasse-Davenport relation gives the identity for $\chi(p),$ \begin{equation}\label{eq:hd}
\tau(\chi^n)=\frac{-\chi(n^n) \prod_{l(n)} \tau(\chi \gamma^{l})}{\prod_{l(n)} \tau(\gamma^l)},
\end{equation} where $\gamma$ is the $n$-th residue character.

Suppose $\rho=\xi^2,$ one can consider $\xi$ a $2n$-th residue character.

Writing the term $\tau(\chi^2 \rho)=\tau((\chi \xi)^2)=\tau((\chi \xi)^2),$ and using the Hasse-Davenport relation we can write 

$$\tau((\chi \xi)^2)=\frac{\chi(4) \tau(\chi \xi \eta)\tau(\chi \xi)\tau(\eta)\eta(-1)}{\N(p)}.$$ This equality can be reached by using the equality $\overline{\tau(\eta)}=\eta(-1)\tau(\eta).$

So \eqref{eq:fi} equals $$ \frac{\rho(B^2)\tau(\eta)\eta(-1)}{\N(p)\phi(p)} \sum_{\chi(p)} \chi(\overline{B^2}) \chi(A) \left[\chi(4) \tau(\chi \xi \eta)\tau(\chi \xi)\tau(\overline{\chi})\right].$$

Opening all the Gauss sums and rearranging sums this equals $$\frac{\eta(B)\tau(\eta)\eta(-1)}{\N(p)\phi(p)} \sum_{a(p)} \xi(a)\eta(a)e(\frac{a}{p}) \sum_{b(p)} \xi(b)e(\frac{b}{p}) \sum_{c(p)} e(\frac{c}{p})\sum_{\chi(p)} \chi(4ab\overline{cB^2}A).$$

Using orthogonality of characters this equals 
$$\frac{\rho(B^2)\tau(\eta)\eta(-1)}{\N(p)} \sum_{\substack{a,b,c(p)\\ 4abA\equiv B^2c(p)}} \xi(a)\eta(a)e(\frac{a}{p})  \xi(b)e(\frac{b}{p})  e(\frac{c}{p}).$$

We can rewrite this as $$\frac{\rho(B^2)\tau(\eta)\eta(-1)}{\N(p)} \sum_{a,b(p)} \xi(a)\eta(a) \xi(b) e(\frac{a+b+4ab\overline{B^2}A}{p}).$$

But unlike Theorem \ref{c4}, this cannot be reduced to a rank $2$ Kloosterman sum. This sum is a close relation to a hypergeometric sum found in \cite{K}. So the power $n=2$ is a special case that the exponential sums in question are related to rank $2$ Kloosterman sums. 


\end{proof}

If one had to guess what exponential sums are associated to rank $2$ Kloosterman sums, one could look at the prime power modulus, say $p^m, m>1$ for $p$ in some number field. From Proposition \ref{klo}, it is necessary that the polynomial of the exponential sum looks like $f(x)=Ax^n+Bx^{n-2}+C$ for $A,B,C$ integers in some number field. Indeed, Proposition \ref{klo} tells us that the derivative needs to satisfy some relation $F(A,B,C)x^2\equiv G(A,B,C) (p^j),$ for $1 \leq j \leq m,$ with $F(A,B,C), G(A,B,C)$ some functions of $A,B,C.$ One can imitate the same method done for Proposition \ref{cnn} above for such an exponential sum $$\sum_{x(p)} e(\frac{Ax^n+Bx^{n-2}+C}{p}),$$ and realize that is not related, at least via an analogous Hasse-Davenport argument, to a Kloosterman sum. Even for $n=4,$ it is not clear what to do, until one applies characters of order 2 to the sum and then one arrives at Theorem \ref{c4}. All of this argument is not a proof that no other exponential sums of higher degree polynomials are related to  rank $2$ Kloosterman sums, but is good evidence the degree 4 sums we study are likely the highest power related to rank $2$ Kloosterman sums and therefore for metaplectic forms of any cover of $GL_2.$ 

It is clear these sums can be related to hyper-Kloosterman sums or higher rank Kloosterman or Hypergeometric sheafs of \cite{K}, but an important question is to link a sum of such sums or sheafs to something spectral like automorphic forms via the trace formula. This seems to be a difficult question as it is still not clear whether rank $3$ Kloosterman sheafs are related to $GL_3$ automorphic forms or some subset (automorphic representations of $U(3)$?) of them.


\begin{thebibliography}{99}

\bibitem{BB}{Blomer, V., Brumley, F.} {\it On the Ramanujan conjecture over number fields. } Annals of Mathematics, 174,1,581-605.


\bibitem{BS1} {Boylan, H., Skoruppa, N.} {\it Explicit formulas for Hecke Gauss sums in quadratic number fields.} Journal of Number Theory 133 (2013) 110Ð114.

\bibitem{BS2} {Boylan, H., Skoruppa, N.} {\it A quick proof of reciprocity for Hecke Gauss sums} Abh. Math. Semin. Univ. Hambg. (2010) 80: 213Ð226.


\bibitem{BM}{Bruggeman, R.W. \& Miatello, R.J.}
{\it Sum formula for $SL_2$ over a number field and Selberg type estimate for exceptional eigenvalues.} GAFA, vol 8. (1998), 627-655.


\bibitem{BM1}{Bruggeman, R.W. \& Miatello, R.J.}  {\it Estimates of Kloosterman sums for Groups of real rank one.} Duke Math.,Vol. 80, No. 1, October 1995.


\bibitem{BMP1}{Bruggeman, R.W. \& Miatello, R.J. \&
 Pacharoni,M.I.}  {\it Estimates for Kloosterman sums for totally real number
 fields.} J. Reine Angew. Math., 535:103-164,2001.

\bibitem{BMP2}{Bruggeman, R.W. \& Miatello, R.J. \&
 Pacharoni,M.I.} {\it Density results for automorphic forms on {H}ilbert modular
              groups.} Geom. Funct. Anal.,13:681-719,2003.

\bibitem{BMo} {Bruggeman, R. W.; Motohashi, Y.} {\it Sum formula for Kloosterman sums and fourth moment of the Dedekind zeta-function over the Gaussian number field.} Funct. Approx. Comment. Math. 31 (2003), 23Ð92. 


\bibitem{CF}  Cassels,J;  Fr\"{o}hlich,A, {\it Algebraic number theory: Proceedings of an instructional conference organized by the London mathematical society (a nato advanced study institute) with the support of the international mathematical union,} London Mathematical Society, 2010.

\bibitem{DeshI} Deshouillers, J.-M.; Iwaniec, H. {\it Kloosterman sums and Fourier coefficients of cusp forms.} Invent. Math. 70 (1982/83), no. 2, 219Ð288.

\bibitem{DI}{Duke, W.; Iwaniec, H.} {\it A relation between cubic exponential and Kloosterman sums.}  A tribute to Emil Grosswald: number theory and related analysis,  255--258, Contemp. Math., 143, Amer. Math. Soc., Providence, RI, 1993. 

\bibitem{DFI} Duke,W.,Friedlander, J.B., and Iwaniec, H. {\it Equidistribution of Roots of a Quadratic Congruence to Prime Moduli.} Ann. of Math. 141 (1995), 423Ñ441.


\bibitem{F} Flicker,Y. {\it Automorphic Forms on Covering Groups of GL(2)}, lnventiones math. 57, 119- 182 (1980).

 \bibitem{GR}{Gradshteyn \& Ryzhik.}  {\it Tables of Integrals, Series, and Products.}
Sixth edition,  2000.  

\bibitem{GS} {Goldfeld, D., Sarnak, P.}, {\it Sums of Kloosterman sums}, Invent.math.71(1983),243Ð250.

\bibitem{Hec} {Hecke, E.}, {\it Lectures on the theory of algebraic numbers.}, Springer Grad. Texts in mathematics, 77; 1981. 

\bibitem{H} {Herman, P.E.}, {\it Subconvexity for the Rankin-Selberg L-function in both levels}, submitted, http://arxiv.org/abs/1303.4459.

\bibitem{HB}  Heath-Brown, D.R. {\it KummerÕs Conjecture for Cubic Gauss Sums.} Israel J. Math. 120 (2000), 97Ñ124.

\bibitem{HBP} Heath-Brown, D.R, Patterson S.J.  {\it The Distribution of Kummer Sums at Prime Arguments.} J. Reine Angew. Math. 310 (1979), 111Ñ130.

\bibitem{IK}  Iwaniec, H. and  Kowalski, E., {\it Analytic Number Theory}. American Mathematical Society Colloquium Publications, 53.  American Mathematical Society, Providence, RI, 2004.

\bibitem{K} Katz, N. {\it Exponential Sums and Differential Equations.} AMS-124.

\bibitem{KP}  Kazhdan, D. A, Patterson, S. J. {\it Metaplectic forms.}  Inst. Hautes Etudes Sci. Publ. Math., (59):35Ð142, 1984.


\bibitem{KSa} Kim,H. and Sarnak,P., {\it Refined estimates towards the Ramanujan and Selberg Conjectures,} J. Amer. Math. Soc. 16 (2003), 139-183, Appendix to H. Kim, Functoriality
for the exterior square of GL(4) and symmetric fourth of GL(2).


\bibitem{Ku} Kubota, T. {\it On Automorphic Functions and the Reciprocity Law in a Number Field}, mimeographed notes. Kyoto Univ. 1969.

\bibitem{Ku1} Kubota, T. {\it Ein arithmetischer {S}atz \"uber eine {M}atrizengruppe}, J.
  Reine Angew. Math. \textbf{222} (1966), 55--57.

\bibitem{Kuz} Kuznetsov, N.V., {\it The Petersson conjecture for cusp forms of weight zero and the Linnik conjecture.} Mat.Sb. (N.S) 111 (1980), 334-383; Math. USSR-Sb 39 (1981), 299-342. 


\bibitem{L} Lokvenec-Guleska, H. {\it Sum Formula for $SL_2$ over Imaginary Quadratic Number Fields}, http://igitur-archive.library.uu.nl/dissertations/2004-1203-105121/

\bibitem{LP} Livn\'{e}, R.; Patterson, S. J. {\it The first moment of cubic exponential sums.}  Invent. Math.  148  (2002),  no. 1, 79--116.

\bibitem{Lo} Louvel, B. {\it On the distribution of cubic exponential sums}, .to appear in Forum Mathematicum.

 \bibitem{MW} Miatello,R., Wallach,N., {\it Kuznetsov formulas for products of
groups of R-rank one}, Israel Math. Conf. Proceedings 3 (volume in
honor of I. Piatetskii-Shapiro), Part II (1990), 305-321.


\bibitem{N}Nakasuji, M. {\it Generalized Ramanujan conjecture over general imaginary quadratic fields}, Forum Mathematicum - Forum Math , pp. 1006272354--, 2010.

\bibitem{P} Patterson, S.J. {\it The asymptotic distribution of exponential sums, I} Experiment. Math. Volume 12, Issue 2 (2003), 135-153. 

\bibitem{P1} Patterson, S.J. {\it The asymptotic distribution of Kloosterman sums.} Acta Arithmetica, LXXIX.3 (1997), 205-219.

\bibitem{P2} Patterson, S.J. {\it The cubic Shimura correspondence. } Asian J. Math, International Press
Vol. 2, No. 4, pp. 957Ð982, December 1998

\bibitem{P3} Patterson, S. J.  {\it On the distribution of certain Hua sums.
Loo-Keng Hua: a great mathematician of the twentieth century.}
 Asian J. Math.  4  (2000),  no. 4, 977--985.
 
 \bibitem{P4} Patterson, S.J. {\it A cubic analogue of the theta series.}  J. Reine Angew. Math., 296. 
 
 
 \bibitem{SY} { Soundararajan, K., Young, M.},{\it The Prime Geodesic Theorem} , arXiv:1011.5486.
 
 \bibitem{V} Venkatesh, A. {\it ``Beyond endoscopy'' and special forms on GL(2).}  J. Reine Angew. Math.  577  (2004), 23--80.

 
 \bibitem{Y} Ye, Y. {\it The lifting of an exponential sum to a cyclic algebraic number field of prime degree.}  Trans. Amer. Math. Soc.  350  (1998),  no. 12, 5003--5015.


\end{thebibliography}
\end{document}